\numberwithin{equation}{section}
\newtheorem{theorem}{Theorem}[section]
\newtheorem{proposition}[theorem]{Proposition}
\newtheorem{lemma}[theorem]{Lemma}
\newtheorem{corollary}[theorem]{Corollary}
\newtheorem{theorem*}{Theorem}
\theoremstyle{definition}
\newtheorem{definition}[theorem]{Definition}
\newtheorem{example}[theorem]{Example}
\theoremstyle{remark}
\newtheorem{remark}[theorem]{Remark}
\newcommand{\Hom}{\operatorname{Hom}}
\newcommand{\Ext}{\operatorname{Ext}}
\newcommand{\E}{\mathbb{E}}
\newcommand{\op}{\operatorname{op}}
\newcommand{\ra}{\rightarrow}
\newcommand{\s}{\mathfrak{s}}
\newcommand{\C}{\mathfrak{C}}
\newcommand{\add}{\operatorname{add}}
\newcommand{\End}{\operatorname{End}}
\newcommand{\Id}{\operatorname{Id}}
\renewcommand{\Im}{\operatorname{Im}}
\newcommand{\Ker}{\operatorname{Ker}}
\newcommand{\rad}{\operatorname{rad}}
\newcommand{\CC}{\mathcal{C}}
\newcommand{\CP}{\mathcal{P}}
\newcommand{\Cco}{\overline\C}
\newcommand{\Ccu}{\underline\C}
\newcommand{\set}[1]{\left\{#1\right\}}
\newcommand{\Endc}{\End_\C}
\newcommand{\radc}{\rad_\C}
\title{ \bf Almost Split Triangles and Morphisms Determined by Objects in Extriangulated Categories
\thanks{2010 Mathematics Subject Classification: 18E30, 18E10, 16G70.}
\thanks{Keywords: extriangulated categories, almost split $\s$-triangles, right $C$-determined morphisms,
$\s$-deflations, $\s$-inflations.}
}
\author{Tiwei Zhao$^a$, \ Lingling Tan$^a$, \
Zhaoyong Huang$^{b,}$\thanks{E-mail: tiweizhao@qfnu.edu.cn (Corresponding Author), tanll@qfnu.edu.cn, huangzy@nju.edu.cn}\\
{\it \footnotesize $^a$School of Mathematical Sciences, Qufu Normal University, Qufu 273165, Shandong Province, P.R. China;}\\
{\it \footnotesize $^b$Department of Mathematics, Nanjing University, Nanjing 210093, Jiangsu Province, P.R. China}}
\date{ }
\begin{document}

\baselineskip=15pt
\maketitle

\begin{abstract}
Let $(\C,\E,\s)$ be an $\Ext$-finite, Krull-Schmidt and $k$-linear extriangulated category with $k$ a
commutative artinian ring. We define an additive subcategory $\C_r$ (respectively, $\C_l$) of $\C$
in terms of the representable functors from the stable category of $\C$ modulo $\s$-injectives
(respectively, $\s$-projectives) to $k$-modules, which
consists of all $\s$-projective (respectively, $\s$-injective) objects and objects isomorphic to direct summands of finite
direct sums of all third (respectively, first) terms of
almost split $\s$-triangles. We investigate the subcategories $\C_r$ and $\C_l$ in terms of morphisms determined
 by objects, and then give equivalent characterizations on the existence of almost split $\s$-triangles.
\end{abstract}

\pagestyle{myheadings}
\markboth{\rightline {\scriptsize   T. Zhao, L. Tan, Z. Huang}}
         {\leftline{\scriptsize  Almost Split Triangles and Morphisms  Determined by Objects in Extriangulated Categories}}

\section{Introduction} 

In algebra, geometry and topology, exact categories and triangulated categories are two fundamental structures.
 As expected, exact categories and triangulated categories are not
independent of each other. A well-known fact is that triangulated categories which at the same time are
abelian must be semisimple \cite{Mi}. Also, there are a series of ways to produce triangulated
categories from abelian ones, such as, taking the stable categories of Frobenius exact categories \cite{Hap}, or taking the
homotopy categories or derived categories of complexes over abelian categories \cite{Mi}.

On the other hand, because of the recent
development of the cluster theory, it becomes possible to produce abelian categories from triangulated ones, that is,
starting from a cluster category and taking a cluster tilting subcategory, one can get a suitable quotient category,
which turns out to be abelian \cite{KZ,Na}. In addition, exact categories and triangulated categories share properties
in many homological invariants, for example, in the aspect of the approximation theory \cite{AbNa,Liu,Na}.
The approximation theory is the main part of relative homological algebra and the representation theory of algebras,
and its starting point is to approximate arbitrary objects by a class of
suitable subcategories. In this process, the notion of cotorsion pairs \cite{Liu,LN17,Na13} provides a fruitful context,
in particular, it is closely related to many important homological structures, such as $t$-structures, co-$t$-structures,
cluster tilting subcategories, and so on. In general, to transfer the homological properties between exact categories and
triangulated categories, one needs  to specify to the case of stable categories of Frobenius exact categories, and then
lift (or descend) the associated definitions and statements, and finally adapt the proof so that it can be applied to arbitrary exact
(or triangulated) categories.

However, it is not easy to do it in general case, especially in the third step above.
To overcome the difficulty, Nakaoka and Palu \cite{NP} introduced the notion of externally triangulated categories
(extriangulated categories for short) by a careful looking what is necessary in the definition of cotorsion pairs in
exact and triangulated cases. Under this notion, exact categories with a suitable assumption and extension-closed subcategories of triangulated
categories (they may no longer be triangulated
categories in general) both are externally triangulated (\cite{NP}), and hence, in some levels, it becomes easy to give uniform statements
and proofs in the exact and triangulated settings \cite{LN17,NP,ZH,ZZ}.

The Auslander-Reiten theory, initiated in \cite{AR1,AR2}, plays a crucial role in the representation theory
of algebras and related topics, especially in the aspect of understanding
the structure of module categories of finite dimensional algebras \cite{Au95} and that of exact and triangulated categories \cite{Ch,J,JL,LZ}.
As a simultaneous generalization and enhancement of the Auslander-Reiten theory in exact categories and triangulated categories,
recently, Iyama, Nakaoka and Palu \cite{INP} investigated the Auslander-Reiten theory in extriangulated categories.
They gave two different sets of sufficient conditions in the extriangulated category so that the existence of
almost split extensions is equivalent to that of an Auslander-Reiten-Serre duality. In this paper,
as a continuation of their work, we will investigate the existence of almost split triangles in extriangulated categories.
The paper is organized as follows.

In Section 2, we give some terminologies and some preliminary results.

Let $(\C,\E,\s)$ be an $\Ext$-finite, Krull-Schmidt and $k$-linear extriangulated category with $k$ a
commutative artinian ring. In Section 3, we introduce an additive subcategory $\C_r$ (respectively, $\C_l$) of $\C$
in terms of the representable functors from the stable category of $\C$ modulo $\s$-injectives
(respectively, $\s$-projectives) to the category of $k$-modules.
For an indecomposable object $Y$ in $\C$, if $Y$ is non-$\s$-projective (respectively, non-$\s$-injective),
then $Y\in\C_r$ (respectively, $Y\in\C_l$) if and only if there exists an almost split $\s$-triangle
ending (respectively, starting) at $Y$ (Proposition \ref{prop:C_r and C_l description}).
Moreover, we get two quasi-inverse functors $\tau$ and $\tau^-$ in the stable categories of $\C_r$ and $\C_l$
(Theorem \ref{prop:quasi-inverse}), and the pair $(\tau^-,\tau)$ forms an adjoint pair (Proposition \ref{adjoint}).

In Section 4, we mainly characterize the subcategory $\C_r$ via morphisms determined by objects.
We prove that for any $C\in\C_r$ and $Y\in\C$, if $H$ is a right $\End_{\C}(C)$-submodule of $\C(C,Y)$
containing the class $\mathcal{P}(C,Y)$ of $\s$-projective morphisms, then there exists an $\s$-triangle
$$\xymatrix@C=0.5cm{K\ar[r]&X\ar[r]^\alpha&Y\ar@{-->}[r]^\eta&},$$
such that $\alpha$ is right $C$-determined, $K\in\add(\tau C)$ and $H=\Im\C(C,\alpha)$ (Theorem \ref{thm:exist}).
Under the so-called WIC condition (see Section 4.2 for the definition), we get that an $\s$-deflation is
right $C$-determined for some object $C$ if and only if its intrinsic weak kernel lies in $\C_l$ (Theorem \ref{thm:det}).
Moreover, we have the following

\begin{theorem} {\rm (Theorem \ref{thm:C})}
Under the \emph{WIC} condition, the following statements are equivalent for any
non-$\s$-projective and indecomposable object $C$ in $\C$.
\begin{enumerate}
\item[$(1)$]\label{item:thm:C:1}
$C\in\C_r$.
\item[$(2)$]\label{item:thm:C:2}
For each object $Y$ and each right $\End_{\C}(C)$-submodule $H$ of $\C(C,Y)$ satisfying $\CP(C,Y)\subseteq H$, there exists a
right $C$-determined $\s$-deflation $\alpha\colon X\to Y$ such that $H=\Im\C(C,\alpha)$.
\setcounter{enumi}{3}
\item[$(3)$]\label{item:thm:C:4}
$C$ is an intrinsic weak cokernel of some $\s$-inflation $\alpha\colon X\to Y$ which is  left $K$-determined for some object $K$.
\item[$(4)$]\label{item:thm:C:5}
There exists an almost split $\s$-triangle ending at $C$.
\item[$(5)$]\label{item:thm:C:6}
There exists a non-retraction $\s$-deflation which is right $C$-determined.
\item[$(6)$]\label{item:thm:C:7}
There exists an $\s$-deflation $\alpha\colon X\to Y$ and a morphism $f\colon C\to Y$ such that $f$ almost factors through $\alpha$.
\end{enumerate}
\end{theorem}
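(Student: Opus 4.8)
The plan is to prove the cycle $(1)\Rightarrow(2)\Rightarrow(5)\Rightarrow(6)\Rightarrow(4)\Rightarrow(1)$ and then the side equivalence $(1)\Leftrightarrow(3)$. The two ends of the cycle come for free: $(1)\Leftrightarrow(4)$ and $(4)\Rightarrow(1)$ are exactly Proposition~\ref{prop:C_r and C_l description} applied to the indecomposable non-$\s$-projective object $C$, so the content is to splice the determinacy conditions $(2),(5),(6)$ into this equivalence. For $(1)\Rightarrow(2)$ I would invoke Theorem~\ref{thm:exist} verbatim: for $C\in\C_r$ and any $Y$ and any right $\End_\C(C)$-submodule $H$ of $\C(C,Y)$ with $\CP(C,Y)\subseteq H$, it produces an $\s$-triangle $K\to X\xrightarrow{\alpha}Y$ in which $\alpha$ is right $C$-determined and $\Im\C(C,\alpha)=H$; since $\alpha$ is, by the definition of an $\s$-triangle, an $\s$-deflation, this is precisely $(2)$.

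For $(2)\Rightarrow(5)$ I would specialize $(2)$ to $Y=C$ and $H=\radc(C,C)=\rad\End_\C(C)$. Because $C$ is indecomposable in a Krull--Schmidt category, $\End_\C(C)$ is local and $H$ is its Jacobson radical, in particular a right ideal; moreover $\CP(C,C)\subseteq H$, since a morphism factoring through a $\s$-projective is a non-unit of $\End_\C(C)$ (otherwise $C$ would be $\s$-projective). Thus $(2)$ yields a right $C$-determined $\s$-deflation $\alpha\colon X\to C$ with $\Im\C(C,\alpha)=\radc(C,C)$; as $\id_C\notin\radc(C,C)$ it does not factor through $\alpha$, so $\alpha$ is not a retraction, which is $(5)$. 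The step $(5)\Rightarrow(6)$ is the sharpest use of finiteness. Given a non-retraction right $C$-determined $\s$-deflation $\alpha\colon X\to Y$, put $H=\Im\C(C,\alpha)$. Right $C$-determinacy applied to $\id_Y$ shows that $H=\C(C,Y)$ would force $\id_Y$ to factor through $\alpha$, contradicting the non-retraction hypothesis; hence $H\subsetneq\C(C,Y)$. By $\Ext$-finiteness over the artinian ring $k$, the nonzero $\End_\C(C)$-module $\C(C,Y)/H$ has finite length and so a nonzero socle; I would choose $f\in\C(C,Y)$ mapping into this socle, so that $f\notin H$ while $f\cdot\radc(C,C)\subseteq H$. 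For any non-retraction $h\colon Z\to C$ and any $z\colon C\to Z$ the composite $hz$ is a non-unit of $\End_\C(C)$, hence lies in $\radc(C,C)$, whence $fhz\in H$; right $C$-determinacy then gives that $fh$ factors through $\alpha$. Therefore $f$ almost factors through $\alpha$, which is $(6)$.

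To close the cycle, $(6)\Rightarrow(4)$ I would prove by pulling the $\s$-triangle $K\to X\xrightarrow{\alpha}Y$ back along $f\colon C\to Y$, obtaining an $\s$-triangle $K\to E\xrightarrow{\beta}C$ together with a map over $f$. Since $f$ does not factor through $\alpha$, $\beta$ is a non-retraction, and the pullback property turns ``$fh$ factors through $\alpha$ for every non-retraction $h$'' into ``every non-retraction $h\colon Z\to C$ factors through $\beta$'', i.e. $\beta$ is right almost split; as $C$ is indecomposable this $\s$-triangle is almost split, giving $(4)$. Finally, for $(1)\Leftrightarrow(3)$ I would use the dual of Theorem~\ref{thm:det} in $\C^{\op}$ (again extriangulated, with $\C_r$ and $\C_l$ interchanged): an $\s$-inflation is left $K$-determined for some $K$ if and only if its intrinsic weak cokernel lies in $\C_r$. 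Hence $(3)$ forces $C\in\C_r$, and conversely, when $C\in\C_r$ the almost split $\s$-triangle ending at $C$ exhibits $C$ as the intrinsic weak cokernel of an $\s$-inflation whose left $K$-determinacy is then guaranteed by the dual theorem.

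The main obstacle I anticipate is $(6)\Rightarrow(4)$: verifying in the extriangulated setting that the pulled-back $\s$-triangle is genuinely almost split---that right almost splitness of $\beta$ together with the indecomposability of $C$ makes the leftmost morphism left almost split, and that the ``almost factors through'' hypothesis transports correctly across the pullback. The WIC condition underlying Theorem~\ref{thm:det} is what I expect to keep the weak-kernel/cokernel bookkeeping in $(3)$ and $(5)$ well behaved, and ensuring its dual applies in $\C^{\op}$ is the technical point to check.
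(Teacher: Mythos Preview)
Your overall architecture is correct and close to the paper's; the paper proves $(1)\Leftrightarrow(2)$ via Theorem~\ref{thm:exist} and Lemma~\ref{lem:C_r}, then $(1)\Leftrightarrow(4)$ by Proposition~\ref{prop:C_r and C_l description}, $(1)\Leftrightarrow(3)$ by the dual of Proposition~\ref{prop:C_l}, and closes the loop $(4)\Rightarrow(5)\Rightarrow(6)\Rightarrow(4)$. Your $(2)\Rightarrow(5)$ is essentially the proof of Lemma~\ref{lem:C_r} specialized to indecomposable $C$, and your $(1)\Leftrightarrow(3)$ via the dual of Theorem~\ref{thm:det} together with the almost split triangle (whose first map is left minimal, so $C$ really is the intrinsic weak cokernel) is equivalent to the paper's use of the dual of Proposition~\ref{prop:C_l}.

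There is, however, a genuine gap in your $(5)\Rightarrow(6)$. You claim that $\C(C,Y)/H$ has finite length ``by $\Ext$-finiteness over the artinian ring $k$'', but $\Ext$-finiteness concerns $\E(-,-)$, not $\C(-,-)$; the paper never assumes $\Hom$-finiteness. Without it, a nonzero module over the local ring $\End_\C(C)$ need not have nonzero socle, so your choice of $f$ is not justified. The paper avoids this by a different route: since $C$ is indecomposable and $\alpha$ is a non-retraction, $C$ is a \emph{minimal} right determiner of $\alpha$ (the only proper summand is $0$, and $0$ determining $\alpha$ would force $\alpha$ to be a retraction); then the characterization quoted before Corollary~\ref{cor:min.det} (argued as in \cite[Proposition~XI.2.4, Lemma~XI.2.1]{Au95}) produces a morphism $C\to Y$ that almost factors through $\alpha$.

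Your anticipated obstacle in $(6)\Rightarrow(4)$ is real, and the paper's fix is precisely what you should do. The pullback argument you sketch is Proposition~\ref{prop:ras}: it yields a right almost split $\s$-deflation $\beta\colon E\to C$, but right almost splitness plus indecomposability of $C$ does \emph{not} by itself make the triangle almost split. In Lemma~\ref{lem:exist.ass} the paper replaces $\beta$ by its right minimal version $r\colon E\to C$ (available by Krull--Schmidt) and then verifies (AS1) directly: for any non-section $f\colon K\to K'$, the pushed-out deflation is a non-retraction to $C$, hence factors through $r$, and right minimality of $r$ forces an isomorphism on the kernel side, contradicting $f$ non-section. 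So the ``left almost split'' part comes only after this minimalization step, not from the bare pullback.
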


In Section 5, we give some examples to illustrate the subcategories $\C_r$ and $\C_l$.

\section{Preliminaries}

Throughout  $\C$ is an additive category and $\E:\C^{\operatorname{op}}\times\C\rightarrow \mathfrak{A}b$
is a biadditive functor, where $\mathfrak{A}b$ is the category of abelian groups.

\subsection{$\E$-extensions}

\begin{definition}
(\cite[Definitions 2.1 and 2.5]{NP}) For any $A,C\in\C$, there exists a corresponding abelian group $\E(C,A)$.
\begin{enumerate}
\item[(1)] An element $\delta\in\E(C,A)$ is called an \emph{$\E$-extension}. More formally, an $\E$-extension is a triple $(A,\delta,C)$.
\item[(2)]The zero element $0$ in $\E(C,A)$ is called the \emph{split $\E$-extension}.
\end{enumerate}
\end{definition}

Let $a\in \C(A,A')$ and $c\in\C(C',C)$. Then we have the following commutative diagram
$$\xymatrix@C=0.5cm{
  \E(C,A) \ar[rr]^{\E(C,a)}\ar[d]^{\E(c,A)}\ar[rrd]^{\E(c,a)} && \E(C,A')\ar[d]^{\E(c,A')} \\
 \E(C',A)\ar[rr]^{\E(C',a)} &&\E(C',A') }
$$  in $\mathfrak{A}b$.
For an $\E$-extension $(A,\delta,C)$, we briefly write $a_\star\delta:=\E(C,a)(\delta)$ and
$c^\star\delta:=\E(c,A)(\delta)$. Then
$$\E(c,a)(\delta)=c^\star a_\star \delta= a_\star c^\star \delta.$$

\begin{definition}
(\cite[Definition 2.3]{NP})
Given two $\E$-extensions $(A,\delta,C)$ and $(A',\delta',C')$. A \emph{morphism} from $\delta$ to $\delta'$
is a pair $(a,c)$ of morphisms, where $a\in \C(A,A')$ and $c\in \C(C,C')$, such that $a_\star\delta=c^\star\delta$.
In this case, we write $(a,c):\delta\rightarrow \delta'.$
\end{definition}

Now let $A,C\in\C$. Two sequences of morphisms
$$
\xymatrix@C=0.5cm{
  A \ar[r]^x & B \ar[r]^y & C }
  \mbox{ and }
  \xymatrix@C=0.5cm{
  A \ar[r]^{x'} & B' \ar[r]^{y'} & C}
$$
are said to be \emph{equivalent} if there exists an isomorphism $b\in\C(B,B')$ such that the following diagram
$$
\xymatrix{
   A \ar[r]^x\ar@{=}[d] & B \ar[r]^y\ar[d]^b_\cong & C\ar@{=}[d]\\
    A \ar[r]^{x'} & B' \ar[r]^{y'} & C }
$$
commutes.
We denote by $[\xymatrix@C=0.5cm{
A \ar[r]^x & B \ar[r]^y & C }]$ the equivalence class of $\xymatrix@C=0.5cm{
A \ar[r]^x & B \ar[r]^y & C}$. In particular, we write $0:=[\xymatrix@C=0.5cm{
A \ar[r]^{\!\!\!\!\!\!\!{1\choose 0}} & A\oplus C \ar[r]^{~~~(0 \ 1)} & C }]$.

Note that, for any pair $\delta\in \E(C,A)$ and $\delta'\in \E(C',A')$, since $\E$ is biadditive,
there exists a natural isomorphism
$$\E(C\oplus C',A\oplus A')\cong \E(C,A)\oplus\E(C,A')\oplus\E(C',A)\oplus\E(C',A').$$
We define the symbol $\delta \oplus\delta'$ to be the element in $\E(C\oplus C',A\oplus A')$ corresponding to the element
$(\delta,0,0,\delta')$ in $\E(C,A)\oplus\E(C,A')\oplus\E(C',A)\oplus\E(C',A')$ through the above isomorphism.

\begin{definition}
 (\cite[Definition 2.9]{NP})
Let $\s$ be a correspondence which associates an equivalence class $\s(\delta)=[\xymatrix@C=0.5cm{
A \ar[r]^x & B \ar[r]^y & C }]$ to each $\E$-extension $\delta\in\E(C,A)$. The $\s$ is called a
\emph{realization} of $\E$ provided that it satisfies the following condition.
\begin{enumerate}
\item[(R)]  Let $\delta\in\E(C,A)$ and $\delta'\in\E(C',A')$ be any pair of $\E$-extensions with
$$\s(\delta)=[\xymatrix@C=0.5cm{
A \ar[r]^x & B \ar[r]^y & C }]
\mbox{ and }
\s(\delta')=[ \xymatrix@C=0.5cm{
A' \ar[r]^{x'} & B' \ar[r]^{y'} & C' }].$$
Then for any morphism $(a,c):\delta\rightarrow\delta'$, there exists $b\in\C(B,B')$ such that the following diagram
$$\xymatrix{
A \ar[r]^x\ar[d]^a & B \ar[r]^y\ar[d]^b & C\ar[d]^c\\
A' \ar[r]^{x'} & B' \ar[r]^{y'} & C' }$$
commutes.
\end{enumerate}
\end{definition}

Let $\s$ be a realization of $\E$. If $\s(\delta)=[\xymatrix@C=0.5cm{
A \ar[r]^x & B \ar[r]^y & C }]$ for some $\E$-extension $\delta\in\E(C,A)$, then we say that the sequence
$\xymatrix@C=0.5cm{A \ar[r]^x & B \ar[r]^y & C }$ \emph{realizes} $\delta$; and in the condition (R),
we say that the triple $(a,b,c)$ \emph{realizes} the morphism $(a,c)$.

For any two equivalence classes $[\xymatrix@C=0.5cm{
  A \ar[r]^x & B \ar[r]^y & C }]$ and $[\xymatrix@C=0.5cm{
  A' \ar[r]^{x'} & B' \ar[r]^{y'} & C' }]$, we define
$$[\xymatrix@C=0.5cm{
  A \ar[r]^x & B \ar[r]^y & C }]\oplus[\xymatrix@C=0.5cm{
  A' \ar[r]^{x'} & B' \ar[r]^{y'} & C' }]:=[\xymatrix@C=0.5cm{
  A\oplus A' \ar[r]^{x\oplus x'} & B\oplus B' \ar[r]^{y\oplus y'} & C\oplus C' }].$$

\begin{definition}
(\cite[Definition 2.10]{NP})
A realization $\s$ of $\E$ is called \emph{additive} if it satisfies the following conditions.
\begin{enumerate}
\item[(1)] For any $A,C\in\C$, the split $\E$-extension $0\in\E(C,A)$ satisfies $\s(0)=0$.
\item[(2)] For any pair of $\E$-extensions $\delta\in\E(C,A)$ and $\delta'\in\E(C',A')$,
we have $\s(\delta\oplus\delta')=\s(\delta)\oplus\s(\delta')$.
\end{enumerate}
\end{definition}

Let $\s$ be an additive realization of $\E$. By \cite[Remark 2.11]{NP},  if the sequence
$\xymatrix@C=0.5cm{A \ar[r]^x & B \ar[r]^y & C }$ realizes $0$ in $\E(C,A)$, then $x$ is a section and $y$ is a retraction.

\subsection{Externally triangulated categories}

\begin{definition}
(\cite[Definition 2.12]{NP}) Let $\C$ be an additive category. The triple $(\C,\E,\s)$ is called an \emph{externally triangulated}
(or \emph{extriangulated} for short) category if it satisfies the following conditions.
 \begin{enumerate}
\item[(ET1)] $\E:\C^{\operatorname{op}}\times\C\rightarrow \mathfrak{A}b$
is a biadditive functor.
\item[(ET2)] $\s$ is an additive realization of $\E$.
\item[(ET3)] Let $\delta\in\E(C,A)$ and $\delta'\in\E(C',A')$ be any pair of $\E$-extensions with
$$
\s(\delta)=[\xymatrix@C=0.5cm{
  A \ar[r]^x & B \ar[r]^y & C }]
  \mbox{ and }
 \s(\delta')=[ \xymatrix@C=0.5cm{
  A' \ar[r]^{x'} & B' \ar[r]^{y'} & C' }].
$$
For any commutative diagram
$$
\xymatrix{
   A \ar[r]^x\ar[d]^a & B \ar[r]^y\ar[d]^b & C\\
    A' \ar[r]^{x'} & B' \ar[r]^{y'} & C' }
$$
in $\C$, there exists a morphism $(a,c):\delta\ra \delta'$ which is realized by the triple $(a,b,c)$.
\item[${\rm (ET3)^{op}}$]  Dual of (ET3).
\item[(ET4)]   Let $\delta\in\E(C,A)$ and $\rho\in\E(F,B)$ be any pair of $\E$-extensions with
$$
\s(\delta)=[\xymatrix@C=0.5cm{
  A \ar[r]^x & B \ar[r]^y & C }]
  \mbox{ and }
 \s(\rho)=[ \xymatrix@C=0.5cm{
  B \ar[r]^{u} & D \ar[r]^{v} & F }].
$$
Then there exist an object $E\in\C$, an $\E$-extension $\xi$ with $\s(\xi)=[\xymatrix@C=0.5cm{
  A \ar[r]^z & D \ar[r]^w & E }]$, and a commutative diagram
$$
\xymatrix{
   A \ar[r]^x\ar@{=}[d] & B \ar[r]^y\ar[d]^u & C\ar[d]^s\\
    A \ar[r]^{z} & D \ar[r]^{w}\ar[d]^v &E\ar[d]^t\\
    &F\ar@{=}[r] &F }
$$
in $\C$, which satisfy the following compatibilities.
 \begin{enumerate}
\item[(i)] $\s(y_\star\rho)=[\xymatrix@C=0.5cm{
  C \ar[r]^s & E \ar[r]^t & F }].$
\item[(ii)] $s^\star\xi=\delta$.
\item[(iii)] $x_\star\xi=t^\star\rho$.
\end{enumerate}
\item[${\rm (ET4)^{op}}$]   Dual of (ET4).
\end{enumerate}
\end{definition}

For examples of extriangulated categories, see \cite[Remark 3.3]{HZZ}, \cite[Example 2.13]{NP},
 \cite[Example 2.8]{ZH} and \cite[Corollary 4.12 and Remark 4.13]{ZZ}.

\begin{definition}
(\cite[Definition 1.16]{INP}) Let $(\C,\E,\s)$ be a triple satisfying (ET1) and (ET2).
\begin{enumerate}
\item[(1)] If a sequence $\xymatrix@C=0.5cm{
A \ar[r]^x & B \ar[r]^y & C }$ realizes an $\E$-extension $\delta\in\E(C,A)$, then the pair
\linebreak
$(\xymatrix@C=0.5cm{
A \ar[r]^x & B \ar[r]^y & C },\delta)$ is called an \emph{$\s$-triangle}, and write it in the following way
  $$
  \xymatrix@C=0.5cm{
  A \ar[r]^x & B \ar[r]^y & C\ar@{-->}[r]^\delta&. }
  $$
  In this case, $x$ is called an \emph{$\s$-inflation}, and $y$ is called an \emph{$\s$-deflation}.
\item[(2)] Let $
  \xymatrix@C=0.5cm{
  A \ar[r]^x & B \ar[r]^y & C\ar@{-->}[r]^\delta& }
  $ and $
  \xymatrix@C=0.5cm{
  A' \ar[r]^{x'} & B' \ar[r]^{y'} & C'\ar@{-->}[r]^{\delta'}& }
$ be any pair of $\s$-triangles. If a triple $(a,b,c)$ realizes $(a,c):\delta\ra\delta'$ as in the condition (R),
then we write it as
  $$
\xymatrix{
   A \ar[r]^x\ar[d]^a & B \ar[r]^y\ar[d]^b & C\ar[d]^c\ar@{-->}[r]^\delta& \\
    A' \ar[r]^{x'} & B' \ar[r]^{y'} & C' \ar@{-->}[r]^{\delta'}&, }
$$
and call the triple $(a,b,c)$ a \emph{morphism of  $\s$-triangles}.
\end{enumerate}
\end{definition}

\begin{remark}
Let $(\C,\E,\s)$ be a triple satisfying (ET1) and (ET2), and let
$\xymatrix@C=0.5cm{
  A \ar[r]^x & B \ar[r]^y & C\ar@{-->}[r]^\delta& }
  $ be an $\s$-triangle.
\begin{enumerate}
\item[(1)] For any $a\in\C(A,A')$,  there exists a morphism of  $\s$-triangles
$$
\xymatrix{
   A \ar[r]^x\ar[d]^a & B \ar[r]^y\ar[d] & C\ar@{=}[d]\ar@{-->}[r]^\delta& \\
    A' \ar[r]^{x'} & B' \ar[r]^{y'} & C \ar@{-->}[r]^{a_\star\delta}&. }
$$
\item[(2)]  For any $c\in\C(C',C)$,  there exists a morphism of  $\s$-triangles
$$
\xymatrix{
   A \ar[r]^{x'}\ar@{=}[d] & B' \ar[r]^{y'}\ar[d] & C'\ar[d]^c\ar@{-->}[r]^{c^\star\delta}& \\
    A \ar[r]^{x} & B \ar[r]^{y} & C \ar@{-->}[r]^{\delta}&.}
$$
\end{enumerate}
\end{remark}

The following lemma is used frequently in this paper.

\begin{lemma}\label{factor} {\rm (\cite[Corollary 3.5]{NP})}
Assume that $(\C,\E,\s)$ satisfies  (ET1),(ET2),(ET3),(ET3)$^{\mathrm{op}}$. Let
$$
\xymatrix{
   A \ar[r]^x\ar[d]^a & B \ar[r]^y\ar[d]^b & C\ar[d]^c\ar@{-->}[r]^\delta& \\
    A' \ar[r]^{x'} & B' \ar[r]^{y'} & C' \ar@{-->}[r]^{\delta'}&, }
$$
be any morphism of $\s$-triangles. Then the following statements are equivalent.
\begin{itemize}
\item[$(1)$] $a$ factors through $x$.
\item[$(2)$] $a_{\star}\delta=c^{\star}\delta^{\prime}=0$.
\item[$(3)$] $c$ factors through $y^{\prime}$.
\end{itemize}
In particular, in the case $\delta=\delta^{\prime}$ and $(a,b,c)=(\Id_A,\Id_B,\Id_C)$, we have
\[ x\ \text{is a section}\ \Leftrightarrow\ \delta\ \text{is split}\ \Leftrightarrow\ y\ \text{is a retraction}. \]
\end{lemma}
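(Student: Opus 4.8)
\section*{Proof Proposal for Theorem~\ref{thm:C}}

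The plan is to prove the six statements equivalent by establishing a cycle of implications together with a few direct biconditionals, leveraging the earlier results cited in the introduction: Proposition~\ref{prop:C_r and C_l description} (which links membership in $\C_r$ to the existence of almost split $\s$-triangles), Theorem~\ref{thm:exist} (the existence of right $C$-determined $\s$-triangles realizing a prescribed submodule $H$), and Theorem~\ref{thm:det} (characterizing right $C$-determined $\s$-deflations via intrinsic weak kernels under WIC). Throughout, $C$ is indecomposable and non-$\s$-projective, so $\End_\C(C)$ is local and $\C(C,-)$ detects the relevant approximation data. I would organize the argument around the equivalence $(1)\Leftrightarrow(4)$ as the backbone, since Proposition~\ref{prop:C_r and C_l description} already gives, for an indecomposable non-$\s$-projective object, that $C\in\C_r$ if and only if there is an almost split $\s$-triangle ending at $C$; this disposes of $(1)\Leftrightarrow(4)$ immediately.

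For the implications involving morphisms determined by objects, I would proceed as follows. To get $(1)\Rightarrow(2)$, assume $C\in\C_r$ and apply Theorem~\ref{thm:exist} to the given $Y$ and $H$: this produces an $\s$-triangle $\xymatrix@C=0.5cm{K\ar[r]&X\ar[r]^\alpha&Y\ar@{-->}[r]^\eta&}$ with $\alpha$ right $C$-determined, $K\in\add(\tau C)$, and $H=\Im\C(C,\alpha)$; the remaining task is to check that $\alpha$ is an $\s$-deflation, which is built into the conclusion of Theorem~\ref{thm:exist}. The reverse direction $(2)\Rightarrow(5)$ is a specialization: choosing $Y=C$ and an appropriate proper submodule $H$ (for instance $H=\radc(C,C)+\CP(C,C)$, which is proper because $\End_\C(C)$ is local and $C$ is non-$\s$-projective) yields a right $C$-determined $\s$-deflation that cannot be a retraction, since a retraction would force $\Im\C(C,\alpha)=\C(C,C)\neq H$. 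For $(5)\Rightarrow(1)$, I would use Theorem~\ref{thm:det} to pass from the non-retraction right $C$-determined $\s$-deflation to its intrinsic weak kernel in $\C_l$, then translate this back through the $\tau$/$\tau^-$ correspondence of Theorem~\ref{prop:quasi-inverse} to conclude $C\in\C_r$; this is the step where the duality between $\C_r$ and $\C_l$ and the adjoint pair $(\tau^-,\tau)$ of Proposition~\ref{adjoint} must be invoked carefully.

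The statement $(3)$, phrased dually in terms of an intrinsic weak cokernel of a left $K$-determined $\s$-inflation, I would handle by dualizing Theorem~\ref{thm:det}: applying the opposite-category version of the intrinsic weak kernel characterization (valid since the WIC condition and the extriangulated axioms are self-dual) gives $(3)\Leftrightarrow(5)$, or alternatively $(3)\Leftrightarrow(4)$ by relating the almost split $\s$-triangle ending at $C$ to the left-determined $\s$-inflation whose intrinsic weak cokernel recovers $C$. Finally, $(6)$ is an unraveling of the defining property of an almost split $\s$-triangle: I would show $(4)\Rightarrow(6)$ by taking $\alpha$ to be the $\s$-deflation of the almost split $\s$-triangle ending at $C$ and $f=\id_C$, so that the almost-factoring property is exactly the lifting property of almost split $\s$-triangles; and $(6)\Rightarrow(5)$ by arguing that a morphism almost factoring through $\alpha$ forces $\alpha$ to be a non-retraction right $C$-determined $\s$-deflation via the local endomorphism ring of $C$.

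The main obstacle I anticipate is the implication $(5)\Rightarrow(1)$ (and the parallel handling of $(3)$), because moving from the mere existence of a single non-retraction right $C$-determined $\s$-deflation to the structural conclusion $C\in\C_r$ requires reconstructing the almost split $\s$-triangle, which means one must verify that the intrinsic weak kernel supplied by Theorem~\ref{thm:det} genuinely lands in $\C_l$ and corresponds to $\tau C$ under the functors of Theorem~\ref{prop:quasi-inverse}, rather than to some unrelated object. Keeping track of the precise interplay between right $C$-determined morphisms, $\s$-projective morphisms in $\CP(C,Y)$, and the stable Hom functors that define $\C_r$ will be the delicate bookkeeping at the heart of the proof.
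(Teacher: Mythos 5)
Your proposal does not prove the statement it was asked to prove. The statement is Lemma~\ref{factor}, the factorization criterion for morphisms of $\s$-triangles (quoted in the paper from \cite[Corollary 3.5]{NP}): for a morphism $(a,b,c)$ of $\s$-triangles, the conditions that $a$ factors through $x$, that $a_\star\delta=c^\star\delta'=0$, and that $c$ factors through $y'$ are equivalent, with the consequence that $x$ is a section iff $\delta$ is split iff $y$ is a retraction. What you wrote is instead an outline of Theorem~\ref{thm:C}, the characterization of non-$\s$-projective indecomposable objects in $\C_r$. Nothing in your text engages with the lemma: you never argue that a factorization $a=f\circ x$ forces $a_\star\delta=0$, never extract a factorization of $c$ through $y'$ from the vanishing of $c^\star\delta'$, and never derive the section/retraction consequence. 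Worse, read as a proof of Lemma~\ref{factor} your argument is circular, because every result you invoke (Proposition~\ref{prop:C_r and C_l description}, Theorem~\ref{prop:quasi-inverse}, Theorem~\ref{thm:exist}, Theorem~\ref{thm:det}, Lemma~\ref{lem:exist.ass}) is proved in the paper \emph{using} Lemma~\ref{factor}.

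For comparison: the paper gives no proof of its own but cites \cite[Corollary 3.5]{NP}, and the intended argument is short and elementary, resting only on the long-exact-sequence statement \cite[Proposition 3.3]{NP} (which the paper also uses in the proof of Lemma~\ref{lem:det.PB}). If $a=f\circ x$, then $a_\star\delta=f_\star(x_\star\delta)=0$ since $x_\star\delta=0$ for any $\s$-triangle, and $c^\star\delta'=a_\star\delta$ because $(a,c)\colon\delta\to\delta'$ is a morphism of $\E$-extensions; this gives $(1)\Rightarrow(2)$. Conversely, exactness of $\C(C,B')\to\C(C,C')\xrightarrow{\delta'_\sharp}\E(C,A')$, where $\delta'_\sharp$ sends $g$ to $g^\star\delta'$, shows that $c^\star\delta'=0$ forces $c$ to factor through $y'$, giving $(2)\Rightarrow(3)$; the dual exact sequence $\C(B,A')\to\C(A,A')\to\E(C,A')$ handles $(3)\Rightarrow(2)\Rightarrow(1)$. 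Taking $(a,b,c)=(\Id_A,\Id_B,\Id_C)$ and $\delta=\delta'$ then yields the final assertion, since $\Id_A$ factors through $x$ exactly when $x$ is a section, $(\Id_A)_\star\delta=\delta$, and $\Id_C$ factors through $y$ exactly when $y$ is a retraction. None of the machinery of Sections 3 and 4 that you marshalled is needed, or even available, at this point of the paper.
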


\begin{definition}\label{DefCoStable}
Let $(\C,\E,\s)$ be a triple satisfying (ET1) and (ET2).
\begin{itemize}
  \item [(1)] Let $f\in\C(C',C)$ be a morphism. We call $f$ an \emph{$\s$-projective morphism} if $\mathbb{E}(f,A)=0$,
and  an \emph{$\s$-injective morphism}  if $\mathbb{E}(A,f)=0$ for any $A\in\C$.
  \item [(2)] Let $C\in\C$. We call $C$  an \emph{$\s$-projective object} if the identity morphism $\Id_C$ is $\s$-projective, and
   an \emph{$\s$-injective object} if the identity morphism $\Id_C$ is $\s$-injective.
\end{itemize}
\end{definition}

We call an $\s$-triangle $\xymatrix@C=0.5cm{
  A \ar[r]^x & B \ar[r]^y & C\ar@{-->}[r]^\delta& }
  $ \emph{split} if $\delta$ is a split $\E$-extension.

\begin{lemma}\label{arb}
Let $(\C,\E,\s)$ be a triple satisfying (ET1), (ET2), (ET3) and $\mbox{(ET3)}^{\op}$, and
let $f\in\C(C',C)$ be a morphism. Then the following statements are equivalent.
\begin{itemize}
  \item[$(1)$] $f$ is $\s$-projective.
  \item[$(2)$] $f$ factors through any $\s$-deflation $y\colon B\to C$.
  \item[$(3)$] For any $\s$-triangle $\xymatrix@C=0.5cm{A\ar[r]^x&B\ar[r]^y&C\ar@{-->}[r]^\delta&}$,
  if there exists a morphism of $\s$-triangles
  \begin{equation}\label{A}
  \begin{split}
    \xymatrix{
   A \ar[r]^{x'}\ar@{=}[d] & B' \ar[r]^{y'}\ar[d]^g & C'\ar[d]^f\ar@{-->}[r]^{f^\star\delta}& \\
    A \ar[r]^{x} & B \ar[r]^{y} & C \ar@{-->}[r]^{\delta}&,}
  \end{split}
\end{equation}
  then the top $\s$-triangle is split.
\end{itemize}
\end{lemma}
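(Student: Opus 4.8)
The plan is to funnel all three conditions through the single relation $f^\star\delta=0$. First I would unwind the definitions: by Definition \ref{DefCoStable}, $f$ is $\s$-projective exactly when $\E(f,A)=0$ for every $A\in\C$; since $\E(f,A)(\delta)=f^\star\delta$, this says precisely that $f^\star\delta=0$ for every $\E$-extension $\delta\in\E(C,A)$ over every $A$. The two tools I would lean on are Lemma \ref{factor} and the realization axiom (ET2): every $\delta\in\E(C,A)$ is realized by some $\s$-triangle with underlying sequence $A\xrightarrow{x}B\xrightarrow{y}C$ and class $\delta$, whose deflation $y$ is then an arbitrary $\s$-deflation with target $C$, and conversely every $\s$-deflation into $C$ arises in this way. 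With this in hand I would establish $(1)\Leftrightarrow(2)$ and $(1)\Leftrightarrow(3)$ separately.

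For $(1)\Leftrightarrow(2)$, I fix an $\s$-deflation $y\colon B\to C$, complete it to an $\s$-triangle with class $\delta$, and pull back along $f$ to obtain the morphism of $\s$-triangles $(\Id_A,g,f)$ shown in \eqref{A}, whose top row realizes $f^\star\delta$ and whose bottom row realizes $\delta$. Applying Lemma \ref{factor} to this morphism, with the right-hand map $c$ taken to be $f$, the equivalence of its parts (2) and (3) reads: $f$ factors through $y$ if and only if $f^\star\delta=0$. Reading this for a single $\delta$ yields $(1)\Rightarrow(2)$, while quantifying over all $\s$-deflations into $C$---equivalently, by (ET2), over all $\delta\in\E(C,A)$ for all $A$---yields $(2)\Rightarrow(1)$.

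For $(1)\Leftrightarrow(3)$, I observe that the top row of \eqref{A} realizes exactly $f^\star\delta$, so by the definition of a split $\s$-triangle recalled just before the lemma (together with the fact that the split $\E$-extension is the zero element), the phrase ``the top $\s$-triangle is split'' means literally $f^\star\delta=0$. Hence $(3)$ asserts $f^\star\delta=0$ for every $\delta$ that can be pulled back along $f$; since by (ET2) every $\delta\in\E(C,A)$ is realized by some $\s$-triangle and therefore admits such a pullback, $(3)$ is equivalent to requiring $f^\star\delta=0$ for all $A$ and all $\delta$, which is exactly $(1)$.

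The computations involved are light, so the main work is bookkeeping rather than a genuine obstacle. The point I would watch most carefully is the orientation of the morphism of $\s$-triangles in \eqref{A}: it runs from the triangle realizing $f^\star\delta$ to the one realizing $\delta$, so in applying Lemma \ref{factor} one must match its $\delta$ with $f^\star\delta$ and its distinguished map $c$ with $f$. The second point is to align the universal quantifiers---condition (1) ranges over all $A$ and all $\delta$, whereas (2) ranges over $\s$-deflations and (3) over pullback diagrams---and the realization axiom (ET2) is precisely what reconciles these ranges.
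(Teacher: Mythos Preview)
Your proposal is correct and follows essentially the same approach as the paper: both reduce everything to the criterion $f^\star\delta=0$ via Lemma~\ref{factor} and the definition of $\s$-projective morphisms. The only organizational difference is that the paper proves $(3)\Rightarrow(2)$ by exhibiting the factorization explicitly (using that $y'$ is a retraction, so $f=y\circ(g\circ y'')$ for a section $y''$ of $y'$), whereas you read the factorization off from Lemma~\ref{factor} directly; this is a cosmetic distinction rather than a substantive one.
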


\begin{proof}
(1) $\Leftrightarrow$ (3) It follows from the definition of $\s$-projective morphisms.

(3) $\Rightarrow$ (2) Since  $y'$ is a retraction by Lemma \ref{factor}, it follows that there exists a morphism $y'': C'\to B'$ such that
$y'\circ y''=\Id_{C'}$. Then $y\circ(g\circ y'')=f\circ y'\circ y''=f$, and hence (2) holds.

(2) $\Rightarrow$ (3) For any $\s$-triangle $\xymatrix@C=0.5cm{A\ar[r]^x&B\ar[r]^y&C\ar@{-->}[r]^\delta&}$,
consider the diagram (\ref{A}). By assumption, $f$
factors through $y$, and so $x'$ is a section by  Lemma \ref{factor}. Thus $f^\star\delta=0$, that is, the top $\s$-triangle is split.
\end{proof}

We denote by $\mathcal{P}$ (respectively, $\mathcal{I}$) the ideal of $\C$ consisting of all $\s$-projective
(respectively, $\s$-injective) morphisms.
The \emph{stable category} (respectively, \emph{costable category}) of $\C$ is defined as the ideal quotient
\[\underline{\C}:=\C/\mathcal{P}\ (\mbox{respectively,}\ \overline{\C}:=\C/\mathcal{I}).\]

\begin{lemma}\label{lem:ft.stable}
Let $\alpha\colon X \to Y$ be an $\s$-deflation. Then a morphism $f\colon T\to Y$ factors through $\alpha$ in $\C$
if and only if $\underline f$ factors through $\underline\alpha$ in $\underline\C$.
\end{lemma}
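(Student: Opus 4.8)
The plan is to dispose of the two directions separately, with essentially all the work concentrated in the ``if'' direction. The ``only if'' direction is immediate: the ideal quotient functor $\C \to \underline\C$ is additive, so any factorization $f = \alpha \circ g$ in $\C$ descends at once to $\underline f = \underline\alpha \circ \underline g$ in $\underline\C$.

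For the converse, I would start from a factorization $\underline f = \underline\alpha \circ \underline g$ in $\underline\C$ and choose a representative $g \in \C(T,X)$ of $\underline g$. Unwinding the definition of the ideal quotient $\underline\C = \C/\mathcal{P}$, this equality of morphisms in $\underline\C$ means precisely that the difference $h := f - \alpha\circ g$ lies in $\mathcal{P}(T,Y)$; that is, $h$ is an $\s$-projective morphism. The key step is then to invoke Lemma \ref{arb}: since $h$ is $\s$-projective and $\alpha$ is an $\s$-deflation, the equivalence $(1) \Leftrightarrow (2)$ there guarantees that $h$ factors through $\alpha$, say $h = \alpha\circ g'$ for some $g' \in \C(T,X)$. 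Adding the two factorizations gives $f = h + \alpha\circ g = \alpha\circ(g' + g)$, so $f$ factors through $\alpha$ in $\C$, as required.

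I do not expect any serious obstacle: the content of the lemma is exactly the observation that the defining property of $\s$-projective morphisms recorded in Lemma \ref{arb}, namely that they factor through every $\s$-deflation, is what allows one to lift a factorization known only up to the ideal $\mathcal{P}$ back to an honest factorization in $\C$. The only point that requires a moment's care is the translation between the equality $\underline f = \underline\alpha \circ \underline g$ in the quotient and the membership $f - \alpha\circ g \in \mathcal{P}(T,Y)$, but this is nothing more than the definition of the ideal quotient $\C/\mathcal{P}$.
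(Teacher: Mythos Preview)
Your proposal is correct and follows essentially the same approach as the paper: both dismiss the ``only if'' direction as trivial, then for the converse lift $\underline g$ to $g$, observe that $f - \alpha\circ g$ is $\s$-projective, and use that $\s$-projective morphisms factor through any $\s$-deflation (the paper uses this directly, you cite it as Lemma~\ref{arb}) to conclude $f = \alpha\circ(g + g')$.
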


\begin{proof}
It suffices to show the sufficiency. Assume that $\underline f$ factors through $\underline\alpha$ in $\underline\C$.
Then there exists some morphism $g\colon T\to X$ in $\C$ such that $\underline f = \underline\alpha\circ\underline g$
in $\underline\C$. We have that $f-\alpha\circ g$ is $\s$-projective in $\C$. Since $\alpha$ is an $\s$-deflation,
there exists some morphism $h\colon T\to X$ such that $f-\alpha\circ g=\alpha\circ h$ in $\C$. It follows that
$f=\alpha\circ(g+h)$, factoring through $\alpha$ in $\C$.
\end{proof}

From now on, unless otherwise stated, we assume that the  extriangulated category $(\C,\E,\s)$ is an Ext-finite,
Krull-Schmidt and $k$-linear extriangulated category with $k$ a
commutative artinian ring. Here, an extriangulated category $(\C,\E,\s)$ is \emph{$k$-linear}
if $\C(A,B)$ and $\E(A,B)$ are $k$-modules such that the following compositions
\begin{eqnarray*}
&\C(A,B)\times\C(B,C)\to\C(A,C),&\\
&\C(A,B)\times\E(B,C)\times\C(C,D)\to\E(A,D),&
\end{eqnarray*}
are $k$-linear for any $A,B,C,D\in\C$; and is \emph{Ext-finite} if $\E(A,B)$ is a finitely generated $k$-module for any $A,B\in\C$.

\subsection{Almost split extensions}

In \cite{INP}, Iyama, Nakaoka and Palu introduced the notion of almost split $\E$-extensions.

\begin{definition}\label{DefARExt} (\cite[Definition 2.1]{INP})
A non-split (i.e. non-zero) $\E$-extension $\delta\in\E(C,A)$ is said to be {\it almost split} if it satisfies the following conditions.
\begin{itemize}
\item[{\rm (AS1)}] $a_{\star}\delta=0$ for any non-section $a\in\C(A,A^{\prime})$.
\item[{\rm (AS2)}] $c^{\star}\delta=0$ for any non-retraction $c\in\C(C^{\prime},C)$.
\end{itemize}
\end{definition}

\begin{definition}\label{DefARSEQ} (\cite[Definition 2.7]{INP})
Let $\xymatrix@C=0.5cm{A\ar[r]^x&B\ar[r]^y&C\ar@{-->}[r]^\delta&}$ be an $\s$-triangle in $\C$. It is called {\it almost split} if $\delta$ is an
almost split $\E$-extension.
\end{definition}

The following class of morphisms is basic to understand almost split $\s$-triangles.

\begin{definition} (\cite[Definition 2.8]{INP})
Let $\C$ be an additive category and $A$ an object in $\C$. A morphism $a\colon A\to B$ which is not a section is called \emph{left almost split} if
\begin{itemize}
\item[$\bullet$] any morphism $A\to B^{\prime}$ which is not a section factors through $a$.
\end{itemize}
Dually, a morphism $a\colon B\to A$ which is not a retraction is called \emph{right almost split} if
\begin{itemize}
\item[$\bullet$] any morphism $B^{\prime}\to A$ which is not a retraction factors through $a$.
\end{itemize}
\end{definition}

It was showed that for an $\s$-triangle $\xymatrix@C=0.5cm{A\ar[r]^x&B\ar[r]^y&C\ar@{-->}[r]^\delta&}$,
it is almost split if and only if $x$ is left almost split and $y$ is right almost split (\cite{INP}).

Recall that a morphism $f\colon X\to Y$ is called \emph{right minimal}, if each $g\in\Endc(X)$ with $f\circ g=f$ is an automorphism.
Dually, a morphism $f\colon X\to Y$ is called \emph{left minimal}, if each $g\in\Endc(Y)$ with $g\circ f=f$ is an automorphism.

By \cite[Propositions 2.5 and 2.10]{INP}, for an $\s$-triangle $\xymatrix@C=0.5cm{A\ar[r]^x&B\ar[r]^y&C\ar@{-->}[r]^\delta&}$, we have
\begin{itemize}
  \item if $x$ is left almost split, then $A$ is indecomposable and $y$ is right minimal;
  \item if $y$ is right almost split, then $C$ is indecomposable and $x$ is left minimal.
\end{itemize}
So if the  $\s$-triangle $\xymatrix@C=0.5cm{A\ar[r]^x&B\ar[r]^y&C\ar@{-->}[r]^\delta&}$ is almost split, then $x$  is left minimal,
$y$ is right minimal and $A,C$ are indecomposable.

\section{Two subcategories and two functors}

\subsection{Two subcategories $\mathfrak{C}_r$ and $\mathfrak{C}_l$}

Let $k$ be a commutative artinian ring and $\check{k}$ the minimal injective cogenerator for
the category $k$-mod of finitely generated $k$-modules. We write $D:=\Hom_k(-,\check{k})$.

The following two lemmas are essentially contained in \cite[Proposition 3.1]{INP} and its proof.

\begin{lemma}\label{lemma:representable}
Let $\C$ be a $k$-linear extriangulated category and
$$\xymatrix@C=0.5cm{X\ar[r]& E\ar[r]& Y\ar@{-->}[r]^{\eta}&}$$
an almost split $\s$-triangle, and let $\gamma\in D\E(Y,X)$ such that $\gamma(\eta)\neq 0$.
\begin{enumerate}
    \item[$(1)$] For each $M$, we have a non-degenerated $k$-bilinear map
      \[\langle-,-\rangle_M\colon\Cco(M,X)\times\E(Y,M)\longrightarrow\check{k}, \quad(\overline f,\mu)\mapsto\gamma(f_\star\mu).\]
      If moreover $\Cco(M,X)\in k\textnormal{-mod}$ for each $M$, then the induced map
      \[\phi_{Y,M}\colon\Cco(M,X)\longrightarrow D\E(Y,M), \quad\overline{f}\mapsto\langle\overline{f},-\rangle_M,\]
      is an isomorphism and natural in $M$ with $\gamma=\phi_{Y,X}(\overline{\Id_X})$.
    \item[$(2)$] For each $M$, we have a non-degenerated $k$-bilinear map
      \[\langle-,-\rangle_M\colon\E(M,X)\times\Ccu(Y,M)\longrightarrow\check{k},\quad(\mu,\underline g)\mapsto\gamma(g^\star\mu).\]
      If moreover $\Ccu(Y,M)\in k\textnormal{-mod}$ for each $M$, then the induced map
      \[\psi_{X,M}\colon\Ccu(Y,M)\longrightarrow D\E(M,X), \quad\underline{g}\mapsto\langle-,\underline{g}\rangle_M,\]
      is an isomorphism and natural in $M$ with $\gamma=\psi_{X,Y}(\underline{\Id_Y})$.
  \end{enumerate}
\end{lemma}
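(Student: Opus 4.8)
The plan is to treat both parts symmetrically, proving (1) in detail and obtaining (2) by the evident dualization. First I would record that the pairing in (1) is well defined and bilinear: if $f\colon M\to X$ is $\s$-injective then $\E(Y,f)=0$, so $f_\star\mu=\E(Y,f)(\mu)=0$ and $\gamma(f_\star\mu)=0$, which shows the assignment $(\overline f,\mu)\mapsto\gamma(f_\star\mu)$ factors through $\Cco(M,X)=\C(M,X)/\mathcal I(M,X)$; bilinearity over $k$ is immediate from biadditivity of $\E$ and the $k$-linearity hypotheses. The whole statement then reduces to a single analytic core, namely \emph{non-degeneracy} of this pairing, after which both the isomorphism and the naturality are formal. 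The identity $\gamma=\phi_{Y,X}(\overline{\Id_X})$ is checked directly, since $\phi_{Y,X}(\overline{\Id_X})(\mu)=\gamma((\Id_X)_\star\mu)=\gamma(\mu)$.

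The heart of the argument is non-degeneracy, and I expect this to be the main obstacle; both directions run on the same mechanism, turning a nonzero extension into the almost split $\eta$ via the almost split inflation/deflation and the lifting axioms. For \emph{right} non-degeneracy, fix $0\neq\mu\in\E(Y,M)$ and realize it as an $\s$-triangle $\xymatrix@C=0.4cm{M\ar[r]^p&N\ar[r]^q&Y\ar@{-->}[r]^\mu&}$; since $\mu$ is non-split, $q$ is not a retraction by Lemma \ref{factor}. As $\eta$ is almost split, its $\s$-deflation $y\colon E\to Y$ is right almost split, so $q=y\circ g$ for some $g\colon N\to E$; applying $\mathrm{(ET3)}^{\op}$ to the commutative right square $(g,\Id_Y)$ produces $f\colon M\to X$ with $(f,\Id_Y)\colon\mu\to\eta$, hence $f_\star\mu=(\Id_Y)^\star\eta=\eta$ and $\gamma(f_\star\mu)=\gamma(\eta)\neq0$. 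For \emph{left} non-degeneracy, suppose $\overline f\neq0$, i.e. $f\colon M\to X$ is not $\s$-injective, so there are $A$ and $\nu\in\E(A,M)$ with $f_\star\nu\neq0$; realizing $f_\star\nu$ as $\xymatrix@C=0.4cm{X\ar[r]^{p'}&N'\ar[r]^{q'}&A\ar@{-->}[r]&}$, the $\s$-inflation $p'$ is not a section (Lemma \ref{factor}). Since $\eta$ is almost split, $x\colon X\to E$ is left almost split, so $p'=s\circ x$ for some $s\colon E\to N'$, and applying $\mathrm{(ET3)}$ to the commutative left square $(\Id_X,s)$ yields $c\colon Y\to A$ with $\eta=(\Id_X)_\star\eta=c^\star(f_\star\nu)=f_\star(c^\star\nu)$. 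Setting $\mu=c^\star\nu\in\E(Y,M)$ gives $\gamma(f_\star\mu)=\gamma(\eta)\neq0$. Thus both kernels of the pairing vanish.

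Finally, assuming $\Cco(M,X)\in k\textnormal{-mod}$ (and using $\E(Y,M)\in k\textnormal{-mod}$ from Ext-finiteness), I would pass from non-degeneracy to the isomorphism by a length count: since $D=\Hom_k(-,\check k)$ is a Matlis duality on $k\textnormal{-mod}$ with $\ell(DW)=\ell(W)$, left non-degeneracy gives an injection $\phi_{Y,M}\colon\Cco(M,X)\hookrightarrow D\E(Y,M)$ and right non-degeneracy gives an injection $\E(Y,M)\hookrightarrow D\Cco(M,X)$; comparing lengths forces $\ell(\Cco(M,X))=\ell(\E(Y,M))=\ell(D\E(Y,M))$, so the injection $\phi_{Y,M}$ is an isomorphism. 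Naturality in $M$ is routine: for $h\colon M\to M'$ one computes $\phi_{Y,M}(\overline{g\circ h})(\mu)=\gamma((gh)_\star\mu)=\phi_{Y,M'}(\overline g)(h_\star\mu)$, which is exactly the required square $\phi_{Y,M}\circ\Cco(h,X)=D\E(Y,h)\circ\phi_{Y,M'}$. Part (2) is obtained by the dual construction: the pairing $(\mu,\underline g)\mapsto\gamma(g^\star\mu)$ is well defined on $\Ccu(Y,M)$ because $\s$-projective $g$ gives $\E(g,X)=0$, and its non-degeneracy is proved by interchanging the roles of $x$ and $y$ (and of $\mathrm{(ET3)}$ and $\mathrm{(ET3)}^{\op}$) in the argument above, yielding $g^\star\mu=\eta$ in each case; the isomorphism $\psi_{X,M}$ and the identity $\gamma=\psi_{X,Y}(\underline{\Id_Y})$ follow verbatim. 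The delicate point throughout is the non-degeneracy: the technical crux is verifying that the commutative squares assembled from the almost split property genuinely satisfy the hypotheses of $\mathrm{(ET3)}$ and $\mathrm{(ET3)}^{\op}$, so that the produced morphisms of $\s$-triangles realize the equalities $f_\star\mu=\eta$ on the level of $\E$-extensions.
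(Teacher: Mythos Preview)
Your proof is correct. The paper does not give its own proof of this lemma; it simply records that the statement is ``essentially contained in \cite[Proposition 3.1]{INP} and its proof.'' Your argument---factoring through the right (resp.\ left) almost split morphism to produce a morphism of $\s$-triangles via $\mathrm{(ET3)}^{\op}$ (resp.\ $\mathrm{(ET3)}$), thereby reducing $f_\star\mu$ (resp.\ $g^\star\mu$) to $\eta$, and then upgrading non-degeneracy to an isomorphism by a length comparison under Matlis duality---is the standard route and matches what one finds in the cited source. The identities $\gamma=\phi_{Y,X}(\overline{\Id_X})$ and $\gamma=\psi_{X,Y}(\underline{\Id_Y})$ and the naturality checks are immediate, as you note.
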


\begin{lemma}\label{lemma:a.s.s.}
Let $\C$ be a $k$-linear extriangulated category and $Y\in\C$ an indecomposable object.
\begin{enumerate}
\item[$(1)$] Assume $D\E(Y,Z)\in k\textnormal{-mod}$ for each $Z\in\C$. If the functor $D\E(Y,-)$
is isomorphic to $\Cco(-,Y')$ for some $Y'$, which has a non-$\s$-injective indecomposable direct summand,
then there exists an almost split $\s$-triangle ending at $Y$.
\item[$(2)$] Assume $D\E(Z,Y)\in k\textnormal{-mod}$ for each $Z\in\C$. If the functor $D\E(-,Y)$
is isomorphic to $\Ccu(Y',-)$ for some $Y'$, which has a non-$\s$-projective indecomposable direct summand,
then there exists an almost split $\s$-triangle starting at $Y$.
\end{enumerate}
\end{lemma}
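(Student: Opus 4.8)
The plan is to prove (1) by converting the representability hypothesis into an explicit construction of the first‑term object and of the almost split extension, and to obtain (2) by the dual argument. Write $\Phi\colon\Cco(-,Y')\xrightarrow{\ \sim\ }D\E(Y,-)$ for the given natural isomorphism. First I would recover $\Phi$ from one element by the Yoneda lemma in the costable category: putting $\gamma:=\Phi_{Y'}(\overline{\Id_{Y'}})\in D\E(Y,Y')$, naturality forces $\Phi_M(\overline g)(\mu)=\gamma(g_\star\mu)$ for all $g\colon M\to Y'$ and $\mu\in\E(Y,M)$, so $\Phi$ is induced by the $k$-bilinear pairing $\Cco(M,Y')\times\E(Y,M)\to\check k$, $(\overline g,\mu)\mapsto\gamma(g_\star\mu)$. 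As each $\Phi_M$ is an isomorphism and, via $\Phi_M$, all the modules in sight are finitely generated ($\E$ being $\Ext$-finite), Matlis duality makes this pairing non-degenerate on both sides — the same pairing as in Lemma \ref{lemma:representable}(1), but now available before any almost split $\s$-triangle is known to exist.

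Next I would isolate the first‑term object. Since $\Cco(-,I)=0$ for every $\s$-injective $I$, the $\s$-injective summands of $Y'$ do not affect $\Cco(-,Y')$, so I may fix a non-$\s$-injective indecomposable direct summand $X$ of $Y'$ (one exists by hypothesis) and the corresponding splitting $\E(Y,-)=G\oplus G'$ of functors with $DG\cong\Cco(-,X)$. Restricting the pairing gives a non-degenerate natural pairing $\Cco(M,X)\times G(M)\to\check k$, and at $M=X$ a perfect pairing between $\End_{\Cco}(X)$ and $G(X)\subseteq\E(Y,X)$. Because $X$ is indecomposable and non-$\s$-injective, $\End_{\Cco}(X)$ is local (every $\s$-injective endomorphism of $X$ is a non-isomorphism, hence lies in $\rad\End_\C(X)$). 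I would then pick a nonzero $\eta\in G(X)$ in the annihilator $(\rad\End_{\Cco}(X))^{\perp}$ of the radical (nonzero since the radical is a proper ideal), and realize it as an $\s$-triangle $\xymatrix@C=0.5cm{X\ar[r]^x&E\ar[r]^y&Y\ar@{-->}[r]^\eta&}$; as $\eta\neq0$ it is non-split, so $x$ is not a section.

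The core step is to check that $x$ is left almost split, i.e. $a_\star\eta=0$ for every non-section $a\colon X\to W$. As $G$ is a subfunctor, $a_\star\eta$ lies in $G(W)$, so by non-degeneracy it suffices that $\langle\overline g,a_\star\eta\rangle=0$ for all $g\colon W\to X$; the pairing identity rewrites this as $\langle\overline{g\circ a},\eta\rangle=0$. Now $g\circ a\in\End_{\Cco}(X)$ is a non-isomorphism: if $\overline{g\circ a}$ were invertible then $\overline a$ would be a split monomorphism in $\Cco$, and since $X$ is non-$\s$-injective an $\s$-injective correction term lies in $\rad\End_\C(X)$, so $\overline a$ split mono would lift to a section in $\C$ — contrary to hypothesis. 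Hence $\overline{g\circ a}\in\rad\End_{\Cco}(X)$, which annihilates $\eta$ by choice. Thus $a_\star\eta=0$, and by Lemma \ref{factor} every non-section out of $X$ factors through $x$, i.e. $x$ is left almost split.

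It remains to upgrade this to an almost split $\s$-triangle, i.e. to verify (AS2): $c^\star\eta=0$ for every non-retraction $c\colon W\to Y$. This is where I expect the main difficulty, since the pairing coming from $D\E(Y,-)$ only controls morphisms \emph{out of} the first term $X$ and is blind to morphisms \emph{into} $Y$; so (AS2) cannot be read off from representability and must instead be extracted from the indecomposability of $Y$. Concretely I would argue by contradiction: given a non-retraction $c$ with $c^\star\eta\neq0$, pull $\eta$ back along $c$ to a non-split $\s$-triangle $\xymatrix@C=0.5cm{X\ar[r]^{x'}&E'\ar[r]&W\ar@{-->}[r]&}$ mapping to the original by a triple with identity first component; since $x$ is left almost split the non-section $x'$ factors through $x$, and comparing this with the commutativity of the morphism of $\s$-triangles forces $c$ to be a retraction, a contradiction. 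This is precisely the passage supplied by the extriangulated Auslander–Reiten theory of \cite{INP} (and is the heart of \cite[Proposition 3.1]{INP}), which I would cite to conclude that our non-split $\s$-triangle, having left almost split inflation and indecomposable end $Y$, is almost split and ends at $Y$. Part (2) is strictly dual: replacing $\Cco$ by $\Ccu$, $\s$-injectives by $\s$-projectives, and Lemma \ref{lemma:representable}(1) by Lemma \ref{lemma:representable}(2), one builds a non-split $\s$-triangle starting at $Y$ whose deflation is right almost split, the complementary condition again coming from the indecomposability of $Y$, and concludes by the same characterization.
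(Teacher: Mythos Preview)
The paper does not give its own proof of this lemma; it simply records that both Lemma~\ref{lemma:representable} and Lemma~\ref{lemma:a.s.s.} are ``essentially contained in \cite[Proposition~3.1]{INP} and its proof.'' Your outline is therefore considerably more detailed than what the paper offers, and your treatment of (AS1) --- isolating a non-$\s$-injective indecomposable summand $X$ of $Y'$, using locality of $\End_{\Cco}(X)$, and choosing $\eta$ in the annihilator of the radical under the pairing --- is correct and is indeed the argument one finds in \cite{INP}.

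There is, however, a genuine gap in your sketch of (AS2). From the factorisation $x'=g\circ x$ and (ET3) you obtain a morphism $(\Id_X,g,c')\colon\eta\to c^\star\eta$; composing with $(\Id_X,b,c)\colon c^\star\eta\to\eta$ yields only $(cc')^\star\eta=\eta$ with $cc'\in\rad\End_{\C}(Y)$. This does \emph{not} by itself force $c$ to be a retraction: nothing in the triangle comparison prevents an element of $\rad\End_{\C}(Y)$ from fixing a nonzero element of $\E(Y,X)$, and right minimality of $y$ (which is all that \cite[Proposition~2.5]{INP} gives from $x$ being left almost split) does not apply since you have $y\circ(bg)=(cc')\circ y$, not $y\circ(bg)=y$. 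The missing ingredient is the finiteness hypothesis: since $\E(Y,X)$ has finite length over the artinian ring $k$, the image of the local ring $\End_{\C}(Y)$ in $\End_k(\E(Y,X))$ is an artinian local ring, so its radical is nilpotent; hence $((cc')^\star)^n\eta=0$ for large $n$, and combined with $(cc')^\star\eta=\eta$ this forces $\eta=0$, the desired contradiction. Your fallback of invoking \cite[Proposition~3.1]{INP} at exactly this point is effectively what the paper does for the entire lemma, so it is not illegitimate --- but the sentence ``comparing this with the commutativity of the morphism of $\s$-triangles forces $c$ to be a retraction'' is not correct as written.
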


We introduce two full subcategories of $\C$ as follows.
\[\C_r=\set{X\in\C\mid\text{the functor $D\E(X,-)\colon\overline\C\to k$-mod is representable}},\]
\[\C_l=\set{X\in\C\mid\text{the functor $D\E(-,X)\colon\underline\C\to k$-mod is representable}}.\]
Here, both $\C_r$ and $\C_l$ are additive subcategories which are closed under direct summands.
As a consequence of Lemmas~\ref{lemma:representable} and \ref{lemma:a.s.s.},
we have the following description of indecomposable objects in $\C_r$ and $\C_l$.

\begin{proposition}\label{prop:C_r and C_l description}
Let $Y$ be an indecomposable object in $\C$.
\begin{enumerate}
\item[$(1)$] If $Y$ is non-$\s$-projective, then $Y\in\C_r$ if and only if there exists an almost split $\s$-triangle ending at $Y$.
\item[$(2)$] If $Y$ is non-$\s$-injective, then $Y\in\C_l$ if and only if there exists an almost split $\s$-triangle starting at $Y$.
\end{enumerate}
\end{proposition}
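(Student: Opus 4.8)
The plan is to read off both statements from Lemmas~\ref{lemma:representable} and \ref{lemma:a.s.s.}, with part $(2)$ obtained from part $(1)$ by the evident dualization (replace $D\E(Y,-)$, $\overline\C$ and ``$\s$-injective'' by $D\E(-,Y)$, $\underline\C$ and ``$\s$-projective'', and swap the two items of each lemma). I therefore focus on part $(1)$, with $Y$ indecomposable and non-$\s$-projective. Two standing facts will be used repeatedly: first, the hypotheses (Ext-finite, Krull--Schmidt, $k$-linear over a commutative artinian $k$) ensure the finiteness condition ``$\Cco(M,X)\in k$-mod'' demanded by Lemma~\ref{lemma:representable}; second, since $\check k$ is an injective cogenerator, $D=\Hom_k(-,\check k)$ reflects vanishing, so $D\E(A,B)=0$ if and only if $\E(A,B)=0$.

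For the ``if'' direction I would start from an almost split $\s$-triangle $\xymatrix@C=0.5cm{X\ar[r]&E\ar[r]&Y\ar@{-->}[r]^{\eta}&}$ ending at $Y$. Being almost split, $\eta$ is non-split, hence $\eta\neq 0$, so the cogenerator property furnishes some $\gamma\in D\E(Y,X)$ with $\gamma(\eta)\neq 0$. Feeding $\gamma$ into Lemma~\ref{lemma:representable}$(1)$ yields a natural isomorphism $\Cco(-,X)\cong D\E(Y,-)$, which exhibits $D\E(Y,-)$ as representable; thus $Y\in\C_r$.

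For the ``only if'' direction, suppose $Y\in\C_r$, so that $D\E(Y,-)\cong\Cco(-,Y')$ for some object $Y'$. By Lemma~\ref{lemma:a.s.s.}$(1)$ it is enough to show that $Y'$ has a non-$\s$-injective indecomposable direct summand, and this is the one step carrying genuine content. Since $\E(\Id_Y,-)$ is the identity on $\E(Y,-)$, the object $Y$ is $\s$-projective precisely when $\E(Y,M)=0$ for all $M$; as $Y$ is non-$\s$-projective there is some $M$ with $\E(Y,M)\neq 0$, whence $D\E(Y,M)\neq 0$ and therefore $\Cco(M,Y')\neq 0$. In particular $Y'\not\cong 0$ in $\overline\C$, so $Y'$ is not $\s$-injective. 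Using Krull--Schmidt to decompose $Y'$ into indecomposables, at least one summand must be non-$\s$-injective, since an object all of whose indecomposable summands are $\s$-injective is itself $\s$-injective (equivalently zero in $\overline\C$). Lemma~\ref{lemma:a.s.s.}$(1)$ then delivers an almost split $\s$-triangle ending at $Y$.

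The hard part is exactly this last verification that the representing object $Y'$ cannot be purely $\s$-injective; everything else is a direct invocation of the two lemmas together with the cogenerator property of $\check k$. Dualizing to part $(2)$ is then routine, once one records that $Y$ is $\s$-injective if and only if $\E(M,Y)=0$ for all $M$, and that an object is zero in $\underline\C$ exactly when it is $\s$-projective.
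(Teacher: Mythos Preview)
Your proposal is correct and follows precisely the route the paper intends: the proposition is stated in the paper as a direct consequence of Lemmas~\ref{lemma:representable} and~\ref{lemma:a.s.s.}, and you have faithfully spelled out the only nontrivial step, namely that the representing object $Y'$ must have a non-$\s$-injective indecomposable summand when $Y$ is non-$\s$-projective. One minor remark: your assertion that the standing hypotheses force $\Cco(M,X)\in k\text{-mod}$ is justified, but the cleanest reason is that the non-degeneracy in Lemma~\ref{lemma:representable}(1) already embeds $\Cco(M,X)$ into $D\E(Y,M)$, and the latter lies in $k\text{-mod}$ by Ext-finiteness and Matlis duality.
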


We also have the following easy observation.

\begin{proposition}\label{lemma:closed under iso}
Let $X$ and $Y$ be two objects in $\C$.
\begin{enumerate}
\item[$(1)$] If $X\in\C_r$ and $X\simeq Y$ in $\underline\C$, then $Y\in\C_r$.
\item[$(2)$] If $X\in\C_l$ and $X\simeq Y$ in $\overline\C$, then $Y\in\C_l$.
\end{enumerate}
\end{proposition}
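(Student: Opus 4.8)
The plan is to use the bifunctoriality of $\E$ to turn a stable (respectively, costable) isomorphism between $X$ and $Y$ into a natural isomorphism of the relevant functors, and then to invoke the trivial fact that representability is preserved under natural isomorphism.

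First I would record the observation about variances that makes the two quotient categories appearing in the statement compatible. By Definition \ref{DefCoStable}, an $\s$-projective morphism $f$ satisfies $\E(f,A)=0$ for all $A$, so for each fixed $M$ the contravariant functor $\E(-,M)$ annihilates the ideal $\mathcal{P}$ and hence factors through $\underline\C^{\op}$; dually, for each fixed $M$ the functor $\E(M,-)$ annihilates $\mathcal{I}$ and factors through $\overline\C$, and likewise $\E(X,-)$ (with $X$ fixed) factors through $\overline\C$, which is exactly why $D\E(X,-)$ is a well-defined functor on $\overline\C$ in the definition of $\C_r$.

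For part $(1)$, assume $X\in\C_r$ and that $X\simeq Y$ in $\underline\C$, witnessed by $u\colon X\to Y$ and $v\colon Y\to X$ with $v\circ u-\Id_X$ and $u\circ v-\Id_Y$ both $\s$-projective. Applying the partial functor $\E(-,M)$, which kills $\s$-projective morphisms, I get $\E(v\circ u,M)=\E(\Id_X,M)=\Id_{\E(X,M)}$ and $\E(u\circ v,M)=\Id_{\E(Y,M)}$, so that $\E(u,M)$ and $\E(v,M)$ are mutually inverse isomorphisms between $\E(Y,M)$ and $\E(X,M)$. These are the components of the natural transformations $\E(u,-)$ and $\E(v,-)$ induced by $u$ and $v$, whence $\E(u,-)\colon\E(Y,-)\Rightarrow\E(X,-)$ is a natural isomorphism of functors on $\C$. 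Since both sides factor through $\overline\C$, this descends to a natural isomorphism of functors $\overline\C\to k\text{-mod}$, and applying $D$ gives $D\E(X,-)\cong D\E(Y,-)$ on $\overline\C$. As $D\E(X,-)$ is representable by hypothesis, so is $D\E(Y,-)$, and therefore $Y\in\C_r$. Part $(2)$ follows by the dual argument, exchanging $\underline\C$ with $\overline\C$, the ideal $\mathcal{P}$ with $\mathcal{I}$, and the two variables of $\E$.

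I do not anticipate a genuine obstacle, since the content is essentially formal. The only point demanding care is the bookkeeping of variances: membership in $\C_r$ is tested by a functor on the costable category $\overline\C$, yet the hypothesis $X\simeq Y$ lives in the stable category $\underline\C$, and the two are reconciled precisely because it is $\E(-,M)$ that descends to $\underline\C^{\op}$ while $\E(X,-)$ descends to $\overline\C$. Once this is observed, naturality in $M$ is automatic from the bifunctoriality of $\E$, so no computation beyond the two composite identities above is required.
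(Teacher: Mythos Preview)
Your proof is correct and follows exactly the same idea as the paper's: the paper simply asserts that $D\E(X,-)\simeq D\E(Y,-)$ as functors and concludes, while you supply the details of why this isomorphism holds (namely, $\E(-,M)$ annihilates $\mathcal{P}$ so a stable isomorphism $X\simeq Y$ induces $\E(X,-)\cong\E(Y,-)$, hence $D\E(X,-)\cong D\E(Y,-)$). Your discussion of the variances is a helpful clarification of a point the paper leaves implicit.
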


\begin{proof}
(1) The assertion follows from the fact that $D\E(X,-)\simeq D\E(Y,-)$ as functors.

(2) It is dual to (1).
\end{proof}

\subsection{Two functors $\tau$ and $\tau^{-}$}

For each $Y$ in $\C_r$, we define $\tau Y$ to be an object in $\C$ such that there exists an isomorphism of functors
\[\phi_Y\colon\Cco(-,\tau Y)\longrightarrow D\E(Y,-).\]
Then $\tau$ gives a map from the objects of $\C_r$ to that of $\C$. Dually, for each $X$ in $\C_l$,
we define $\tau^- X$ to be an object in $\C$ such that there exists an isomorphism of functors
\[\psi_X\colon\Ccu(\tau^-X,-)\longrightarrow D\E(-,X).\]
Then $\tau^-$ gives a map from the objects of $\C_l$ to that of $\C$.

\begin{lemma}\label{lemma:tau send iso to iso}
Let $X$ and $Y$ be two objects in $\C$.
\begin{enumerate}
\item[$(1)$] If $X,Y\in\C_r$ and $X\simeq Y$ in $\underline\C$, then $\tau X\simeq\tau Y$ in $\overline\C$.
\item[$(2)$] If $X,Y\in\C_l$ and $X\simeq Y$ in $\overline\C$, then $\tau^-X\simeq\tau^-Y$ in $\underline\C$.
\end{enumerate}
\end{lemma}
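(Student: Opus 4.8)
The plan is to reduce both statements to the Yoneda lemma in the appropriate quotient category, once I observe that the functor $D\E(Y,-)$ depends on $Y$ only through its class in $\underline{\C}$. I will treat (1) only, since (2) is entirely dual, with the roles of $\s$-projective and $\s$-injective morphisms — and hence of $\underline{\C}$ and $\overline{\C}$ — interchanged.

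First I would record the crucial naturality observation. By Definition~\ref{DefCoStable}, an $\s$-projective morphism $f$ satisfies $\E(f,M)=0$ for every $M$; since $\E$ is biadditive, this says that the contravariant functor $\E(-,M)$ annihilates the ideal $\mathcal{P}$ and therefore factors through $\underline{\C}=\C/\mathcal{P}$. Consequently, if $X\simeq Y$ in $\underline{\C}$, realized by morphisms $a\colon X\to Y$ and $b\colon Y\to X$ in $\C$ whose classes are mutually inverse, then $b\circ a-\Id_X$ and $a\circ b-\Id_Y$ are $\s$-projective, so $\E(a,M)$ and $\E(b,M)$ are mutually inverse isomorphisms $\E(Y,M)\cong\E(X,M)$, natural in $M$. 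Applying the duality $D$ then yields a natural isomorphism $D\E(X,-)\cong D\E(Y,-)$; this is precisely the functorial isomorphism already invoked in the proof of Proposition~\ref{lemma:closed under iso}.

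Next I would splice this with the two representing isomorphisms. By the definition of $\tau$, there are isomorphisms of functors $\phi_X\colon\overline{\C}(-,\tau X)\to D\E(X,-)$ and $\phi_Y\colon\overline{\C}(-,\tau Y)\to D\E(Y,-)$ on $\overline{\C}$. Composing $\phi_X$, the isomorphism $D\E(X,-)\cong D\E(Y,-)$ from the previous step, and $\phi_Y^{-1}$ produces a natural isomorphism $\overline{\C}(-,\tau X)\xrightarrow{\sim}\overline{\C}(-,\tau Y)$ of representable functors on the costable category $\overline{\C}$. Since $\overline{\C}$ is an additive ($k$-linear) category, the Yoneda lemma converts this isomorphism of representable functors into an isomorphism $\tau X\simeq\tau Y$ in $\overline{\C}$, as desired.

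The argument is essentially formal, so I expect no serious obstacle; the one point demanding care is the bookkeeping of variance and of which quotient each object lives in. Concretely, one must check that it is $\E(-,M)$ — not $\E(M,-)$ — that kills $\s$-projective morphisms, so that the input equivalence in $\underline{\C}$ really does induce an isomorphism of the functors $D\E(X,-)$ and $D\E(Y,-)$; and that the representing object of a functor on $\overline{\C}$ is unique only up to isomorphism \emph{in} $\overline{\C}$, which is exactly why the output equivalence lands in the costable category rather than in $\C$ itself. For part (2) these roles swap: there $\E(M,-)$ annihilates $\s$-injective morphisms, $\tau^- X$ represents $D\E(-,X)$ on $\underline{\C}$, and the input equivalence in $\overline{\C}$ produces the conclusion $\tau^- X\simeq\tau^- Y$ in $\underline{\C}$.
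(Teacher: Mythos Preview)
Your proof is correct and follows exactly the same route as the paper: use that $X\simeq Y$ in $\underline{\C}$ forces $D\E(X,-)\simeq D\E(Y,-)$, splice this with the representing isomorphisms $\phi_X,\phi_Y$ to obtain $\overline{\C}(-,\tau X)\simeq\overline{\C}(-,\tau Y)$, and conclude via Yoneda. The paper's own proof is a one-line version of what you wrote, invoking the isomorphism $D\E(X,-)\simeq D\E(Y,-)$ (already noted in the proof of Proposition~\ref{lemma:closed under iso}) and then the Yoneda lemma; your additional bookkeeping about which ideal is annihilated is accurate and simply makes that step explicit.
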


\begin{proof}
(1) We observe that $\Cco(-,\tau X)\simeq\Cco(-,\tau Y)$, since they are both isomorphic to $D\E(X,-)\simeq D\E(Y,-)$.
Then the assertion follows from the Yoneda's lemma.

(2) It is dual to (1).
\end{proof}

We denote by $\underline{\C_r}$ the image of $\C_r$ under the canonical functor $\C\to\underline\C$,
and by $\overline{\C_l}$ the image of $\C_l$ under the canonical functor $\C\to\overline\C$. Then we have

\begin{proposition}\label{lemma:Y_simeq_tau^- tau Y}
\begin{enumerate}
\item[]
\item[$(1)$] $\tau$ induces a functor from $\underline{\C_r}$ to $\overline{\C_l}$.
\item[$(2)$] $\tau^-$ induces a functor from $\overline{\C_l}$ to $\underline{\C_r}$.
\end{enumerate}
Moreover, we have
\begin{enumerate}
\item[$(3)$] If $Y\in\C_r$, then $Y\simeq\tau^-\tau Y$ in $\underline{\C_r}$.
\item[$(4)$] If $Y\in\C_l$, then  $Y\simeq\tau\tau^-Y$ in $\overline{\C_l}$.
\end{enumerate}
\end{proposition}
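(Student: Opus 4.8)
The plan is to prove parts (1) and (3), since (2) and (4) follow by the evident duality (interchanging $\E(X,-)$ with $\E(-,X)$, $\Cco$ with $\Ccu$, and $\tau$ with $\tau^-$). The engine of the argument is Lemma~\ref{lemma:representable}, which converts a single almost split $\s$-triangle into the two mutually dual representability statements we need at once. So first I would settle the case of an indecomposable $Y\in\C_r$. If $Y$ is $\s$-projective, then $\E(Y,-)=0$, so $D\E(Y,-)=0\simeq\Cco(-,0)$ and we may take $\tau Y=0\in\C_l$; since $\Id_Y\in\CP$ gives $Y\simeq 0$ in $\underline\C$ and $\tau^-\tau Y=\tau^-0\simeq 0$, both the membership $\tau Y\in\C_l$ and the isomorphism $Y\simeq\tau^-\tau Y$ in $\underline{\C_r}$ hold trivially. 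If $Y$ is non-$\s$-projective, then by Proposition~\ref{prop:C_r and C_l description}(1) there is an almost split $\s$-triangle $\xymatrix@C=0.5cm{X\ar[r]&E\ar[r]&Y\ar@{-->}[r]^\eta&}$; choosing $\gamma$ with $\gamma(\eta)\neq 0$, Lemma~\ref{lemma:representable}(1) gives $\Cco(-,X)\simeq D\E(Y,-)$, so $\tau Y\simeq X$ in $\overline\C$, while Lemma~\ref{lemma:representable}(2) gives $\Ccu(Y,-)\simeq D\E(-,X)$. The latter shows $D\E(-,X)$ is representable, i.e. $X\in\C_l$, hence $\tau Y\in\C_l$; and it identifies $\tau^-X\simeq Y$ in $\underline\C$, whence $\tau^-\tau Y\simeq\tau^-X\simeq Y$ in $\underline{\C_r}$.

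For a general $Y\in\C_r$ I would pass to a Krull--Schmidt decomposition $Y=\bigoplus_i Y_i$ into indecomposables; each $Y_i\in\C_r$ because $\C_r$ is closed under direct summands. Since $\E$ is biadditive and $D$ additive, $D\E(Y,-)=\bigoplus_i D\E(Y_i,-)\simeq\bigoplus_i\Cco(-,\tau Y_i)=\Cco(-,\bigoplus_i\tau Y_i)$, so Yoneda in $\overline\C$ yields $\tau Y\simeq\bigoplus_i\tau Y_i$ in $\overline\C$. As each $\tau Y_i\in\C_l$ and $\C_l$ is additive, $\bigoplus_i\tau Y_i\in\C_l$, and then $\tau Y\in\C_l$ by Proposition~\ref{lemma:closed under iso}(2). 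Applying $\tau^-$ (defined on $\tau Y$ since $\tau Y\in\C_l$) and using Lemma~\ref{lemma:tau send iso to iso}(2) together with the same additivity bookkeeping, I get $\tau^-\tau Y\simeq\tau^-\big(\bigoplus_i\tau Y_i\big)\simeq\bigoplus_i\tau^-\tau Y_i\simeq\bigoplus_i Y_i=Y$ in $\underline\C$, which proves (3) in general.

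It remains to upgrade $\tau$ from an object assignment to a functor. Given $f\in\C(Y,Y')$ with $Y,Y'\in\C_r$, the map $D\E(f,-)\colon D\E(Y,-)\to D\E(Y',-)$ transports, via the defining isomorphisms $\phi_Y,\phi_{Y'}$, to a natural transformation $\phi_{Y'}^{-1}\circ D\E(f,-)\circ\phi_Y\colon\Cco(-,\tau Y)\to\Cco(-,\tau Y')$, which by the Yoneda lemma in $\overline\C$ is induced by a unique $\tau(\underline f)\in\overline\C(\tau Y,\tau Y')$. If $f$ is $\s$-projective then $\E(f,-)=0$, hence $D\E(f,-)=0$ and $\tau(\underline f)=0$, so $\tau(\underline f)$ depends only on $\underline f$ and $\tau$ is well defined on $\underline{\C_r}$. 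Functoriality ($\tau(\overline{\Id})=\overline{\Id}$ and compatibility with composition) is immediate from the functoriality of $Y\mapsto D\E(Y,-)$ and the naturality of the Yoneda correspondence, and Lemma~\ref{lemma:tau send iso to iso}(1) already records that $\tau$ respects isomorphisms. I expect the only genuine obstacle to be the object-level claim $\tau Y\in\C_l$: this is exactly the assertion that the first term of an almost split $\s$-triangle again carries an almost split $\s$-triangle starting at it, and it is precisely the two-sided output of Lemma~\ref{lemma:representable} that makes it go through; everything else is routine Yoneda and additivity bookkeeping.
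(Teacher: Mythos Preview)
Your proof is correct and follows essentially the same route as the paper's: reduce to indecomposable non-$\s$-projective $Y$, invoke Proposition~\ref{prop:C_r and C_l description}(1) to obtain an almost split $\s$-triangle, apply both halves of Lemma~\ref{lemma:representable} to get $\tau Y\simeq X\in\C_l$ and $\tau^- X\simeq Y$, then pass to $\tau^-\tau Y\simeq Y$ via Lemma~\ref{lemma:tau send iso to iso}(2) and Proposition~\ref{lemma:closed under iso}(2); finally define $\tau$ on morphisms by Yoneda and observe it kills $\s$-projective maps. The only cosmetic difference is that you spell out the $\s$-projective base case and the passage to general $Y$ via a Krull--Schmidt decomposition, whereas the paper leaves these implicit in the sentence ``We may assume that $Y$ is indecomposable and non-$\s$-projective.''
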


\begin{proof}
(1) Let $Y\in \C_r$. We may assume that $Y$ is indecomposable and non-$\s$-projective. By Lemma~\ref{prop:C_r and C_l description}(1),
there exists an almost split $\s$-triangle
$$\xymatrix@C=15pt{X\ar[r]& E\ar[r]& Y\ar@{-->}[r]^\eta&}.$$
Then we have $\Cco(-,X)\simeq D\E(Y,-)$ and $\Ccu(Y,-)\simeq D\E(-,X)$ by Lemma~\ref{lemma:representable}.
We then obtain $X\in\C_l$. It follows from the Yoneda's lemma that $\tau Y\simeq X$ in $\overline\C$, and $\tau^-X\simeq Y$
in $\underline\C$. So $\tau Y\in\C_l$ by Proposition~\ref{lemma:closed under iso}(2), and hence
$\tau^-\tau Y\simeq\tau^-X\simeq Y$ in $\underline\C$.
Here, the first isomorphism follows from Lemma~\ref{lemma:tau send iso to iso}(2).

For each morphism $f\colon Y\to Y'$ in $\C_r$, we define the morphism $\tau(f)\colon\tau Y\to \tau Y'$ in $\overline{\C_l}$
such that the following diagram commutes
\[\xymatrix{
  \Cco(-,\tau Y)\ar[r]^-{\phi_Y}\ar[d]_-{\Cco(-,\tau(f))}
    &D\E(Y ,-)\ar[d]^-{D\E(f,-)}\\
  \Cco(-,\tau Y')\ar[r]^-{\phi_Y'}
    &D\E(Y',-).
}\]
Here, the existence and uniqueness of $\tau(f)$ are guaranteed by the Yoneda's lemma.
Then it follows that $\tau$ is a functor from $\C_r$ to $\overline{\C_l}$. Moreover, if $f$ is $\s$-projective,
then $D\E(f,-)=0$ and thus $\tau(f)=0$ in $\overline{\C_l}$. Thus $\tau$ induces a functor from
$\underline{\C_r}$ to $\overline{\C_l}$ which we still denote by $\tau$.

(2) Similarly, we have a functor $\tau^-\colon\overline{\C_l}\to\underline{\C_r}$. For each $\overline g\colon X\to X'$ in $\overline{\C_l}$,
the morphism $\tau^-(\overline g)\colon \tau^-X\to\tau^-X'$ is given by the following commutative diagram
\[\xymatrix{
  \Ccu(\tau^-X',-)\ar[r]^-{\psi_{X'}}\ar[d]_-{\Ccu(\tau^-(\overline g),-)}
    &D\E(-,X')\ar[d]^-{D\E(-,g)}\\
  \Ccu(\tau^-X,-)\ar[r]^-{\psi_X}
    &D\E(-,X).
}\]

(3) Since $\tau Y\simeq X$ in $\overline\C$, we have  $\tau^-\tau Y\simeq\tau^-X\simeq Y$ in $\underline\C$ by
Lemma~\ref{lemma:tau send iso to iso}(2). Of course, $\tau^-\tau Y\simeq Y$ in $\underline{\C_r}$.

(4) It is similar to (3).
\end{proof}

For each $Y\in\underline{\C_r}$, set
\[\underline{\theta_Y}:=\psi_{\tau Y,Y}^{-1}(\phi_{Y,\tau Y}(\overline{\Id_{\tau Y}}))\colon \tau^-\tau Y\longrightarrow Y\]
in $\underline{\C_r}$. Dually, for each $X\in\overline{\C_l}$, set
\[\overline{\xi_X}:=\phi_{\tau^-X,X}^{-1} (\psi_{X,\tau^-X}(\underline{\Id_{\tau^-X}}))\colon X\longrightarrow \tau\tau^-X\]
in $\overline{\C_l}$.
The following result shows that the functors $\tau$ and $\tau^-$ are mutually quasi-inverse equivalences between
$\underline{\C_r}$ and $\overline{\C_l}$.

\begin{theorem}\label{prop:quasi-inverse}
Under the above definitions,
$$\underline\theta: \tau^-\tau\rightarrow \mbox{Id}_{\underline{\C_r}}\ and\
\overline\xi:\mbox{Id}_{\underline{\C_l}}\rightarrow\tau\tau^-$$ are both natural isomorphisms.
This implies that the functors $\tau$ and $\tau^-$ are quasi-inverse to each other.
\end{theorem}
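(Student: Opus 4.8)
The plan is to prove both assertions for $\underline{\theta}$ in detail and deduce the one for $\overline{\xi}$ by the evident duality; the ``quasi-inverse'' conclusion is then immediate, since natural isomorphisms $\tau^-\tau\cong\Id_{\underline{\C_r}}$ and $\Id_{\overline{\C_l}}\cong\tau\tau^-$ are exactly what is required. First I would record that $\tau$ and $\tau^-$ are additive: biadditivity of $\E$ gives $D\E(Y\oplus Y',-)\cong D\E(Y,-)\oplus D\E(Y',-)$, so $\Cco(-,\tau(Y\oplus Y'))\cong\Cco(-,\tau Y\oplus\tau Y')$ and hence $\tau(Y\oplus Y')\simeq\tau Y\oplus\tau Y'$ in $\overline{\C}$ (dually for $\tau^-$). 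Since $\underline{\theta}$ is then a natural transformation of additive endofunctors on the Krull--Schmidt category $\underline{\C_r}$, to see that every component $\underline{\theta_Y}$ is an isomorphism it suffices to treat $Y$ indecomposable. If $Y$ is $\s$-projective, then $\Id_Y$ is an $\s$-projective morphism, so $Y\simeq 0$ in $\underline{\C}$ and both sides of $\underline{\theta_Y}$ vanish; thus the only case carrying content is $Y$ indecomposable and non-$\s$-projective.

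For such a $Y$, Proposition~\ref{prop:C_r and C_l description}(1) supplies an almost split $\s$-triangle $X\to E\to Y$ with defining $\E$-extension $\eta\in\E(Y,X)$, and I fix $\gamma\in D\E(Y,X)$ with $\gamma(\eta)\neq 0$. The point is that the representing datum $(\tau Y,\phi_Y)$, and likewise $(\tau^-\tau Y,\psi_{\tau Y})$, are only required to be \emph{some} choice, while the property of $\underline{\theta_Y}$ being an isomorphism is invariant under changing that choice: two choices differ by stable isomorphisms of the representing objects (Yoneda's lemma), under which $\underline{\theta_Y}$ changes only by composition with isomorphisms. I am therefore free to compute with the data of Lemma~\ref{lemma:representable}, which yields isomorphisms $\phi_{Y,-}\colon\Cco(-,X)\to D\E(Y,-)$ and $\psi_{X,-}\colon\Ccu(Y,-)\to D\E(-,X)$ with the key identities $\gamma=\phi_{Y,X}(\overline{\Id_X})=\psi_{X,Y}(\underline{\Id_Y})$. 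Taking $\tau Y:=X$, $\phi_Y:=\phi_{Y,-}$, $\tau^-\tau Y:=Y$ and $\psi_{\tau Y}:=\psi_{X,-}$, the defining formula collapses:
\[\underline{\theta_Y}=\psi_{\tau Y,Y}^{-1}\bigl(\phi_{Y,\tau Y}(\overline{\Id_{\tau Y}})\bigr)=\psi_{X,Y}^{-1}(\gamma)=\underline{\Id_Y},\]
which is visibly an isomorphism. Hence every component of $\underline{\theta}$ is an isomorphism.

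It remains to verify that $\underline{\theta}$ is natural, which is not automatic because the components were defined pointwise. Given $\underline{f}\colon Y\to Y'$ in $\underline{\C_r}$, I must show $\underline{f}\circ\underline{\theta_Y}=\underline{\theta_{Y'}}\circ\tau^-\tau(\underline{f})$ in $\underline{\C}$. The plan is a Yoneda diagram chase: unwind the defining square of $\tau(\underline{f})$, namely $D\E(f,-)\circ\phi_Y=\phi_{Y'}\circ\Cco(-,\tau(\underline{f}))$, together with the analogous square defining $\tau^-\tau(\underline{f})$ through $\psi_{\tau Y}$ and $\psi_{\tau Y'}$; then evaluate the resulting transformations on the identity elements $\overline{\Id_{\tau Y}}$ and $\underline{\Id}$, and read off the two composites via $\gamma_Y=\phi_{Y,\tau Y}(\overline{\Id_{\tau Y}})$ and $\underline{\theta_Y}=\psi_{\tau Y,Y}^{-1}(\gamma_Y)$. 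Naturality of $\phi_Y,\phi_{Y'},\psi_{\tau Y},\psi_{\tau Y'}$ should then identify both sides. The dual construction gives that $\overline{\xi}$ is a natural isomorphism $\Id_{\overline{\C_l}}\to\tau\tau^-$, and combined with the first this exhibits $\tau$ and $\tau^-$ as mutually quasi-inverse. I expect this final naturality chase---keeping straight the four Yoneda correspondences and the fact that $D\E(-,\tau Y)$ is covariant while $D\E(Y,-)$ is contravariant---to be the main technical obstacle, whereas the isomorphism statement reduces, pleasingly, to the identity $\underline{\theta_Y}=\underline{\Id_Y}$.
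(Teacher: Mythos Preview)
Your proposal is correct, and for the isomorphism step it takes a genuinely different route from the paper. The paper works with the \emph{fixed} choices of $(\tau Y,\phi_Y)$ and $(\tau^-\tau Y,\psi_{\tau Y})$ throughout: it introduces auxiliary elements $\alpha=\psi_{\tau Y,\tau^-\tau Y}(\underline{\Id_{\tau^-\tau Y}})$ and $\beta=\phi_{Y,\tau Y}(\overline{\Id_{\tau Y}})$, compares them with the $\gamma$ coming from an almost split $\s$-triangle $X\to E\to Y\overset{\eta}{\dashrightarrow}$ via a morphism $s\colon X\to\tau Y$, and obtains $\beta'(\eta)=\alpha(s_\star(\theta_Y^{\,\star}\eta))\neq 0$. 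This forces the pulled-back extension $\theta_Y^{\,\star}\eta$ to be non-split, and the almost split property of $\eta$ then shows that $\theta_Y$ is a retraction in $\C$; combined with $\tau^-\tau Y\simeq Y$ in $\underline\C$ (already established in Proposition~\ref{lemma:Y_simeq_tau^- tau Y}(3)), this yields the isomorphism. Your approach instead exploits that $\underline{\theta_Y}$ being an isomorphism is invariant under replacing the representing data by isomorphic ones, and then chooses the specific data supplied by Lemma~\ref{lemma:representable}, for which the identity $\gamma=\phi_{Y,X}(\overline{\Id_X})=\psi_{X,Y}(\underline{\Id_Y})$ collapses the definition to $\underline{\theta_Y}=\underline{\Id_Y}$. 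This is shorter and avoids the contradiction argument via the almost split property; the price is that the invariance-under-change-of-data step must be justified (your one-line Yoneda remark is adequate but would benefit from being spelled out). Two minor points: your reduction to indecomposables does not actually need naturality of $\underline\theta$---it follows from the same invariance argument together with the additivity you record---so the sentence invoking ``natural transformation of additive endofunctors'' should be rephrased; and your naturality sketch is exactly the two-diagram Yoneda chase the paper carries out, so once you execute it there is no remaining obstacle.
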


\begin{proof} We only prove that $\underline\theta: \tau^-\tau\rightarrow \mbox{Id}_{\underline{\C_r}}$
is a natural isomorphism, the other is similar.

We first prove that $\underline\theta: \tau^-\tau\rightarrow \mbox{Id}_{\underline{\C_r}}$ is a natural transformation.
Indeed, for each $\underline f\colon Y\to Y'$ in $\underline{\C_r}$, consider the following diagram
  \[\xymatrix@+1.5em{
    \Cco(\tau Y,\tau Y)\ar[r]^-{\phi_{Y,\tau Y}}\ar[d]_-{\Cco(\tau Y,\tau(\underline f))}
      &D\E(Y,\tau Y)\ar[d]_-{D\E(f,\tau Y)}
      &\Ccu(\tau^-\tau Y,Y)\ar[l]_-{\psi_{\tau Y,Y}}\ar[d]_-{\Ccu(\tau^-\tau Y,\underline f)}\\
    \Cco(\tau Y,\tau Y')\ar[r]^-{\phi_{Y',\tau Y}}
      &D\E(Y',\tau Y)
      &\Ccu(\tau^-\tau Y,Y')\ar[l]_-{\psi_{\tau Y,Y'}}.
  }\]
The left square commutes by the definition of $\tau(\underline f)$, and the right square commutes since the isomorphism
$\psi_{\tau Y}$ is natural. By a diagram chasing, we have
\[\tau(\underline f)=\phi_{Y',\tau Y}^{-1}(\psi_{\tau Y,Y'}(\underline f\circ\underline{\theta_Y})).\]

We have the following commutative diagram
\[\xymatrix@+1.5em{
    \Cco(\tau Y',\tau Y')\ar[r]^-{\phi_{Y',\tau Y'}}\ar[d]_-{\Cco(\tau(\underline f),\tau Y')}
      &D\E(Y',\tau Y')\ar[d]_-{D\E(Y',\tau(\underline f))}
      &\Ccu(\tau^-\tau Y',Y')\ar[l]_-{\psi_{\tau Y',Y'}}\ar[d]_-{\Ccu(\tau^-\tau(\underline f),Y')}\\
    \Cco(\tau Y,\tau Y')\ar[r]^-{\phi_{Y',\tau Y}}
      &D\E(Y',\tau Y)
      &\Ccu(\tau^-\tau Y,Y')\ar[l]_-{\psi_{\tau Y,Y'}}.
  }\]
The right square commutes by the definition of $\tau^-\tau(\underline f)$. By a diagram chasing, we have
\[\tau(\underline f)=\phi_{Y',\tau Y}^{-1}(\psi_{\tau Y,Y'}(\underline{\theta_{Y'}}\circ\tau^-\tau(\underline f))).\]
We then obtain $\underline f\circ\underline{\theta_Y}=\underline{\theta_{Y'}}\circ\tau^-\tau(\underline f)$.
It follows that $\underline\theta$ is a natural transformation.

Next we prove that $\underline{\theta_Y}$ is an isomorphism for each $Y\in\C_r$. We may assume that $Y$ is
indecomposable and non-$\s$-projective in $\C$. Let $\alpha=\psi_{\tau Y,\tau^-\tau Y}(\underline{\Id_{\tau^-\tau Y}})$
in $D\E(\tau^-\tau Y,\tau Y)$ and let $\beta=\phi_{Y,\tau Y}(\overline{\Id_{\tau Y}})$ in $D\E(Y,\tau Y)$.
By the definition of $\underline{\theta_Y}$, we have $\beta = \psi_{\tau Y,Y} (\underline{\theta_Y})$.
Consider the following commutative diagram
  \[\xymatrix@C+1em{
    \Ccu(\tau^-\tau Y,\tau^-\tau Y)\ar[r]^-{\psi_{\tau Y,\tau^-\tau Y}}\ar[d]_-{\Ccu(\tau^-\tau Y,\underline{\theta_Y})}
      &D\E(\tau^-\tau Y,\tau Y)\ar[d]^-{D\E(\theta_Y,\tau Y)}\\
    \Ccu(\tau^-\tau Y,Y)\ar[r]^-{\psi_{\tau Y,Y}}
      &D\E(Y,\tau Y).
  }\]
Since $\psi_{\tau Y,\tau^-\tau Y}(\underline{\Id_{\tau^-\tau Y}})=\alpha$ and
$\Ccu(\tau^-\tau Y,\underline{\theta_Y})(\underline{\Id_{\tau^-\tau Y}})=\underline{\theta_Y}$, we have
\[\beta = D\E(\theta_Y,\tau Y)(\alpha) = \alpha\circ\E(\theta_Y,\tau Y).\]
By Lemma~\ref{prop:C_r and C_l description}(1), there exists an almost split $\s$-triangle
$$\xymatrix@C=0.5cm{X\ar[r]& E\ar[r]& Y\ar@{-->}[r]^{\eta}&}.$$
By Lemma~\ref{lemma:representable}(1), we have a natural isomorphism $\phi'\colon\Cco(-,X)\to D\E(Y,-)$
such that $\phi'_X(\overline{\Id_X})(\eta)$ $\ne 0$. Setting $\beta':=\phi'_X(\overline{\Id_X})$,
we have $\beta'(\eta)\neq 0$. By the Yoneda's lemma, there exists some $s\colon X\to\tau Y$ such that
$\Cco(-,s) = \phi_Y^{-1}\circ\phi'$. We obtain
  \[\beta'=\phi'_X(\overline{\Id_X})=(\phi_{Y,X}\circ\Cco(X,s))(\overline{\Id_X})=\phi_{Y,X}(\overline s).\]
Consider the following commutative diagram
  \[\xymatrix{
    \Cco(\tau Y,\tau Y)\ar[r]^-{\phi_{Y,\tau Y}}\ar[d]_-{\Cco(s,\tau Y)}
      &D\E(Y ,\tau Y)\ar[d]^-{D\E(Y,s)}\\
    \Cco(X,\tau Y)\ar[r]^-{\phi_{Y,X}}
      &D\E(Y,X).
  }\]
Since $\phi_{Y,\tau Y}(\overline{\Id_{\tau Y}})=\beta$ and $\Cco(s,\tau Y)(\overline{\Id_{\tau Y}})=\overline s$, we have
\[\beta' = D\E(Y,s)(\beta) = \beta\circ\E(Y,s)=\alpha\circ\E(\theta_Y,\tau Y)\circ\E(Y,s).\]
Thus we have
\[0\not = \beta'(\eta) = \alpha({\theta_Y}^\star(s_\star\eta)) = \alpha(s_\star({\theta_Y}^\star\eta)),\]
which implies that the $\s$-triangle
$$\xymatrix@C=0.5cm{X\ar[r]& E'\ar[r]& \tau^-\tau Y\ar@{-->}[r]^{\ \ \ {\theta_Y}^\star\eta}&}$$
is non-split. We claim that $\theta_Y\colon\tau^-\tau Y\to Y$ is a retraction in $\C$. Otherwise,
suppose that $\theta_Y\colon\tau^-\tau Y\to Y$ is not a retraction in $\C$. Since $\eta$ is almost split,
we have the following commutative diagram
   \[\xymatrix{
    X\ar[r]\ar@{=}[d] &E'\ar[r]\ar[d] &\tau^-\tau Y\ar@{-->}[r]^{{\theta_Y}^\star\eta}\ar[d]\ar@{.>}[ld]&\\
    X\ar[r]          &E \ar[r]       &Y\ar@{-->}[r]^{\eta}&.
  }\]
By Lemma \ref{factor}, the top $\s$-triangle is split, which is a contradiction.
Thus $\underline{\theta_Y}$ is an isomorphism in $\underline\C$ since we have already known $\tau^-\tau Y\simeq Y$
in $\underline\C$.
\end{proof}

The following proposition shows that the pair of functors $(\tau^-,\tau)$ forms an adjoint pair
with unit $\xi$ and counit $\theta$.

\begin{proposition}\label{adjoint}
We have
\begin{enumerate}
\item[$(1)$] $\tau(\underline{\theta_Y})\circ\overline{\xi_{\tau Y}}=\overline{\Id_{\tau Y}}$ for each $Y\in\underline{\C_r}$.
\item[$(2)$] $\underline{\theta_{\tau^-X}}\circ\tau^-(\overline{\xi_X})=\underline{\Id_{\tau^-X}}$ for each $X\in\overline{\C_l}$.
\end{enumerate}
\end{proposition}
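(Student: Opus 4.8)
The plan is to prove (1) by unwinding the definitions of $\tau(\underline{\theta_Y})$ and $\overline{\xi_{\tau Y}}$ and then cancelling a single component of $\phi$; statement (2) will follow by the mirror-image argument. Fix $Y\in\underline{\C_r}$. Recall from the proof of Proposition~\ref{lemma:Y_simeq_tau^- tau Y}(1) that $\tau Y\in\C_l$ and $\tau^{-}\tau Y\in\C_r$, so all the symbols below are defined. Write $\alpha:=\psi_{\tau Y,\tau^{-}\tau Y}(\underline{\Id_{\tau^{-}\tau Y}})\in D\E(\tau^{-}\tau Y,\tau Y)$ and $\beta:=\phi_{Y,\tau Y}(\overline{\Id_{\tau Y}})\in D\E(Y,\tau Y)$, exactly as in the proof of Theorem~\ref{prop:quasi-inverse}.

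First I would apply the defining commutative square of the functor $\tau$ to the morphism $\underline{\theta_Y}\colon\tau^{-}\tau Y\to Y$; it reads $\phi_{Y}\circ\Cco(-,\tau(\underline{\theta_Y}))=D\E(\theta_Y,-)\circ\phi_{\tau^{-}\tau Y}$ as an identity of natural transformations in the free variable. Evaluating this identity at the object $\tau Y$ and applying both sides to the element $\overline{\xi_{\tau Y}}\in\Cco(\tau Y,\tau\tau^{-}\tau Y)$ yields
\[\phi_{Y,\tau Y}\bigl(\tau(\underline{\theta_Y})\circ\overline{\xi_{\tau Y}}\bigr)=D\E(\theta_Y,\tau Y)\bigl(\phi_{\tau^{-}\tau Y,\tau Y}(\overline{\xi_{\tau Y}})\bigr).\]
The key bookkeeping step is to recognize the inner term on the right. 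By the very definition $\overline{\xi_{\tau Y}}=\phi_{\tau^{-}\tau Y,\tau Y}^{-1}(\psi_{\tau Y,\tau^{-}\tau Y}(\underline{\Id_{\tau^{-}\tau Y}}))$, so $\phi_{\tau^{-}\tau Y,\tau Y}(\overline{\xi_{\tau Y}})=\psi_{\tau Y,\tau^{-}\tau Y}(\underline{\Id_{\tau^{-}\tau Y}})=\alpha$, and hence the right-hand side collapses to $D\E(\theta_Y,\tau Y)(\alpha)$.

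Next I would identify $D\E(\theta_Y,\tau Y)(\alpha)$ with $\beta$. This is precisely the naturality of $\psi_{\tau Y,-}\colon\Ccu(\tau^{-}\tau Y,-)\to D\E(-,\tau Y)$ applied to $\theta_Y\colon\tau^{-}\tau Y\to Y$, combined with the defining relation $\psi_{\tau Y,Y}(\underline{\theta_Y})=\phi_{Y,\tau Y}(\overline{\Id_{\tau Y}})$ of $\theta_Y$; it is exactly the identity $\beta=D\E(\theta_Y,\tau Y)(\alpha)$ already recorded in the proof of Theorem~\ref{prop:quasi-inverse}. Chaining the last displays gives $\phi_{Y,\tau Y}(\tau(\underline{\theta_Y})\circ\overline{\xi_{\tau Y}})=\beta=\phi_{Y,\tau Y}(\overline{\Id_{\tau Y}})$, and since $\phi_{Y,\tau Y}$ is an isomorphism we conclude $\tau(\underline{\theta_Y})\circ\overline{\xi_{\tau Y}}=\overline{\Id_{\tau Y}}$, which is (1).

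For (2) I would run the dual argument verbatim: apply the defining square of $\tau^{-}$ to $\overline{\xi_X}\colon X\to\tau\tau^{-}X$, evaluate at the object $\tau^{-}X$ and apply both sides to $\underline{\theta_{\tau^{-}X}}$, use the definition of $\overline{\xi_X}$ to collapse the resulting inner term to $\psi_{X,\tau^{-}X}(\underline{\Id_{\tau^{-}X}})$, and finally cancel the isomorphism $\psi_{X,\tau^{-}X}$. The only genuine obstacle is the two-variable bookkeeping: one must keep straight which slot of $\E$ and which of the four index objects each instance of $\phi$ and $\psi$ sits over, and in particular notice that evaluating the definition of $\tau(\underline{\theta_Y})$ at the object $\tau Y$ is exactly what forces $\phi_{\tau^{-}\tau Y,\tau Y}(\overline{\xi_{\tau Y}})$ to become the element $\alpha$ featuring in the counit relation. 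Once the indices are aligned, no further computation is needed.
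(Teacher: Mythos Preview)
Your proof is correct and follows essentially the same approach as the paper. The paper packages the two commutative squares (the defining square of $\tau$ applied to $\underline{\theta_Y}$, and the naturality square of $\psi_{\tau Y}$) into one combined diagram and chases $\underline{\Id_{\tau^-\tau Y}}$ and $\overline{\xi_{\tau Y}}$ through it, whereas you process the same two squares sequentially and invoke the identity $\beta=D\E(\theta_Y,\tau Y)(\alpha)$ already recorded in the proof of Theorem~\ref{prop:quasi-inverse}; the content is identical.
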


\begin{proof}
We only prove (1). Consider the following  diagram
\[\xymatrix@+1em{
    \Cco(\tau Y,\tau\tau^-\tau Y)\ar[r]^-{\phi_{\tau^-\tau Y,\tau Y}}\ar[d]_-{\Cco(\tau Y,\tau(\underline{\theta_Y}))}
      &D\E(\tau^-\tau Y,\tau Y)\ar[d]_-{D\E(\theta_Y,\tau Y)}
      &\Ccu(\tau^-\tau Y,\tau^-\tau Y)\ar[l]_-{\psi_{\tau Y,\tau^-\tau Y}}\ar[d]_-{\Ccu(\tau^-\tau Y, \underline{\theta_Y})}\\
    \Cco(\tau Y,\tau Y)\ar[r]^-{\phi_{Y,\tau Y}}
      &D\E(Y,\tau Y)
      &\Ccu(\tau^-\tau Y,Y)\ar[l]_-{\psi_{\tau Y,Y}}.
  }\]
The left square commutes by the definition of $\tau(\underline{\theta_Y})$, and the right square commutes since
$\psi_{\tau Y}$ is natural. By the definitions of $\underline{\theta_Y}$ and $\overline{\xi_{\tau Y}}$, we have
\[\phi_{Y,\tau Y}^{-1}(\psi_{\tau Y,Y}(\underline{\theta_Y}))=\overline{\Id_{\tau Y}},\]
\[\phi_{\tau^-\tau Y,\tau Y}^{-1}(\psi_{\tau Y,\tau^-\tau Y}(\underline{\Id_{\tau^-\tau Y}}))=\overline{\xi_{\tau Y}}.\]
By the above commutative diagram, we have
\[\overline{\Id_{\tau Y}}=\Cco(\tau Y,\tau(\underline{\theta_Y}))(\overline{\xi_{\tau Y}})=
\tau(\underline{\theta_Y})\circ\overline{\xi_{\tau Y}}.\qedhere\]
\end{proof}

\section{Characterizing the subcategory $\C_r$ via morphisms determined by objects}

\subsection{$\s$-deflations determined by objects}

We first recall the concept of morphisms being determined by objects, which was introduced by Auslander in \cite{Au78} and closely related to
the Auslander-Reiten theory \cite{Au95} and Auslander bijections \cite{Ri13}. 

\begin{definition} (\cite{Au78,Ri13})
Let $f\in \C(X, Y)$ and $C\in\C$. We call $f$ \emph{right $C$-determined} (or \emph{right determined} by $C$)
and call $C$ a \emph{right determiner} of $f$, if the following condition is satisfied: each
$g\in \C(T, Y)$ factors through $f$, provided that for each $h\in\C(C, T)$ the morphism $g\circ h$ factors through $f$.

If moreover $C$ is a direct summand of any right determiner of $f$, we call $C$ a \emph{minimal right determiner} of $f$.
\end{definition}

\begin{lemma}\label{lem:det.PB}
Consider a morphism of $\s$-triangles as follows:
\[
\xymatrix{
X\ar[r]^{e'}\ar@{=}[d]&Y'\ar[r]^{f'}\ar[d]^{g'}&Z'\ar@{-->}[r]^{\delta'}\ar[d]^g&\\
X\ar[r]^e&Y\ar[r]^{f}&Z\ar@{-->}[r]^{\delta}&.
}
\]
If $f$ is right $C$-determined for some object $C$, then $f'$ is also right $C$-determined.
\end{lemma}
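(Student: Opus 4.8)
The plan is to unwind the definition of right $C$-determinedness for $f'$ by pulling back test morphisms along the given morphism of $\s$-triangles and transferring the factorization hypothesis from $f'$ to $f$. Concretely, suppose $g_0\in\C(T,Z')$ has the property that for every $h\in\C(C,T)$ the composite $g_0\circ h$ factors through $f'$. I want to show $g_0$ factors through $f'$. First I would consider the composite $g\circ g_0\colon T\to Z$, where $g\colon Z'\to Z$ is the vertical map on the right of the diagram. The key observation is that if $g_0\circ h$ factors through $f'$, then $g\circ g_0\circ h$ factors through $g\circ f'=f\circ g'$, hence through $f$. So the morphism $g\circ g_0$ satisfies the hypothesis that defines right $C$-determinedness of $f$, and since $f$ is right $C$-determined, I conclude that $g\circ g_0$ factors through $f$, say $g\circ g_0=f\circ t$ for some $t\colon T\to Y$.

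The heart of the argument is then to manufacture, out of this factorization through $f$, an actual factorization of $g_0$ through $f'$. Here I would use that the left square of the diagram is (up to the inflation $e'=e$) a pullback-type square coming from the morphism $(\mathrm{Id}_X,g)$ of $\s$-triangles: the top row realizes $g^\star\delta$ while the bottom realizes $\delta$. The plan is to produce the difference $g_0-(\text{candidate factorization})$ and show it is killed appropriately. Explicitly, from $g\circ g_0=f\circ t$ and the commutativity $g\circ f'=f\circ g'$, I would examine the morphism $g_0$ together with $t$ and attempt to lift via the universal property encoded in (ET3)$^{\mathrm{op}}$ / Lemma \ref{factor}: I expect that the relation $g\circ(g_0-f'\circ(\text{something}))=0$ combined with the exactness of the rows lets me correct $t$ so that $g_0$ itself factors through $f'$.

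The main obstacle I anticipate is the last lifting step: passing from ``$g\circ g_0$ factors through $f$'' to ``$g_0$ factors through $f'$'' is not automatic, because $g$ need not be monic and the rows are $\s$-triangles rather than short exact sequences of abelian groups. The precise tool I expect to need is Lemma \ref{factor}, applied to a suitably constructed morphism of $\s$-triangles, to convert a statement about a composite factoring through an $\s$-inflation into the splitting of an associated $\E$-extension, and thereby to realize the desired factorization of $g_0$. I would set up the morphism of $\s$-triangles whose top row is the pullback of $\delta'$ (or of $g^\star\delta$) along $g_0$, and then read off that the vanishing of the relevant pulled-back extension---forced by the factorization of $g\circ g_0$ through $f$ together with the hypothesis on $C$-tested composites---yields that $g_0$ splits off appropriately, i.e.\ factors through $f'$.

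In summary, the structure is: (i) take an arbitrary $g_0\in\C(T,Z')$ whose $C$-composites factor through $f'$; (ii) push forward along $g$ to get $g\circ g_0$, verify its $C$-composites factor through $f$, and invoke the right $C$-determinedness of $f$; (iii) use the morphism of $\s$-triangles and Lemma \ref{factor} to descend the resulting factorization of $g\circ g_0$ through $f$ back to a factorization of $g_0$ through $f'$. Step (iii) is where the genuine work lies, and I would treat the reduction to Lemma \ref{factor} as the crux.
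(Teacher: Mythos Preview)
Your proposal is correct and follows essentially the same route as the paper: push $g_0$ along $g$, use right $C$-determinedness of $f$ to factor $g\circ g_0$ through $f$, and then lift. For step~(iii) you can be much more direct than you suggest: since the left vertical is $\Id_X$, one has $\delta'=g^\star\delta$, hence $g_0^\star\delta'=(g\circ g_0)^\star\delta$, and the factorization $g\circ g_0=f\circ t$ forces this extension to vanish by Lemma~\ref{factor}, whence $g_0$ factors through $f'$ again by Lemma~\ref{factor}; the paper phrases this same computation via the long exact $\Hom$--$\E$ sequence of \cite[Proposition~3.3]{NP}, but the content is identical.
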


\begin{proof}
Let $h\colon T\to Z'$ be a morphism such that for each $u\colon C\to T$, there exists some morphism $v\colon C\to Y'$
satisfying $h\circ u=f'\circ v$. Then we have $g\circ h\circ u=g\circ f'\circ v=f\circ g'\circ v$.
Since $f$ is right $C$-determined, there exists some morphism $s\colon T\to Y$ such that $g\circ h=f\circ s$.
We illustrate these by a commutative diagram as follows.
$$
\xymatrix@R=0.3cm{&&C\ar[d]^{^{ \forall u}}\ar@{..>}[ddl]_v&\\
&&T\ar@{..>}[ddl]^{\! \!s}\ar[d]^{^{\forall h}}&\\
X\ar[r]^{e'}\ar@{=}[d]&Y'\ar[r]^{f'}\ar[d]_{g'}&Z'\ar@{-->}[r]^{\delta'}\ar[d]^g&\\
X\ar[r]^e&Y\ar[r]^{f}&Z\ar@{-->}[r]^{\delta}&.}
$$
Thus, to show that $f'$ is right $C$-determined, it suffices to find  a morphism $t\colon T\to Y'$ such that $h=f'\circ t$.
Indeed, by \cite[Proposition 3.3]{NP}, we have the following commutative diagram with exact rows
$$
\xymatrix@C=1.3cm{\C(T,Y')\ar[r]^{\C(T,f')}\ar[d]_{\C(T,g')}&\C(T,Z')\ar[r]^{\delta'_\sharp}\ar[d]_{\C(T,g)}&\E(T,X)\ar@{=}[d]\\
\C(T,Y)\ar[r]^{\C(T,f)}&\C(T,Z)\ar[r]^{\delta_\sharp}&\E(T,X).}
$$
Since $\C(T,g)(h)=g\circ h=f\circ s=\C(T,f)(s)\in\Im\C(T,f)=\Ker\delta_\sharp$, we have
$$\delta'_\sharp(h)=\delta_\sharp (\C(T,g)(h))=0,$$
and hence $h\in {\rm Ker}\delta'_\sharp={\Im}\C(T,f')$. Thus there exists a morphism $t\colon T\to Y'$ such that $h=f'\circ t$.
\end{proof}

Given two objects $X$ and $Y$, we denote by $\radc(X,Y)$ the set of morphisms $f\colon X\to Y$,
that is, for any object $Z$ and any morphisms $g\colon Z\to X$ and $h\colon Y\to Z$,
the morphism $h\circ f\circ g$ lies in $\rad\Endc(Z)$. Then $\radc$ forms an ideal of $\CC$.
By \cite[Corollary~2.10]{Kr15}, we have
\begin{equation}\label{eq:rad}
\radc(X,Y) = \set{ f \colon X \to Y \middle|
f\circ g\in\rad\Endc(Y),
\mbox{ for each }
g\colon Y\to X}.
\end{equation}

A morphism $g\colon Z\to Y$ is said to \emph{almost factor through} $f\colon X\to Y$, if $g$ does not factor through $f$,
and for each object $T$ and each morphism $h\in\radc(T,Z)$, the morphism $g \circ h$ factors through $f$ (\cite{Ri12}).

\begin{proposition}\label{prop:ras}
Consider a morphism of $\s$-triangles as follows:
\[
\xymatrix{
X\ar[r]^{e'}\ar@{=}[d]&Y'\ar[r]^{f'}\ar[d]^{g'}&Z'\ar@{-->}[r]^{\delta'}\ar[d]^g&\\
X\ar[r]^e&Y\ar[r]^{f}&Z\ar@{-->}[r]^{\delta}&.
}
\]
If  $Z'$ is indecomposable and $g$ almost factors through $f$, then $f'$ is right almost split.
\end{proposition}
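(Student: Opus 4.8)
The plan is to verify directly the two defining conditions for $f'\colon Y'\to Z'$ to be right almost split: that $f'$ is not a retraction, and that every non-retraction $h\colon B'\to Z'$ factors through $f'$. For the first, I would suppose $f'$ were a retraction, say $f'\circ s=\Id_{Z'}$ for some $s\colon Z'\to Y'$. Then the commutativity $g\circ f'=f\circ g'$ of the given morphism of $\s$-triangles yields $g=g\circ f'\circ s=f\circ(g'\circ s)$, so $g$ factors through $f$. This contradicts the hypothesis that $g$ almost factors through $f$, one half of which is precisely that $g$ does \emph{not} factor through $f$. Hence $f'$ is not a retraction.

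The heart of the argument is to feed the almost-factorization hypothesis into the factorization test for $f'$, and the bridge is the indecomposability of $Z'$. Since $\C$ is Krull-Schmidt and $Z'$ is indecomposable, $\Endc(Z')$ is local, so $\rad\Endc(Z')$ is exactly the set of non-automorphisms. Using the characterization (\ref{eq:rad}) of $\radc$, I would observe that a morphism $h\colon B'\to Z'$ fails to be a retraction precisely when $h\in\radc(B',Z')$: a retraction produces some $h\circ g=\Id_{Z'}$, which is an automorphism, so $h\notin\radc(B',Z')$; conversely, if some $h\circ g$ were an automorphism $\phi$, then $h\circ(g\circ\phi^{-1})=\Id_{Z'}$ would exhibit $h$ as a retraction.

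Now I would take an arbitrary non-retraction $h\colon B'\to Z'$. By the previous step $h\in\radc(B',Z')$, so the hypothesis that $g$ almost factors through $f$ gives that $g\circ h\colon B'\to Z$ factors through $f$. To conclude I would argue exactly as in the proof of Lemma~\ref{lem:det.PB}: the morphism of $\s$-triangles encodes the compatibility $\delta'=g^\star\delta$, so by \cite[Proposition 3.3]{NP} the connecting maps satisfy $\delta'_\sharp(h)=h^\star\delta'=(g\circ h)^\star\delta=\delta_\sharp(g\circ h)$, with exact rows giving $\Im\C(B',f)=\Ker\delta_\sharp$ and $\Im\C(B',f')=\Ker\delta'_\sharp$. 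Since $g\circ h$ factors through $f$, it lies in $\Ker\delta_\sharp$, whence $\delta'_\sharp(h)=0$ and therefore $h\in\Im\C(B',f')$, i.e. $h$ factors through $f'$. Combined with the first step, this shows $f'$ is right almost split.

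I expect the only genuinely delicate point to be the indecomposability reduction in the second paragraph—identifying ``$h$ is not a retraction'' with ``$h\in\radc(B',Z')$''—which is what licenses the use of the almost-factorization hypothesis. After that, the translation of factoring through $f$ and $f'$ into the vanishing of $\delta_\sharp$ and $\delta'_\sharp$ (alternatively, into splitting via Lemma~\ref{factor}) is a direct transcription of the long-exact-sequence bookkeeping already carried out for Lemma~\ref{lem:det.PB}.
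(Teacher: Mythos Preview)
Your proof is correct and follows essentially the same route as the paper's: show $f'$ is not a retraction because $g$ does not factor through $f$, use indecomposability of $Z'$ together with (\ref{eq:rad}) to identify non-retractions into $Z'$ with elements of $\radc(-,Z')$, apply the almost-factorization hypothesis to get $g\circ h$ factoring through $f$, and then invoke the long-exact-sequence argument of Lemma~\ref{lem:det.PB} to lift this to a factorization of $h$ through $f'$. The only difference is that you spell out in detail the first two steps, which the paper's proof leaves terse.
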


\begin{proof}
Since $g$ does not factors through $f$, $f'$ is not a retraction. Given an object $T$,
assume that $h\colon T\to Z'$ is not a retraction.
Since $Z'$ is indecomposable, $h\in\radc(T,Z')$ by (\ref{eq:rad}). Thus there exists  $s\in\C(T, Y)$
such that $g\circ h = f\circ s$. By using an argument similar to that in the proof of Lemma \ref{lem:det.PB},
there exists  $t\in\C(T,Y')$ such that $h=f'\circ t$. It follows that $f'$ is right almost split.
\end{proof}

\begin{lemma}\label{lem:det.stable}
Let $C$ be an object and $\alpha\colon X \to Y$ an $\s$-deflation.
Then the following statements are equivalent.
\begin{enumerate}
\item[$(1)$] $\alpha$ is right $C$-determined in $\C$.
\item[$(2)$] $\underline\alpha$ is right $C$-determined in $\underline\C$.
\end{enumerate}
\end{lemma}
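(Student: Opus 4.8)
The plan is to reduce everything to Lemma~\ref{lem:ft.stable}, which is tailor-made for exactly this situation: since $\alpha$ is an $\s$-deflation, a morphism $f\colon T\to Y$ factors through $\alpha$ in $\C$ if and only if $\underline f$ factors through $\underline\alpha$ in $\underline\C$. The crucial observation is that the definition of ``right $C$-determined'' only ever speaks about morphisms whose codomain is $Y$ (the test morphism $g\colon T\to Y$ and the composites $g\circ h$ for $h\colon C\to T$) together with assertions that such morphisms factor through $\alpha$. Because $Y$ is precisely the codomain of the $\s$-deflation $\alpha$, every factorization statement appearing in the definition can be transported back and forth between $\C$ and $\underline\C$ by a single invocation of Lemma~\ref{lem:ft.stable}.

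First I would prove $(1)\Rightarrow(2)$. Suppose $\alpha$ is right $C$-determined in $\C$, and let $\underline g\in\underline\C(T,Y)$ be such that $\underline g\circ\underline h$ factors through $\underline\alpha$ in $\underline\C$ for every $\underline h\in\underline\C(C,T)$. Choose a lift $g\in\C(T,Y)$ of $\underline g$. For an arbitrary $h\in\C(C,T)$ we have $\underline g\circ\underline h=\underline{g\circ h}$, which factors through $\underline\alpha$, so Lemma~\ref{lem:ft.stable} applied to the morphism $g\circ h\colon C\to Y$ shows that $g\circ h$ factors through $\alpha$ in $\C$. As this holds for every $h\in\C(C,T)$ and $\alpha$ is right $C$-determined in $\C$, the morphism $g$ factors through $\alpha$ in $\C$; a final application of Lemma~\ref{lem:ft.stable} to $g$ yields that $\underline g$ factors through $\underline\alpha$, as required.

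The converse $(2)\Rightarrow(1)$ is entirely symmetric. Starting from $g\in\C(T,Y)$ whose composites $g\circ h$ all factor through $\alpha$ in $\C$, I would translate each such factorization upward via Lemma~\ref{lem:ft.stable} to conclude that $\underline g\circ\underline h$ factors through $\underline\alpha$ for every $\underline h\in\underline\C(C,T)$; here one uses that the quotient functor $\C\to\underline\C$ is the identity on objects and surjective on Hom-sets, so that every $\underline h$ is of the form $\underline h$ for some lift $h\in\C(C,T)$, and hence the two quantifications ``for all $h$'' and ``for all $\underline h$'' match exactly. Then right $C$-determinacy of $\underline\alpha$ in $\underline\C$ gives that $\underline g$ factors through $\underline\alpha$, and Lemma~\ref{lem:ft.stable} translates this back to $g$ factoring through $\alpha$.

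The argument is almost purely formal, so the only genuine point requiring care---and the step I would treat as the main (albeit minor) obstacle---is verifying that the quantifier ranges coincide: one must check that the hypothesis ``for each $\underline h$'' really is equivalent to the hypothesis ``for each $h$'', which rests on the surjectivity of $\C(C,T)\twoheadrightarrow\underline\C(C,T)$ together with the identity $\underline{g\circ h}=\underline g\circ\underline h$. Once this bookkeeping is in place, both implications follow simply by reading Lemma~\ref{lem:ft.stable} in the two directions.
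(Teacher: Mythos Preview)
Your proposal is correct and follows essentially the same approach as the paper: both directions are reduced to Lemma~\ref{lem:ft.stable}, using that factoring through $\alpha$ in $\C$ is equivalent to factoring through $\underline\alpha$ in $\underline\C$ because $\alpha$ is an $\s$-deflation. Your treatment is in fact slightly more careful than the paper's in that you explicitly address the matching of quantifier ranges via the surjectivity of $\C(C,T)\to\underline\C(C,T)$, a point the paper leaves implicit.
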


\begin{proof}
$(1)\Rightarrow (2)$ Let $f\in\C(T,Y)$ such that for each $g\in\C(C,T)$,
the morphism $\underline{f} \circ \underline{g}$ factors through $\underline\alpha$ in $\underline\C$.
By Lemma~\ref{lem:ft.stable}, the morphism $f\circ g$ factors through $\alpha$ in $\C$. Since $\alpha$
is right $C$-determined in $\C$ by (1),  $f$ factors through $\alpha$ in $\C$.
It follows that $\underline f$ factors through $\underline\alpha$ in $\underline\C$, and hence
$\underline\alpha$ is right $C$-determined in $\underline\C$.

$(2)\Rightarrow (1)$ Let $f\in\C(T,Y)$  such that for each $g\in\C(C,T)$,
the morphism $f\circ g$ factors through $\alpha$ in $\C$. Then $\underline f \circ \underline g$
factors through $\underline\alpha$ in $\underline\C$. Since $\underline\alpha$ is right $C$-determined
in $\underline\C$ by (2), we have that $\underline f$ factors through $\underline\alpha$ in $\underline\C$.
By Lemma~\ref{lem:ft.stable}, the morphism $f$ factors through $\alpha$ in $\C$.
It follows that $\alpha$ is right $C$-determined in $\C$.
\end{proof}

\begin{proposition}\label{prop:det}
Let $C,C'\in\C$ such that $C\simeq C'$ in $\underline\C$. Then an $\s$-deflation $\alpha\colon X \to Y$
is right $C$-determined if and only if it is right $C'$-determined.
\end{proposition}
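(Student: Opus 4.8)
The plan is to reduce the statement to the stable category $\underline\C$, where right-determination by an object becomes a purely categorical notion that is manifestly invariant under isomorphism. By Lemma~\ref{lem:det.stable}, the $\s$-deflation $\alpha$ is right $C$-determined in $\C$ if and only if $\underline\alpha$ is right $C$-determined in $\underline\C$, and likewise with $C'$ in place of $C$. Hence it suffices to prove that, whenever $C\simeq C'$ in $\underline\C$, the morphism $\underline\alpha$ is right $C$-determined in $\underline\C$ if and only if it is right $C'$-determined in $\underline\C$. By symmetry I only need one implication.

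To carry this out, I would fix an isomorphism $\underline u\colon C\to C'$ in $\underline\C$ with inverse $\underline v\colon C'\to C$, so that $\underline v\circ\underline u=\underline{\Id_C}$ and $\underline u\circ\underline v=\underline{\Id_{C'}}$. Assuming $\underline\alpha$ is right $C$-determined, I take an arbitrary $\underline g\colon T\to Y$ for which every composite $\underline g\circ\underline{h'}$ with $\underline{h'}\colon C'\to T$ factors through $\underline\alpha$, and aim to deduce that $\underline g$ itself factors through $\underline\alpha$. The one idea needed is the reindexing $\underline{h'}:=\underline h\circ\underline v$: given any $\underline h\colon C\to T$, the hypothesis applied to this $\underline{h'}$ shows $\underline g\circ\underline{h'}$ factors through $\underline\alpha$, and since factoring through $\underline\alpha$ survives precomposition, so does $\underline g\circ\underline{h'}\circ\underline u=\underline g\circ\underline h\circ(\underline v\circ\underline u)=\underline g\circ\underline h$. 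Thus the defining hypothesis of right $C$-determinacy is verified, so $\underline g$ factors through $\underline\alpha$; this shows $\underline\alpha$ is right $C'$-determined. Interchanging the roles of $C$ and $C'$ (and of $\underline u$ and $\underline v$) gives the converse.

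I do not expect a genuine obstacle here; the content is bookkeeping. The only point requiring care is the direction of composition: right-determination concerns the covariant representable functor $\underline\C(C,-)$, so under the isomorphism $\underline u\colon C\to C'$ the test morphisms transform contravariantly, which is precisely why the substitution must be $\underline{h'}=\underline h\circ\underline v$ rather than $\underline h\circ\underline u$. (One could instead argue directly in $\C$ by lifting $\underline u,\underline v$ to $u,v$ and absorbing the $\s$-projective error terms $v\circ u-\Id_C$ and $u\circ v-\Id_{C'}$ using Lemma~\ref{arb}, since $\s$-projective morphisms factor through the $\s$-deflation $\alpha$; but passing through $\underline\C$ via Lemma~\ref{lem:det.stable} avoids these error terms entirely and is cleaner.)
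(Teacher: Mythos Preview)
Your proposal is correct and follows essentially the same approach as the paper: reduce to $\underline\C$ via Lemma~\ref{lem:det.stable}, use that right-determination is invariant under isomorphism of the determining object, and apply the lemma again. The paper simply asserts the middle step as evident, whereas you spell out the reindexing $\underline{h'}=\underline h\circ\underline v$; this is just an expansion of what the paper takes for granted.
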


\begin{proof}
Note that $\underline\alpha$ is right $C$-determined in $\underline\C$ if and only if $\underline\alpha$ is right
$C'$-determined in $\underline\C$. Then the assertion follows by applying Lemma~\ref{lem:det.stable} twice.
\end{proof}

The following lemma generalizes \cite[Corollary~3.5]{Ri13}.

\begin{lemma}\label{lem:ker.det}
Let $$\xymatrix@C=0.5cm{K\ar[r]&X\ar[r]^\alpha&Y\ar@{-->}[r]^\eta&}$$
be an $\s$-triangle with $K\in\C_l$. Then $\alpha$ is right $\tau^-K$-determined.
\end{lemma}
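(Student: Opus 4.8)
The plan is to translate both the hypothesis and the conclusion of right $\tau^-K$-determination into statements about the $\E$-extension $\eta$, and then to extract the conclusion from the non-degeneracy built into the representability of $D\E(-,K)$.

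First I would record the dictionary coming from the long exact sequence attached to the $\s$-triangle. By \cite[Proposition 3.3]{NP}, for every object $T$ the sequence
$$\C(T,X)\xrightarrow{\C(T,\alpha)}\C(T,Y)\xrightarrow{\eta_\sharp}\E(T,K)$$
is exact, where $\eta_\sharp(f)=f^\star\eta$. Hence a morphism $f\colon T\to Y$ factors through $\alpha$ if and only if $f^\star\eta=0$. Applying this both to $f$ and to the composites $f\circ g$, and using $(f\circ g)^\star\eta=g^\star(f^\star\eta)$, the determination condition becomes purely homological: setting $\mu:=f^\star\eta\in\E(T,K)$, the hypothesis says $g^\star\mu=0$ for every $g\in\C(\tau^-K,T)$, and the desired conclusion is $\mu=0$. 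So it suffices to prove this implication.

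Next I would make the representability explicit. Writing $\gamma:=\psi_{K,\tau^-K}(\underline{\Id_{\tau^-K}})\in D\E(\tau^-K,K)$, I apply naturality of the isomorphism $\psi_K\colon\Ccu(\tau^-K,-)\to D\E(-,K)$ to the morphism $g\colon\tau^-K\to T$, viewed as a morphism in the covariant free variable $M$ of both functors. Feeding $\underline{\Id_{\tau^-K}}$ through the resulting naturality square gives
$$\psi_{K,T}(\underline g)=\gamma\circ\E(g,K),\qquad\text{i.e.}\qquad \psi_{K,T}(\underline g)(\mu)=\gamma(g^\star\mu).$$
In particular, when $g^\star\mu=0$ for all $g$, every functional of the form $\psi_{K,T}(\underline g)$ annihilates $\mu$. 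Finally I invoke surjectivity together with the cogenerator property: since $\psi_{K,T}\colon\Ccu(\tau^-K,T)\to D\E(T,K)=\Hom_k(\E(T,K),\check{k})$ is an isomorphism and every class $\underline g$ is represented by some $g\in\C(\tau^-K,T)$, the functionals $\psi_{K,T}(\underline g)$ exhaust $\Hom_k(\E(T,K),\check{k})$. Thus every $k$-linear map $\E(T,K)\to\check{k}$ kills $\mu$; as $\E(T,K)$ is a finitely generated $k$-module by Ext-finiteness and $\check{k}$ is an injective cogenerator of $k$-mod, this forces $\mu=0$, whence $f$ factors through $\alpha$ and $\alpha$ is right $\tau^-K$-determined.

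I do not expect a serious technical obstacle, since each step is short once the reduction to $\E$-extensions is in place. The only point requiring genuine care is that $K\in\C_l$ need not be indecomposable, so one cannot simply quote the explicit pairing attached to an almost split $\s$-triangle in Lemma~\ref{lemma:representable}; instead the formula $\psi_{K,T}(\underline g)(\mu)=\gamma(g^\star\mu)$ must be recovered from naturality of the abstract representing isomorphism $\psi_K$, exactly as above, and one must keep track that the pairing is well defined on $\Ccu$ (that is, that $\s$-projective $g$ give $g^\star=0$ via $\E(g,K)=0$).
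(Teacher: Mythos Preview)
Your proposal is correct and follows essentially the same route as the paper: both reduce the factorization question to showing $f^\star\eta=0$ via the long exact sequence (the paper cites Lemma~\ref{factor}), then use naturality of the representing isomorphism $\Ccu(\tau^-K,-)\simeq D\E(-,K)$ to identify $\psi_{K,T}(\underline g)(f^\star\eta)=\gamma(g^\star f^\star\eta)=0$, and conclude by surjectivity of $\psi_{K,T}$ together with the cogenerator property of $\check{k}$. Your explicit remark that one must use naturality of the abstract $\psi_K$ rather than Lemma~\ref{lemma:representable} (since $K$ need not be indecomposable) is well taken and matches exactly what the paper does.
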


\begin{proof}
Let $f\in\C(T,Y)$ such that for each $g\in\C(\tau^-K,T)$, the morphism $f \circ g$ factors through $\alpha$, that is,
there exists the following commutative diagram
$$
\xymatrix@C=0.5cm@R=0.3cm{&&\tau^-K\ar[d]^{^{ \forall g}}\ar@{..>}[ddl]^{ \circlearrowleft}&\\
&&T\ar[d]^{^{\forall f}}&\\
K\ar[r]&X\ar[r]^\alpha&Y\ar@{-->}[r]^\eta&.}
$$
By Lemma \ref{factor}, $(f \circ g)^\star\eta=0$. Since $K\in\C_l$, there exists a natural isomorphism
\[
\phi\colon \Ccu(\tau^-K,-)
\longrightarrow D\E(-,K).
\]
Set $\gamma: = \phi_{\tau^-K}(\underline{\Id_{\tau^-K}})$.
By the naturality of $\phi$, we have the following commutative diagram
\[\xymatrix@C+1em{
\Ccu(\tau^-K,\tau^-K)\ar[r]^-{\phi_{\tau^-K}}\ar[d]_-{\Ccu(\tau^-K,\underline{g})}
&D\E(\tau^-K,K)\ar[d]^-{D\E(g,K)}\\
\Ccu(\tau^-K,T)\ar[r]^-{\phi_{T}}
&D\E(T,K).
}\]
Thus
\[
\phi_T(\underline g)
= D\E(g,K)(\gamma)
= \gamma\circ\E(g,K),
\]
and hence
\[
\phi_T(\underline g)(f^\star\eta)
= \gamma(g^\star f^\star\eta)
= \gamma((f\circ g)^\star\eta)
= 0.
\]
Note that $\phi_T(\underline g)$ runs over all maps in $D\E(T,K)$, when $\underline g$ runs over all morphisms in $\Ccu(\tau^-K,T)$.
It follows that $f^\star\eta=0$ and hence, by Lemma \ref{factor}, the morphism $f$ factors through $\alpha$, that is,
there exists the following commutative diagram
$$
\xymatrix@C=0.5cm@R=0.3cm{&&\tau^-K\ar[d]^{^{ \forall g}}\ar[ddl]^{\!\!^{\Downarrow}}&\\
&&T\ar@{..>}[dl]^{\  \circlearrowleft}\ar[d]^{^{\forall f}}&\\
K\ar[r]&X\ar[r]&Y\ar@{-->}[r]^\eta&.}
$$
Thus $\alpha$ is right $\tau^-K$-determined.
\end{proof}

Note that $\CP(C,Y)$ is the subset of $\C(C,Y)$ consisting of all $\s$-projective morphisms.
The following existence theorem generalizes \cite[Corollary~XI.3.4]{Au95} to extriangulated categories.

\begin{theorem}\label{thm:exist}
Let $C\in\C_r$ and $Y\in\C$, and let $H$ be a right $\End_{\C}(C)$-submodule of $\C(C,Y)$ satisfying $\CP(C,Y)\subseteq H$.
Then there exists an $\s$-triangle
$$\xymatrix@C=0.5cm{K\ar[r]&X\ar[r]^\alpha&Y\ar@{-->}[r]^\eta&},$$
such that $\alpha$ is right $C$-determined, $K\in\add(\tau C)$ and $H=\Im\C(C,\alpha)$.
\end{theorem}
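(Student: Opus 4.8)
The plan is to construct the desired $\s$-triangle directly from the almost split $\s$-triangle ending at $C$, using the module $H$ to control which part of $\C(C,Y)$ survives. First I would invoke the hypothesis $C\in\C_r$: since $C$ is (without loss of generality, after reducing to the indecomposable non-$\s$-projective case) an object in $\C_r$, there exists an almost split $\s$-triangle
$$\xymatrix@C=0.5cm{\tau C\ar[r]&E\ar[r]^\beta&C\ar@{-->}[r]^\zeta&}$$
with $\tau C\in\C_l$ and $\tau^-\tau C\simeq C$ in $\underline\C$ (Proposition~\ref{prop:C_r and C_l description} and Theorem~\ref{prop:quasi-inverse}). The representability furnished by $\C_r$ gives a natural isomorphism $\phi\colon\Cco(-,\tau C)\to D\E(C,-)$, and I expect $H$ to correspond, via this duality, to a $k$-submodule of $\E(C,-)$-data determining how many copies of $\tau C$ to glue on.

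Next I would use $H$ to build the object $K$ and the morphism $\alpha$. The idea is standard in the Auslander theory: choose generators of $\C(C,Y)/H$ as a right $\End_\C(C)$-module, assemble them into a single morphism $f\colon C^{n}\to Y$ whose image in the quotient captures exactly the classes not lying in $H$, and then pull back the sum of copies of the almost split $\s$-triangle along $f$. Concretely, I would take $K\in\add(\tau C)$ to be a suitable direct sum of copies of $\tau C$ (one for each generator, modulo the $\s$-projective part absorbed by $H$), form the $\s$-extension $f_\star(\zeta^{\oplus})\in\E(Y,K)$ obtained by pushing out, and let
$$\xymatrix@C=0.5cm{K\ar[r]&X\ar[r]^\alpha&Y\ar@{-->}[r]^\eta&}$$
realize it. By construction $K\in\add(\tau C)$, so $K\in\C_l$ and $\tau^-K\in\add(\tau^-\tau C)\simeq\add(C)$ in $\underline\C$; Lemma~\ref{lem:ker.det} then yields that $\alpha$ is right $\tau^-K$-determined, and Proposition~\ref{prop:det} upgrades this to right $C$-determined since $\tau^-K\simeq C$ in $\underline\C$.

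The last task is to verify $H=\Im\C(C,\alpha)$. Applying the long exact sequence of $\C(C,-)$ to the $\s$-triangle (via \cite[Proposition 3.3]{NP}) gives
$$\xymatrix@C=0.8cm{\C(C,X)\ar[r]^{\C(C,\alpha)}&\C(C,Y)\ar[r]^{\eta_\sharp}&\E(C,K),}$$
so $\Im\C(C,\alpha)=\Ker\eta_\sharp$. I would then identify $\eta_\sharp$ through the duality $\phi$: connecting $\eta=f_\star(\zeta^{\oplus})$ with the bilinear pairing of Lemma~\ref{lemma:representable} shows that a morphism $g\colon C\to Y$ lies in $\Ker\eta_\sharp$ precisely when $g^\star\eta=0$, which by the almost split property of each $\zeta$ and the choice of $f$ translates into $g\in H$. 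The containment $\CP(C,Y)\subseteq H$ guarantees the $\s$-projective morphisms are handled correctly, since $\s$-projective morphisms $g$ satisfy $g^\star\zeta=0$ automatically.

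The main obstacle I anticipate is the precise bookkeeping in identifying $H$ with $\Ker\eta_\sharp$: one must choose the pushout data $f$ so that the right $\End_\C(C)$-module structure of $H$ matches the kernel of $\eta_\sharp$ exactly, neither too large nor too small, and this requires carefully exploiting both the $\End_\C(C)$-linearity of the duality $\phi$ and the defining property (AS2) of the almost split extension $\zeta$. Getting the correct multiplicity of $\tau C$ in $K$, i.e.\ matching generators of $\C(C,Y)/H$ to summands of $K$ while absorbing $\CP(C,Y)$, is where the argument must be handled with care.
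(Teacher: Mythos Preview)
Your overall strategy---produce an $\s$-triangle with $K\in\add(\tau C)$, invoke Lemma~\ref{lem:ker.det} and Proposition~\ref{prop:det} to get right $C$-determinedness, then identify $\Im\C(C,\alpha)$ with $H$ via the long exact sequence---is exactly right. But the construction of the extension $\eta$ has a genuine gap.

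You propose to choose generators of $\C(C,Y)/H$, assemble them into $f\colon C^n\to Y$, and form ``$f_\star(\zeta^{\oplus})\in\E(Y,K)$''. This operation is not defined: with $\zeta^{\oplus}\in\E(C^n,(\tau C)^n)$ and $f\colon C^n\to Y$, there is no pushforward or pullback landing in $\E(Y,K)$. Pushforward $a_\star$ acts in the second (covariant) variable, pullback $c^\star$ in the first (contravariant) variable; a morphism $C^n\to Y$ simply cannot move an extension from $\E(C^n,-)$ to $\E(Y,-)$. Morphisms $C\to Y$ lie on the wrong side of the duality to produce extensions of $Y$.

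The paper's fix is to work on the dual side from the start. Using $\tau C\in\C_l$ one has the isomorphism $\phi\colon\Ccu(C,-)\to D\E(-,\tau C)$ (note: the \emph{stable} Hom and $\E(-,\tau C)$, not the costable isomorphism $\Cco(-,\tau C)\simeq D\E(C,-)$ you wrote). Under the resulting non-degenerate pairing $\Ccu(C,Y)\times\E(Y,\tau C)\to\check k$, the submodule $\underline H$ has an annihilator $\underline H^\perp\subseteq\E(Y,\tau C)$; since $\E(Y,\tau C)$ is finitely generated over $k$, one picks $\End_\C(C)$-generators $\eta_1,\dots,\eta_n$ of $\underline H^\perp$. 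These $\eta_i$ are already elements of $\E(Y,\tau C)$, so each one \emph{is} an $\s$-triangle $\tau C\to X_i\xrightarrow{\alpha_i} Y$, and pulling their direct sum back along the diagonal $Y\to Y^n$ gives the desired $\alpha$. The verification $\Im\C(C,\alpha)=H$ then reduces to the double-annihilator identity $\underline H={^\perp(\underline H^\perp)}$, which is where the non-degeneracy of $\phi_Y$ is used. In short: you must pick generators of $\underline H^\perp\subseteq\E(Y,\tau C)$, not of $\C(C,Y)/H$; the two are related by duality but are not interchangeable in the construction.
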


\begin{proof}
By assumption, $\tau C\in\C_l$ and $\tau^-\tau C\simeq C$ in $\underline\C$. Then there is a natural isomorphism
\[
\phi\colon\Ccu(C,-)
\longrightarrow D\E(-,\tau C).
\]
Set $\gamma:=\phi_C(\underline{\Id_C})$. By the naturality of $\phi$, for each object $Z$ and each $f\in\C(C,Z)$,
we have the following commutative diagram
\[\xymatrix@C+1em{
\Ccu(C,C)\ar[r]^-{\phi_C}\ar[d]_-{\Ccu(C,\underline{f})}
&D\E(C,\tau C)\ar[d]^-{D\E(f,\tau C)}\\
\Ccu(C,Z)\ar[r]^-{\phi_Z}
&D\E(Z,\tau C).
}\]
So
\[
\phi_Z(\underline f)
= D\E(f,\tau C)(\gamma)
= \gamma\circ\E(f,\tau C).
\]
Then for each $\mu\in\E(Z,\tau C)$, we have
\[
\phi_Z(\underline f)(\mu)
= \gamma(\E(f,\tau C)(\mu))
= \gamma(f^\star\mu).
\]

Set $\underline H:=H/\CP(C,Y)$ and
\[
\underline H^\perp:
= \set{\mu\in\E(Y,\tau C) \middle|
\phi_Y(\underline h)(\mu) = 0
\mbox{ for each }
\underline h \in \underline H}.
\]
We observe that $\underline H^\perp$ is a left $\End_{\C}(C)$-submodule of $\E(Y,\tau C)$.
Here, for any $f\in\End_{\C}(C)$ and $\mu\in\E(Y,\tau C)$, the action of $f$ on $\mu$ is given by $\tau(\underline{f})_\star\mu$.
Then there exists finitely many $\eta_1,\eta_2,\cdots,\eta_n$ in $\E(Y,\tau C)$ such that $\underline H^\perp = \sum_{i=1}^n \eta_i\End_{\C}(C)$.
Assume that we have an $\s$-triangle
$$\xymatrix@C=0.5cm{\tau C\ar[r]& X_i\ar[r]^{\alpha_i}& Y\ar@{-->}[r]^{\eta_i}&}$$ for each $i=1,2,\cdots,n$.
Then $\alpha_i$ is right $\tau^-\tau C$-determined by Lemma~\ref{lem:ker.det}, and thus $\alpha_i$ is right $C$-determined by Proposition~\ref{prop:det}.
Note that $\bigoplus_{i=1}^n \alpha_i$ is an $\s$-deflation. It is easy to verify that $\bigoplus_{i=1}^n \alpha_i$ is right $C$-determined.

Consider the following commutative diagram
\[\xymatrix{
\bigoplus_{i=1}^n \tau C\ar[r] &X\ar[r]^\alpha\ar[d]  &Y\ar@{-->}[r]^{{\vartriangle}^\star(\bigoplus \eta_i)}&\\
\bigoplus_{i=1}^n \tau C\ar[r] &\bigoplus_{i=1}^n X_i &\bigoplus_{i=1}^n Y
\ar[u];[]^-\vartriangle
\ar[l];[]^{\bigoplus_{i=1}^n \alpha_i}
\ar@{=}[llu];[ll]\ar@{-->}[r]^{\ \bigoplus \eta_i}&,
}\]
where $\vartriangle=(\mbox{Id}_Y,\mbox{Id}_Y,...,\mbox{Id}_Y)^{\mbox{tr}}$.
We have that $\alpha$ is an $\s$-deflation and $\bigoplus_{i=1}^n \tau C\in\add(\tau C)$. By Lemma~\ref{lem:det.PB},
$\alpha$ is right $C$-determined. By a direct verification, we have
\[
\Im \C(C,\alpha)
= \bigcap_{i=1}^n \Im\C(C,\alpha_i).
\]

For each $i=1,2,\cdots,n$, set
\[
^\perp(\eta_i \End_{\C}(C)):
=\set{\underline h \in \Ccu(C,Y) \middle|
\phi_Y(\underline{h})(\mu) = 0
\mbox{ for each }
\mu \in \eta_i \End_{\C}(C)}.
\]
We observe that $^\perp(\eta_i\End_{\C}(C))$ is a right $\End_{\C}(C)$-submodule of $\Ccu(C,Y)$.
Since $\alpha_i$ is an $\s$-deflation, $\CP(C,Y)\subseteq\Im\C(C,\alpha_i)$.

{\bf Claim.} $^\perp(\eta_i\End_{\C}(C))= \Im \C(C,\alpha_i)/\CP(C,Y)$.

Let $h\colon C\to Y$ be a morphism in $\Im\C(C,\alpha_i)$. We have $h^\star\eta_i$ splits. Then
\[
\phi_Y(\underline h)(\tau(\underline f)_\star\eta_i)
= \gamma(h^\star\tau(\underline f)_\star\eta_i)
= \gamma(\tau(\underline f)_\star h^\star\eta_i)
= 0,
\]
for each $f\colon C\to C$. It follows that
$\Im\C(C,\alpha_i)/\CP(C,Y)\subseteq {^\perp(\eta_i\End_{\C}(C))}$.

On the other hand, let $h\in\C(C,Y)$  such that $\underline h\in{^\perp(\eta_i\End_{\C}(C))}$.
Then we have $\phi_Y(\underline h)(\tau(\underline f)_\star\eta_i)=0$ for each $f\colon C\to C$.
Consider the following commutative diagram
\[\xymatrix{
\Ccu(C,Y)\ar[r]^-{\phi_Y} &D\E(Y,\tau C)\\
\Ccu(C,Y)\ar[r]^-{\phi_Y} &D\E(Y,\tau C).
\ar[lu];[l]_{\Ccu(\underline f,Y)}
\ar[u];[]^{D\E(Y,\tau(\underline f))}
}\]
By using diagram chasing, we have
\[\begin{split}
\phi_Y(\underline{h} \circ \underline{f})
&= \phi_Y ( \Ccu(\underline{f}, Y) (\underline{h}) )\\
&= (D\E(Y, \tau(\underline{f})) \circ \phi_Y) (\underline{h})\\
&= \phi_Y(\underline{h}) \circ \E(Y, \tau(\underline{f})).\\
\end{split}\]
Then for each $\eta_i$, we have
\[
\phi_Y(\underline h\circ\underline f)(\eta_i)
= \phi_Y(\underline h)(\tau(\underline f)_\star\eta_i)
= 0.
\]
It follows that
\[
\phi_C(\underline f)(h^\star\eta_i)
= \gamma(f^\star h^\star\eta_i)
= \gamma((h\circ f)^\star\eta_i)
= \phi_Y(\underline{h\circ f})(\eta_i)
= 0.
\]
Observe that $\phi_C(\underline f)$ runs over all maps in $D\E(C,\tau C)$, when $\underline f$ runs over all morphisms
in $\underline\End_\C(C)$. It follows that $h^\star\eta_i$ splits and the morphism $h$ factors through $\alpha_i$.
Then we have $h\in \Im\C(C,\alpha_i)$ and
${^\perp(\eta_i\End_{\C}(C))}\subseteq\Im\C(C,\alpha_i)/\CP(C,Y)$. The claim is proved.

Because
\[
\underline H
= {^\perp(\underline H^\perp)}
= {^\perp(\sum_{i=1}^n\eta_i\End_{\C}(C))}
= \bigcap_{i=1}^n {^\perp(\eta_i\End_{\C}(C))},
\]
where the first equality follows from the isomorphism $\phi_Y$, we have
\[
\underline H
= \bigcap_{i=1}^n \Im \C(C,\alpha_i)/\CP(C,Y)
= \Im \C(C,\alpha)/\CP(C,Y).
  \]
Then the assertion follows since $\CP(C,Y)\subseteq H$.
\end{proof}

\subsection{A characterization for $\s$-deflations determined by objects}

In this section, we give a characterization for an $\s$-deflation being right $C$-determined for some object $C$.

Recall from \cite{Ch15} that two morphisms $f\colon X\to Y$ and $f'\colon X'\to Y$ are called \emph{right equivalent}
if $f$ factors through $f'$ and $f'$ factors through $f$. Assume that $f$ and $f'$ are right equivalent.
Given an object $C$, we have that $f$ is right $C$-determined if and only if so is $f'$.

In what follows,
we always assume that the following {\it weak idempotent completeness} (WIC for short)
given originally in \cite[Condition 5.8]{NP} holds true on $\C$.

{\bf WIC Condition:} \begin{itemize}
\item[$(1)$] Let $f \in \C(A,B)$, $g \in\C(B,C)$ be any composable pair of morphisms. If
$g \circ f$ is an $\s$-inflation, then so is $f$.
\item[$(2)$] Let $f \in \C(A,B)$, $g \in\C(B,C)$ be any composable pair of morphisms. If
$g \circ f$ is an $\s$-deflation, then so is $g$.
\end{itemize}

Under this condition, we have that $f$ is an $\s$-deflation if and only if so is $f'$ provided that $f$ and $f'$ are right equivalent.

Since $\C$ is Krull-Schmidt, each morphism $f\colon X\to Y$ has a right minimal version; see
\cite[Theorem~1]{Bi}.
Given a morphism $f\colon X\to Y$ , we call a right minimal morphism $f'\colon X'\to Y$ the \emph{right minimal version} of $f$,
if $f$ and $f'$ are right equivalent. Assume that $f\colon X\to Y$ is an $\s$-deflation and there exists an $\s$-triangle
$$\xymatrix@C=0.5cm{K\ar[r]&X'\ar[r]^{f'}&Y\ar@{-->}[r]&},$$ then, following \cite[Section~2]{Ri12},
we call $K$ a \emph{intrinsic weak kernel} of $f$.

Dually, two morphisms $f\colon X\to Y$ and $f'\colon X\to Y'$ are called \emph{left equivalent}
if $f$ factors through $f'$ and $f'$ factors through $f$. We have that there exists some left minimal morphism $g\colon X\to X'$
such that $f$ and $g$ are left equivalent. We call $g$ the \emph{left minimal version} of $f$. Assume that
$f\colon X\to Y$ is an $\s$-inflation and there exists an $\s$-triangle
$$\xymatrix@C=0.5cm{X\ar[r]^g&X'\ar[r]&Z\ar@{-->}[r]&},$$ then, we call $Z$ an \emph{intrinsic weak cokernel} of $f$.

\begin{lemma}\label{lem:exist.ass}
Let $a\colon X\to Y$ be an $\s$-deflation and $C$ an indecomposable object. If there exists a morphism $f\colon C\to Y$
which almost factors through $a$, then there exists an almost split $\s$-triangle
$$\xymatrix@C=0.5cm{K\ar[r]&E\ar[r]^r&C\ar@{-->}[r]^\rho&}$$ such that $K$ is a direct summand of an intrinsic weak kernel of $a$.
\end{lemma}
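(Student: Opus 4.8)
The plan is to combine the pullback construction, Proposition~\ref{prop:ras}, and a passage to a right minimal version, and then to verify by hand the two defining conditions (AS1) and (AS2) of an almost split $\E$-extension in Definition~\ref{DefARExt}.

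First I would reduce to the right minimal version of $a$. Since $\C$ is Krull--Schmidt, $a$ admits a right minimal version $a_0\colon X_0\to Y$, and under the WIC condition $a_0$ is again an $\s$-deflation; fix an $\s$-triangle $\xymatrix@C=0.4cm{K_a\ar[r]&X_0\ar[r]^{a_0}&Y\ar@{-->}[r]&}$, so that $K_a$ is an intrinsic weak kernel of $a$. Because $a$ and $a_0$ are right equivalent, a morphism factors through $a$ if and only if it factors through $a_0$; hence $f$ also almost factors through $a_0$. Pulling this $\s$-triangle back along $f\colon C\to Y$ produces an $\s$-triangle $\xymatrix@C=0.4cm{K_a\ar[r]&W\ar[r]^{r'}&C\ar@{-->}[r]&}$ with the same first term $K_a$, together with a morphism of $\s$-triangles of exactly the shape required by Proposition~\ref{prop:ras} (with $C$ in the role of $Z'$, $a_0$ in the role of the bottom $\s$-deflation, and $f$ in the role of $g$). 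Since $C$ is indecomposable and $f$ almost factors through $a_0$, Proposition~\ref{prop:ras} shows that $r'$ is right almost split.

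Next I would pass to a right minimal version $r\colon E\to C$ of $r'$; this exists by Krull--Schmidt, is again an $\s$-deflation by the WIC condition, and is still right almost split since right almost splitness is preserved under right equivalence. Fixing an $\s$-triangle $\xymatrix@C=0.4cm{K\ar[r]^i&E\ar[r]^r&C\ar@{-->}[r]^\rho&}$, I claim $\rho$ is almost split. Condition (AS2) is immediate: any non-retraction $c\colon C'\to C$ factors through the right almost split $r$, whence $c^\star\rho=0$ by Lemma~\ref{factor}. The heart of the argument — and the step I expect to be the main obstacle — is condition (AS1), that the $\s$-inflation $i$ is left almost split. Given a non-section $b\colon K\to M$, I would assume $b_\star\rho\neq 0$ and derive a contradiction. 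Pushing $\rho$ out along $b$ yields an $\s$-triangle with non-split extension $b_\star\rho$, hence a non-retraction $\s$-deflation $\tilde r\colon E'\to C$; using that $r$ is right almost split together with right minimality, the induced middle map $\beta\colon E\to E'$ is a split monomorphism, and the associated idempotent splits $E'\cong E\oplus E''$ so that $\tilde r$ becomes $(r,0)$ and consequently $M\cong K\oplus E''$ with $b=\binom{b_1}{0}$ and $i b_1=i$. Lifting $i b_1=i$ through (ET3) to an endomorphism $(b_1,\Id_E,c_1)$ of $\rho$, the indecomposability of $C$ and right minimality of $r$ force $c_1$ to be an automorphism: otherwise $c_1$ is a non-retraction, hence factors through $r$, and right minimality makes $r$ a split monomorphism, which on the indecomposable $C$ forces $r$ to be an isomorphism and therefore a retraction, contradicting that $r$ is right almost split. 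Then the long exact sequences of \cite[Proposition~3.3]{NP} together with the classical five lemma upgrade the morphism of $\s$-triangles $(b_1,\Id_E,c_1)$, whose middle and right components are isomorphisms, to the conclusion that $b_1$ is an automorphism; thus $b=\binom{b_1}{0}$ is a section, contradicting the choice of $b$. This establishes (AS1), and together with (AS2) shows that $\rho$ is almost split.

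Finally I would identify $K$. Since $r$ is the right minimal version of $r'$, in the Krull--Schmidt category we may write $W\cong E\oplus W_0$ with $r'=(r,0)$; comparing the $\s$-triangle $\xymatrix@C=0.4cm{K_a\ar[r]&W\ar[r]^{r'}&C\ar@{-->}[r]&}$ with the direct sum of the almost split $\s$-triangle for $r$ and the split $\s$-triangle $\xymatrix@C=0.4cm{W_0\ar@{=}[r]&W_0\ar[r]&0\ar@{-->}[r]&}$, and using that the first term of an $\s$-triangle is determined up to isomorphism by its $\s$-deflation, I obtain $K_a\cong K\oplus W_0$. Hence $K$ is a direct summand of the intrinsic weak kernel $K_a$ of $a$, as required.
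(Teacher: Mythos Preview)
Your proof is correct and follows essentially the same route as the paper: reduce to a right minimal $\s$-deflation, pull back along $f$ and invoke Proposition~\ref{prop:ras} to get a right almost split $\s$-deflation, pass to its right minimal version $r$, and then verify (AS1) and (AS2) directly. The paper's verification of (AS1) is more streamlined than yours: after pushing out along the non-section and factoring the resulting non-retraction $\s$-deflation back through $r$, it simply composes the two morphisms of $\s$-triangles to obtain an endomorphism of $\rho$ whose middle component is an automorphism by right minimality of $r$, and then concludes from \cite[Corollary~3.6]{NP} that the left component is an automorphism---this avoids your explicit decompositions of $E'$ and $M$ and the separate (ET3)-lifting step (and likewise the paper shows $K$ is a summand of the intrinsic weak kernel by the same compose-and-apply-\cite[Corollary~3.6]{NP} trick rather than by decomposing $W$).
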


\begin{proof}
We may assume that $a$ is right minimal. Since $a\colon X\to Y$  is an $\s$-deflation, we have an $\s$-triangle
$$\xymatrix@C=0.5cm{L\ar[r]&X\ar[r]^a&Y\ar@{-->}[r]^\sigma&}.$$
Consider a morphism of $\s$-triangles
\[
\xymatrix{
L\ar[r]\ar@{=}[d]&Z\ar[r]^{b}\ar[d]&C\ar@{-->}[r]^{\delta}\ar[d]^f&\\
L\ar[r]&X\ar[r]^{a}&Y\ar@{-->}[r]^{\sigma}&.
}
\]
Then $b$ is an $\s$-deflation. By Proposition~\ref{prop:ras}, $b$ is right almost split.
Let $r\colon E\to C$ be the right minimal version of $b$. Then $r$ is a right almost split $\s$-deflation. Let
$$\xymatrix@C=0.5cm{K\ar[r]&E\ar[r]^r&C\ar@{-->}[r]^{\rho}&}$$ be an $\s$-triangle.
Since $b$ and $r$ are right equivalent, we have the following morphisms of $\s$-triangles
\[
\xymatrix{
K\ar[r]\ar[d]&E\ar[r]^{r}\ar[d]^s&C\ar@{-->}[r]^{\rho}\ar@{=}[d]&\\
L\ar[r]\ar[d]&Z\ar[r]^{b}\ar[d]^t&C\ar@{-->}[r]^{\delta}\ar@{=}[d]&\\
K\ar[r]&E\ar[r]^{r}&C\ar@{-->}[r]^{\rho}&.
}
\]
Since $r$ is right minimal, $t\circ s$ is an isomorphism, and hence $K$ is a direct summand of $L$ by \cite[Corollary 3.6]{NP}.

We claim that
$$\xymatrix@C=0.5cm{K\ar[r]^p&E\ar[r]^r&C\ar@{-->}[r]^{\rho}&}$$ is an almost split $\s$-triangle.
Indeed, assume $f_\star\rho\neq 0$ for any non-section $f \in\C(K,K')$. Let
$$\xymatrix@C=0.5cm{K'\ar[r]&E'\ar[r]^{r'}&C\ar@{-->}[r]^{f_\star\rho}&}$$
be an $\s$-triangle. Consider the following commutative diagram
\[
\xymatrix{
K\ar[r]^p\ar[d]^f&E\ar[r]^{r}\ar[d]&C\ar@{-->}[r]^{\rho}\ar@{=}[d]&\\
K'\ar[r]&E'\ar[r]^{r'}&C\ar@{-->}[r]^{f_\star\rho}&.
}
\]
The $\s$-deflation $r'$ is not a retraction, and hence factors through $r$. We then obtain the following commutative diagram
\[
\xymatrix{
K\ar[r]^p\ar[d]^f&E\ar[r]^{r}\ar[d]^g&C\ar@{-->}[r]^{\rho}\ar@{=}[d]&\\
K'\ar[r]\ar[d]^{f'}&E'\ar[r]^{r'}\ar[d]^{g'}&C\ar@{-->}[r]^{f_\star\rho}\ar@{=}[d]&\\
K\ar[r]^p&E\ar[r]^{r}&C\ar@{-->}[r]^{\rho}&.
}
\]
Since $r$ is right minimal, the morphism $g' \circ g$ is an isomorphism. We obtain that $f' \circ f$ is an isomorphism,
which is a contradiction since $f$ is not a section. This shows that $f_\star\rho= 0$  for any non-section $f \in\C(K,K')$.
By the same argument, we have $g^\star \rho=0$ for any non-retraction $g\in\C(C',C)$.
The claim is proved.
\end{proof}

By using an argument similar to that in the proofs of \cite[Proposition~XI.2.4 and Lemma~XI.2.1]{Au95}, we get that
if $C$ is a minimal right determiner of an $\s$-deflation $\alpha\colon X\to Y$, then an indecomposable object $C'$
is a direct summand of $C$ if and only if there exists a morphism $f\colon C'\to Y$ which almost factors through $\alpha$.

\begin{corollary}\label{cor:min.det}
A minimal right determiner of an $\s$-deflation has no non-zero $\s$-projective direct summands and lies in $\C_r$.
\end{corollary}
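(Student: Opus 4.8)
The plan is to reduce to indecomposable direct summands and then feed the characterization stated in the paragraph immediately preceding the corollary into Lemma~\ref{lem:exist.ass}. Let $C$ be a minimal right determiner of an $\s$-deflation $\alpha\colon X\to Y$. Since $\C$ is Krull-Schmidt, I would write $C=\bigoplus_{i=1}^n C_i$ as a finite direct sum of indecomposable objects and analyze each $C_i$ separately. By that characterization (the analogue of \cite[Proposition~XI.2.4 and Lemma~XI.2.1]{Au95}), each indecomposable direct summand $C_i$ of $C$ admits a morphism $f_i\colon C_i\to Y$ which almost factors through $\alpha$.

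Next I would invoke Lemma~\ref{lem:exist.ass}: as $C_i$ is indecomposable and $f_i$ almost factors through the $\s$-deflation $\alpha$, there is an almost split $\s$-triangle $\xymatrix@C=0.5cm{K_i\ar[r]&E_i\ar[r]^{r_i}&C_i\ar@{-->}[r]^{\rho_i}&}$ ending at $C_i$, where $\rho_i\in\E(C_i,K_i)$ is an almost split, and in particular non-split (non-zero), $\E$-extension.

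For the first assertion I would argue that no $C_i$ can be $\s$-projective. Indeed, if $C_i$ were $\s$-projective, then $\E(\Id_{C_i},-)=0$ is the identity on $\E(C_i,-)$, forcing $\E(C_i,K_i)=0$ and hence $\rho_i=0$; this contradicts the fact that $\rho_i$ is almost split. Thus every indecomposable summand $C_i$ is non-$\s$-projective, and $C$ has no non-zero $\s$-projective direct summand. For the second assertion I would apply Proposition~\ref{prop:C_r and C_l description}(1): since $C_i$ is indecomposable, non-$\s$-projective, and the almost split $\s$-triangle with class $\rho_i$ ends at $C_i$, we obtain $C_i\in\C_r$. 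Because $\C_r$ is an additive subcategory closed under direct summands, the finite direct sum $C=\bigoplus_{i=1}^n C_i$ again lies in $\C_r$.

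The substance of the argument is carried entirely by the two inputs quoted above, so once they are in place the corollary is essentially immediate; the only point requiring care is the passage from $\s$-projectivity of $C_i$ to the vanishing of $\E(C_i,-)$ and thence to the contradiction with non-splitness of $\rho_i$. This is precisely where the strength of Lemma~\ref{lem:exist.ass}—producing a genuinely \emph{almost split} $\s$-triangle rather than merely a right almost split $\s$-deflation—is used, and I expect it to be the one step where the hypotheses must be marshalled with some attention.
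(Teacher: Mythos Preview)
Your proposal is correct and follows essentially the same route as the paper: reduce to indecomposable summands, invoke the preceding characterization to obtain a morphism almost factoring through $\alpha$, apply Lemma~\ref{lem:exist.ass}, and conclude via Proposition~\ref{prop:C_r and C_l description}. The only cosmetic difference is that the paper deduces non-$\s$-projectivity of $C_i$ directly from the fact that $f_i$ does not factor through the $\s$-deflation $\alpha$ (cf.\ Lemma~\ref{arb}), rather than from the non-splitness of $\rho_i$; both arguments are valid.
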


\begin{proof}
Let $\alpha\in\C(X,Y)$ be an $\s$-deflation and $C$ a minimal right determiner of $\alpha$. It suffices to show that each indecomposable
direct summand $C'$ of $C$ is not $\s$-projective and lies in $\C_r$. By assumption, there exists a morphism $f\colon C'\to Y$
which almost factors through $\alpha$. We have that $C'$ is not $\s$-projective, since $f$ does not factor through $\alpha$.
By Lemma~\ref{lem:exist.ass}, there exists an almost split $\s$-triangle ending at $C'$. Then the assertion
follows from Proposition~\ref{prop:C_r and C_l description}.
\end{proof}

Now we  give a characterization for an $\s$-deflation being right $C$-determined for some object $C$.

\begin{theorem}\label{thm:det}
For any $\s$-deflation $\alpha\in\C(X,Y)$, the following statements are equivalent.
\begin{enumerate}
\item[$(1)$] $\alpha$ is right $C$-determined for some object $C$.
\item[$(2)$] The intrinsic weak kernel of $\alpha$ lies in $\C_l$.
\end{enumerate}
\end{theorem}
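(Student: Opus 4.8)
The plan is to treat the two implications separately. The implication $(2)\Rightarrow(1)$ is essentially a direct consequence of Lemma~\ref{lem:ker.det}, while $(1)\Rightarrow(2)$ is the substantial direction and will be built on Theorem~\ref{thm:exist} together with a uniqueness-up-to-right-equivalence principle for morphisms determined by objects. Throughout I work under the ambient WIC condition.

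For $(2)\Rightarrow(1)$, let $K$ denote the intrinsic weak kernel of $\alpha$, so that by definition there is an $\s$-triangle $\xymatrix@C=0.5cm{K\ar[r]&X'\ar[r]^{\alpha'}&Y\ar@{-->}[r]&}$ whose $\s$-deflation $\alpha'$ is the right minimal version of $\alpha$. Granting $K\in\C_l$, Lemma~\ref{lem:ker.det} applies verbatim and shows that $\alpha'$ is right $\tau^-K$-determined. Since $\alpha$ and $\alpha'$ are right equivalent and right $C$-determinedness is invariant under right equivalence, $\alpha$ is itself right $\tau^-K$-determined, which gives $(1)$ with $C=\tau^-K$.

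For $(1)\Rightarrow(2)$, I would first arrange for a right determiner lying in $\C_r$, so that Theorem~\ref{thm:exist} becomes applicable. As $\alpha$ is right $C$-determined and $\C$ is Krull-Schmidt, $\alpha$ has a minimal right determiner $C_0$, and Corollary~\ref{cor:min.det} gives $C_0\in\C_r$; in particular $\alpha$ is right $C_0$-determined. I then set $H:=\Im\C(C_0,\alpha)$. This is a right $\End_{\C}(C_0)$-submodule of $\C(C_0,Y)$, being closed under precomposition, and it contains $\CP(C_0,Y)$ because $\alpha$ is an $\s$-deflation. Feeding $H$ into Theorem~\ref{thm:exist} produces an $\s$-triangle $\xymatrix@C=0.5cm{K_0\ar[r]&X_0\ar[r]^{\alpha_0}&Y\ar@{-->}[r]&}$ with $\alpha_0$ right $C_0$-determined, $\Im\C(C_0,\alpha_0)=H=\Im\C(C_0,\alpha)$, and $K_0\in\add(\tau C_0)$. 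Since $\tau C_0\in\C_l$ and $\C_l$ is an additive subcategory closed under direct summands, we have $\add(\tau C_0)\subseteq\C_l$, whence $K_0\in\C_l$.

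The crux is then to identify $\alpha$ with $\alpha_0$ up to right equivalence, and the hard part of the whole argument lies here rather than in either cited result. The key observation I would record is that two right $C_0$-determined morphisms into $Y$ with the same image under $\C(C_0,-)$ must be right equivalent: for each $h\in\C(C_0,X_0)$ the morphism $\alpha_0\circ h$ lies in $\Im\C(C_0,\alpha_0)=\Im\C(C_0,\alpha)$ and hence factors through $\alpha$, so the defining property of $\alpha$ being right $C_0$-determined forces $\alpha_0$ to factor through $\alpha$; the symmetric argument yields the reverse factorization. Thus $\alpha$ and $\alpha_0$ are right equivalent, so they share a common right minimal version and therefore have the same intrinsic weak kernel $K'$. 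Decomposing $X_0$ as $X_0'\oplus N$ in the Krull-Schmidt category so that $\alpha_0$ restricts to its right minimal version on $X_0'$ and to zero on $N$, additivity of $\s$ gives $K_0\cong K'\oplus N$, exhibiting $K'$ as a direct summand of $K_0\in\C_l$; since $\C_l$ is closed under summands, the intrinsic weak kernel $K'$ of $\alpha$ lies in $\C_l$, which is exactly $(2)$. I expect the delicate points to be the reconstructibility of a right $C_0$-determined $\s$-deflation, up to right equivalence, from the single invariant $\Im\C(C_0,\alpha)$, and the compatibility of right equivalence with passage to right minimal versions, which is what licenses swapping the abstract $\alpha$ for the explicit model $\alpha_0$ whose first term is manifestly in $\C_l$.
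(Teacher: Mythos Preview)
Your proposal is correct and follows essentially the same route as the paper: for $(2)\Rightarrow(1)$ both apply Lemma~\ref{lem:ker.det} to the right minimal version, and for $(1)\Rightarrow(2)$ both pass to a minimal right determiner $C_0\in\C_r$ via Corollary~\ref{cor:min.det}, invoke Theorem~\ref{thm:exist} with $H=\Im\C(C_0,\alpha)$, and then show the resulting $\s$-deflation is right equivalent to $\alpha$ using that both are right $C_0$-determined with equal images under $\C(C_0,-)$. The only cosmetic difference is in the last step: the paper extracts $K'$ as a summand of $K_0$ by building explicit morphisms of $\s$-triangles between $\alpha'$ and $\alpha_0$ and invoking right minimality together with \cite[Corollary~3.6]{NP}, whereas you decompose $X_0\cong X_0'\oplus N$ with $\alpha_0=(\alpha_0',0)$ and use additivity of $\s$ to get $K_0\cong K'\oplus N$; both arguments are valid.
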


\begin{proof}
$(1)\Rightarrow (2)$  We may assume that $C$ is a minimal right determiner of $\alpha$. By Corollary~\ref{cor:min.det},
we have $C\in\C_r$. We observe that $\Im\C(C,\alpha)$ is a right $\End(C)$-submodule of $\C(C,Y)$. Since $\alpha$ is an $\s$-deflation,
we have $\CP(C,Y)\subseteq\Im\C(C,\alpha)$. By Theorem~\ref{thm:exist}, there exists an $\s$-triangle
$$\xymatrix@C=0.5cm{K\ar[r]&Z\ar[r]^{\beta}&Y\ar@{-->}[r]^{\rho}&}$$ such that $\beta$ is right $C$-determined,
$K\in\add(\tau C)$ and $\Im\C(C,\alpha)=\Im\C(C,\beta)$. For any $f\in\C(C,X)$ and $g\in\C(C,X')$, we have that
$\alpha\circ f$ and $\beta\circ g$ factor through $\beta$ and $\alpha$ respectively. Since $\alpha$ and $\beta$ are right $C$-determined,
$\alpha$ and $\beta$ factor through each other. It follows that $\alpha$ and $\beta$ are right equivalent.

Let $\alpha'\colon X'\to Y$ be the right minimal version of $\alpha$. Then $\alpha'$ and $\beta$ are right equivalent.  Let
$$\xymatrix@C=0.5cm{K'\ar[r]&X'\ar[r]^{\alpha'}&Y\ar@{-->}[r]^{\delta}&}$$ be an $\s$-triangle.
Then $K'$ is an intrinsic weak kernel of $\alpha'$. Since $\alpha'$ and $\beta$ are right equivalent, there exist $s: X'\to Z$ and $t: Z\to X'$
such that $\alpha'=\beta \circ s$ and $\beta=\alpha'\circ t$, and hence we have the following morphisms of $\s$-triangles
\[
\xymatrix{
K'\ar[r]\ar[d]^{s'}&X'\ar[r]^{\alpha'}\ar[d]^s&Y\ar@{-->}[r]^{\delta}\ar@{=}[d]&\\
K\ar[r]\ar[d]^{t'}&Z\ar[r]^{\beta}\ar[d]^t&Y\ar@{-->}[r]^{\rho}\ar@{=}[d]&\\
K'\ar[r]&X'\ar[r]^{\alpha'}&Y\ar@{-->}[r]^{\delta}&.
}
\]
Since $\alpha'$ is right minimal, $t\circ s$ is isomorphic, and so is $t'\circ s'$. Thus $K'$ is a direct summand of $K$.
The assertion follows since $K \in \add (\tau C)$ and $\tau C \in \C_l$.

$(2)\Rightarrow (1)$  Let $\alpha'\colon X'\to Y$ be the right minimal version of $\alpha$. Then $\alpha'$ is an $\s$-deflation. Let
$$\xymatrix@C=0.5cm{K'\ar[r]&X'\ar[r]^{\alpha'}&Y\ar@{-->}[r]^{\delta}&}$$ be an $\s$-triangle. Then $K'\in\C_l$ by (2).
By Lemma~\ref{lem:ker.det}, we have that $\alpha'$ is right $\tau^-(K')$-determined. It follows that $\alpha$ is right $\tau^-(K')$-determined
since $\alpha$ and $\alpha'$ are right equivalent.
\end{proof}

\subsection{Characterizations for objects in $\C_r$}

In this subsection, we will give some characterizations for an object lying in $\C_r$ via morphisms determined by objects.

\begin{proposition}\label{prop:C_l}
For an object $K$ without non-zero $\s$-injective direct summands, the following statements are equivalent.
\begin{enumerate}
\item[$(1)$] $K\in\C_l$.
\item[$(2)$] $K$ is an intrinsic weak kernel of some  $\s$-deflation $\alpha\colon X\to Y$, which is right $C$-determined for some object $C$.
\end{enumerate}
\end{proposition}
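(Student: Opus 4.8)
The plan is to establish the two implications separately, drawing on the two main theorems of this section, namely Theorem~\ref{thm:det} and Theorem~\ref{thm:exist}, together with the quasi-inverse equivalence between $\underline{\C_r}$ and $\overline{\C_l}$ from Theorem~\ref{prop:quasi-inverse}. The statement is essentially a ``converse'' packaging of the results we already have: Theorem~\ref{thm:det} tells us that $\s$-deflations determined by objects are exactly those whose intrinsic weak kernel lies in $\C_l$, and here we want to see that \emph{every} object of $\C_l$ (without non-zero $\s$-injective summands) actually arises this way.

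For the implication $(1)\Rightarrow(2)$, I would start from $K\in\C_l$ and produce a suitable $\s$-deflation. The natural candidate is to take $C:=\tau^-K$, which lies in $\C_r$ since $\tau^-$ maps $\overline{\C_l}$ to $\underline{\C_r}$ by Proposition~\ref{lemma:Y_simeq_tau^- tau Y}(2). Now I would apply Theorem~\ref{thm:exist} with this $C$ and with a judicious choice of target object $Y$ and submodule $H$, arranging that the resulting $\s$-triangle
$$\xymatrix@C=0.5cm{K'\ar[r]&X\ar[r]^\alpha&Y\ar@{-->}[r]^\eta&}$$
has $\alpha$ right $C$-determined and intrinsic weak kernel equal (up to direct summands) to $K$. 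The cleanest route is to realize $K$ directly as an intrinsic weak kernel: since $\tau^-K\in\C_r$, we have $\tau\tau^- K\simeq K$ in $\overline\C$, and $K$ has no non-zero $\s$-injective summands, so $K\in\add(\tau C)$ is automatic once $C=\tau^- K$. I would then invoke the right-minimal-version machinery (available since $\C$ is Krull--Schmidt, via \cite{Bi}) together with Lemma~\ref{lem:ker.det}, which guarantees that an $\s$-deflation with kernel in $\C_l$ is right $\tau^-(\text{kernel})$-determined. The main technical care is to ensure that after passing to the right minimal version the intrinsic weak kernel remains exactly $K$ rather than a proper summand; this is where the hypothesis that $K$ has no $\s$-injective direct summands does its work, pairing with the fact that $\tau C$ is never $\s$-injective.

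The implication $(2)\Rightarrow(1)$ is the direct half and should follow almost immediately from Theorem~\ref{thm:det}. If $K$ is an intrinsic weak kernel of an $\s$-deflation $\alpha$ that is right $C$-determined for some $C$, then by definition there is a right minimal version $\alpha'\colon X'\to Y$ of $\alpha$ with an $\s$-triangle realizing $K$ as its kernel, and Theorem~\ref{thm:det}$(1)\Rightarrow(2)$ gives that the intrinsic weak kernel lies in $\C_l$; since the intrinsic weak kernel is well-defined up to isomorphism (via the right minimal version), we conclude $K\in\C_l$.

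The step I expect to be the main obstacle is the precise matching in $(1)\Rightarrow(2)$ between the kernel produced by Theorem~\ref{thm:exist} (which only guarantees $K'\in\add(\tau C)$) and the given object $K$ itself. Theorem~\ref{thm:exist} controls the image $H=\Im\C(C,\alpha)$ but not the kernel on the nose, so I will likely need to either choose $H$ carefully or argue directly that, starting from the natural isomorphism $\psi_K\colon\Ccu(\tau^-K,-)\to D\E(-,K)$ defining $\tau^-K$, one can construct an $\s$-triangle $\xymatrix@C=0.5cm{K\ar[r]&X\ar[r]^\alpha&Y\ar@{-->}[r]&}$ whose first term is exactly $K$ and then verify right $\tau^-K$-determinacy via Lemma~\ref{lem:ker.det}. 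Reconciling ``$\add(\tau C)$ up to summands'' with ``$K$ on the nose'' using the no-$\s$-injective-summand hypothesis and the equivalence $\tau\tau^-K\simeq K$ is the delicate point, and I would address it by passing to right minimal versions and invoking \cite[Corollary~3.6]{NP} on summands of intrinsic weak kernels, exactly as in the proof of Theorem~\ref{thm:det}.
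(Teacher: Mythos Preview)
Your treatment of $(2)\Rightarrow(1)$ is correct and matches the paper: it is an immediate consequence of Theorem~\ref{thm:det}.

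For $(1)\Rightarrow(2)$, however, your proposal has a genuine gap that you yourself identify but do not close. Invoking Theorem~\ref{thm:exist} with $C=\tau^-K$ only yields an $\s$-triangle whose first term lies in $\add(\tau C)\simeq\add K$; there is no mechanism in that theorem for prescribing the kernel on the nose, since the kernel produced is $(\tau C)^n$ where $n$ is the number of $\End_\C(C)$-generators of $\underline H^\perp$, a quantity you have no direct control over through the choice of $H$. Your fallback suggestion---to construct directly an $\s$-triangle with first term exactly $K$ and then apply Lemma~\ref{lem:ker.det}---is the right shape, but you never say what that $\s$-triangle is.

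The paper resolves this cleanly by going through almost split $\s$-triangles rather than Theorem~\ref{thm:exist}. Decompose $K=\bigoplus_{i=1}^n K_i$ into indecomposables; each $K_i$ is non-$\s$-injective by hypothesis, so Proposition~\ref{prop:C_r and C_l description}(2) supplies an almost split $\s$-triangle
\[
\xymatrix@C=0.5cm{K_i\ar[r]&X_i\ar[r]^{\alpha_i}&Y_i\ar@{-->}[r]&}.
\]
The direct sum $\alpha:=\bigoplus_i\alpha_i$ is an $\s$-deflation with first term exactly $K$, and Lemma~\ref{lem:ker.det} makes it right $\tau^-K$-determined. The crucial point you were missing is right minimality: each $\alpha_i$ is right minimal (being right almost split), hence so is $\alpha$, and therefore $K$ is literally the intrinsic weak kernel of $\alpha$, not merely a summand of it. This bypasses entirely the ``$\add(\tau C)$ versus $K$ on the nose'' difficulty you flagged.
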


\begin{proof}
$(1)\Rightarrow (2)$ Let $K\in\C_l$. Decompose $K$ as the direct sum of indecomposable objects $K_1,K_2,\cdots,K_n$.
We have that each $K_i$ is non-$\s$-injective. Then, by Proposition \ref{prop:C_r and C_l description}, for each $i=1,2,\cdots,n$,
there exists an almost split $\s$-triangle
$$\xymatrix@C=0.5cm{K_i\ar[r]&X_i\ar[r]^{\alpha_i}&Y_i\ar@{-->}[r]^{\rho_i}&}.$$
We have that $\bigoplus_{i=1}^n \alpha_i$ is an $\s$-deflation. It follows from Lemma~\ref{lem:ker.det} that
$\bigoplus_{i=1}^n \alpha_i$ is right $\tau^-K$-determined. Moreover, since $\alpha_i$ is right minimal, so is $\bigoplus_{i=1}^n \alpha_i$.
It follows that $K$ is an intrinsic weak kernel of $\bigoplus_{i=1}^n \alpha_i$.

$(2)\Rightarrow (1)$ It follows from Theorem~\ref{thm:det}.
\end{proof}

The following lemma is the converse of Theorem~\ref{thm:exist}.

\begin{lemma}\label{lem:C_r}
Let $C\in\C$. If for each $Y\in\C$ and each right $\End(C)$-submodule $H$ of $\C(C,Y)$ satisfying $\CP(C,Y)\subseteq H$,
there exists a right $C$-determined $\s$-deflation $\alpha\colon X\to Y$ such that $\Im\C(C,\alpha)=H$,
then $C\in\C_r$.
\end{lemma}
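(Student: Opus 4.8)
The plan is to show that every indecomposable direct summand of $C$ lies in $\C_r$; since $\C_r$ is additive and closed under direct summands, this yields $C\in\C_r$. An indecomposable summand $C_i$ that is $\s$-projective lies in $\C_r$ automatically, because then $\E(C_i,-)=0$, so the functor $D\E(C_i,-)$ is zero and hence (trivially) representable. Thus it suffices to treat a non-$\s$-projective indecomposable summand $C_i$ of $C$, and by Proposition~\ref{prop:C_r and C_l description}(1) it is enough to produce an almost split $\s$-triangle ending at $C_i$.

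To build such a triangle I would apply the hypothesis to the object $Y=C$ and to the submodule $H:=\radc(C,C)+\CP(C,C)$ of $\C(C,C)$. One checks that $H$ is a right $\Endc(C)$-submodule containing $\CP(C,C)$: both summands are right ideals of $\Endc(C)$, the first because $\radc$ is an ideal and the second because $\CP$ is an ideal. The hypothesis then yields a right $C$-determined $\s$-deflation $\alpha\colon X\to C$ with $\Im\C(C,\alpha)=H$; here $\Im\C(C,\alpha)$ is precisely the set of endomorphisms of $C$ that factor through $\alpha$.

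The core of the argument is to show that the inclusion $\iota_i\colon C_i\to C$ of such a summand almost factors through $\alpha$, in the sense recalled just before Proposition~\ref{prop:ras}. First, $\iota_i$ does not factor through $\alpha$: otherwise the idempotent $e_i=\iota_i\pi_i$ would lie in $\Im\C(C,\alpha)=H$, and passing to the $(i,i)$-entry would give $\id_{C_i}=r+p$ with $r\in\radc(C_i,C_i)=\rad\Endc(C_i)$ and $p\in\CP(C_i,C_i)$; since $\Endc(C_i)$ is local, $p=\id_{C_i}-r$ is invertible, whence $\id_{C_i}\in\CP$ and $C_i$ is $\s$-projective, contrary to assumption. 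Second, for any object $T$ and any $h\in\radc(T,C_i)$ I would show $\iota_i\circ h$ factors through $\alpha$ by exploiting that $\alpha$ is right $C$-determined: for every $w\in\C(C,T)$ the composite $\iota_i\circ h\circ w$ lies in $\radc(C,C)\subseteq H=\Im\C(C,\alpha)$ (as $\radc$ is an ideal), hence factors through $\alpha$; right $C$-determinedness then forces $\iota_i\circ h$ itself to factor through $\alpha$.

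With $\iota_i$ almost factoring through the $\s$-deflation $\alpha$ and $C_i$ indecomposable, Lemma~\ref{lem:exist.ass} produces an almost split $\s$-triangle ending at $C_i$, and Proposition~\ref{prop:C_r and C_l description}(1) gives $C_i\in\C_r$; running this over all indecomposable summands yields $C\in\C_r$. I expect the main obstacle to be the first half of the third paragraph—verifying $e_i\notin H$—since this is exactly where the interplay between $\s$-projectivity, the radical, and the Krull--Schmidt/locality hypotheses must be used carefully. Indeed, the choice $H=\radc(C,C)+\CP(C,C)$ is dictated by the simultaneous need to keep $\CP(C,C)\subseteq H$, to make $\alpha$ a non-retraction (so that $\iota_i$ cannot factor through it), and to force each $\iota_i\circ h$ with $h$ radical into $H$.
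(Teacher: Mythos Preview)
Your argument is correct. The overall strategy matches the paper's: reduce to a non-$\s$-projective indecomposable summand, apply the hypothesis to obtain a right $C$-determined $\s$-deflation, exhibit a morphism that almost factors through it, and invoke Lemma~\ref{lem:exist.ass} and Proposition~\ref{prop:C_r and C_l description}.

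The implementation differs in one respect. The paper applies the hypothesis with target $Y=C'$ (the summand itself) and $H=\radc(C,C')$, after first observing that $\CP(C,C')\subseteq\radc(C,C')$: any $\s$-projective $f\colon C\to C'$ that were a retraction would force $\Id_{C'}\in\CP$, contradicting non-$\s$-projectivity of $C'$. This makes $H$ already contain $\CP(C,C')$ without an explicit sum, and the morphism shown to almost factor through $\alpha$ is simply $\Id_{C'}$. Your version takes $Y=C$ and $H=\radc(C,C)+\CP(C,C)$, then works with the inclusion $\iota_i$; the price is the idempotent argument showing $e_i\notin H$, which is exactly where you anticipated difficulty, but your reasoning there (invertibility of $\Id_{C_i}-r$ in the local ring $\Endc(C_i)$) is sound. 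The paper's choice is marginally cleaner since it sidesteps that computation, while your choice has the minor conceptual advantage of producing a single $\s$-deflation $\alpha\colon X\to C$ that works simultaneously for all summands $C_i$.
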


\begin{proof}
It suffices to show that each non-$\s$-projective indecomposable direct summand $C'$ of $C$ lies in $\C_r$.

We claim that each $f\in\CP(C,C')$ is not a retraction. Otherwise, if some $f\in\CP(C,C')$
is a retraction, then there exists $g\in\C(C',C)$ such that $fg={\rm Id}_{C'}$.
Since $f$ is $\s$-projective, we have that ${\rm Id}_{C'}=fg$ is $\s$-projective, and hence
$C'$ is an $\s$-projective object, which is a contradiction. The claim is proved.
Notice that $\radc(C,C')$ is formed by non-retractions,
so $\CP(C,C')\subseteq\radc(C,C')$. Because $\radc(C,C')$ is a right $\End(C)$-submodule of $\C(C,C')$, by  assumption
there exists a right $C$-determined $\s$-deflation $\alpha\colon X\to C'$ such that $\radc(C,C')=\Im\C(C,\alpha)$.

Since $\Im\C(C,\alpha) = \radc(C,C')$ is a proper submodule of $\C(C,C')$, the $\s$-deflation $\alpha$ is not a retraction.
Thus $\Id_{C'}$ does not factor through $\alpha$. Let $f\in\radc(T,C')$. For each $g\colon C\to T$,
the morphism $f\circ g$ lies in $\radc(C,C')=\Im\C(C,\alpha)$. It follows that $f\circ g$ factors through $\alpha$.
Since $\alpha$ is right $C$-determined, we have that $f$ factors through $\alpha$ and $\Id_{C'}$ almost factors through $\alpha$.
By Lemma~\ref{lem:exist.ass}, there exists an almost split $\s$-triangle ending at $C'$.
Now the assertion follows from Proposition~\ref{prop:C_r and C_l description}.
\end{proof}

Collecting the results obtained so far, we list some characterizations for an object lying in $\C_r$.

\begin{theorem}\label{thm:C}
For any $C\in\C$, the following statements are equivalent.
\begin{enumerate}
\item[$(1)$]\label{item:thm:C:1}
$C\in\C_r$.
\item[$(2)$]\label{item:thm:C:2}
For each $Y\in\C$ and each right $\End(C)$-submodule $H$ of $\C(C,Y)$ satisfying $\CP(C,Y)\subseteq H$, there exists a
right $C$-determined $\s$-deflation $\alpha\colon X\to Y$ such that $H=\Im\C(C,\alpha)$.
\end{enumerate}
If moreover $C$ is non-$\s$-projective indecomposable, then all statements (1)--(6) are equivalent.
\begin{enumerate}
\setcounter{enumi}{3}
\item[$(3)$]\label{item:thm:C:4}
$C$ is an intrinsic weak cokernel of some $\s$-inflation $\alpha\colon X\to Y$ which is  left $K$-determined for some object $K$.
\item[$(4)$]\label{item:thm:C:5}
There exists an almost split $\s$-triangle ending at $C$.
\item[$(5)$]\label{item:thm:C:6}
There exists a non-retraction $\s$-deflation which is right $C$-determined.
\item[$(6)$]\label{item:thm:C:7}
There exists an $\s$-deflation $\alpha\colon X\to Y$ and a morphism $f\colon C\to Y$ such that $f$ almost factors through $\alpha$.
\end{enumerate}
\end{theorem}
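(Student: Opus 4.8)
The plan is to prove the two universal equivalences first and then, in the indecomposable non-$\s$-projective case, fold the remaining conditions in through a short cycle. The equivalence (1)$\Leftrightarrow$(2) holds for every $C$: the implication (1)$\Rightarrow$(2) is exactly Theorem~\ref{thm:exist}, and (2)$\Rightarrow$(1) is Lemma~\ref{lem:C_r}. From now on I assume $C$ is indecomposable and non-$\s$-projective. Then (1)$\Leftrightarrow$(4) is immediate from Proposition~\ref{prop:C_r and C_l description}(1), and (1)$\Leftrightarrow$(3) is obtained by dualizing Proposition~\ref{prop:C_l}, whose dual reads: for an object with no non-zero $\s$-projective direct summand, membership in $\C_r$ is equivalent to being an intrinsic weak cokernel of some left $K$-determined $\s$-inflation. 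Since $C$ is indecomposable and non-$\s$-projective it has no non-zero $\s$-projective summand, so the dual statement applies verbatim. It therefore remains to incorporate (5) and (6), which I would do via the cycle (4)$\Rightarrow$(5)$\Rightarrow$(6)$\Rightarrow$(4).

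For (4)$\Rightarrow$(5), start from an almost split $\s$-triangle $\xymatrix@C=0.5cm{K\ar[r]&E\ar[r]^r&C\ar@{-->}[r]^\eta&}$. Its first term $K$ is indecomposable and non-$\s$-injective, for if $K$ were $\s$-injective then $\E(C,K)=0$, contradicting $\eta\neq 0$. Applying Lemma~\ref{lemma:representable}(2) to this triangle yields a natural isomorphism $\Ccu(C,-)\simeq D\E(-,K)$, whence $K\in\C_l$; comparing with the defining isomorphism $\Ccu(\tau^-K,-)\simeq D\E(-,K)$ of $\tau^-$ and invoking the Yoneda lemma gives $\tau^-K\simeq C$ in $\underline\C$. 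By Lemma~\ref{lem:ker.det} the $\s$-deflation $r$ is right $\tau^-K$-determined, hence right $C$-determined by Proposition~\ref{prop:det}. Since $r$ is right almost split it is a non-retraction, which is (5). This step uses only the data of an almost split $\s$-triangle and introduces no circularity.

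For (6)$\Rightarrow$(4), Lemma~\ref{lem:exist.ass} directly produces an almost split $\s$-triangle ending at $C$ out of an $\s$-deflation $\alpha\colon X\to Y$ and a morphism $f\colon C\to Y$ almost factoring through $\alpha$, using that $C$ is indecomposable. The decisive step is (5)$\Rightarrow$(6). Here I would pass to the minimal right determiner $C_0$ of the given non-retraction right $C$-determined $\s$-deflation $\alpha\colon X\to Y$: by definition $C_0$ is a direct summand of the right determiner $C$, so indecomposability of $C$ forces $C_0=0$ or $C_0\cong C$. The case $C_0=0$ is impossible, since a right $0$-determined morphism has every morphism into $Y$ factoring through it and would make $\alpha$ a retraction. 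Thus $C_0\cong C$, and the summand characterization recorded just before Corollary~\ref{cor:min.det}, namely that an indecomposable object is a summand of the minimal right determiner of $\alpha$ exactly when some morphism from it to $Y$ almost factors through $\alpha$, produces the desired $f\colon C\to Y$, giving (6).

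The main obstacle is precisely (5)$\Rightarrow$(6): it rests on the existence of a minimal right determiner for $\alpha$ together with the Auslander-type summand characterization imported from \cite[Proposition~XI.2.4 and Lemma~XI.2.1]{Au95}, and checking that this machinery genuinely transports to the extriangulated setting (existence of minimal right determiners and their detection by almost-factorization) is where the real content lies. A secondary technical point is verifying the finiteness hypothesis $\Ccu(C,M)\in k\text{-mod}$ required to apply Lemma~\ref{lemma:representable}(2) in (4)$\Rightarrow$(5); this follows from the standing Ext-finite, Krull--Schmidt and $k$-linear assumptions. Everything else reduces to citing Theorems~\ref{thm:exist} and \ref{thm:det}, Lemmas~\ref{lem:exist.ass}, \ref{lem:ker.det}, \ref{lem:C_r}, and Propositions~\ref{prop:C_r and C_l description}, \ref{prop:det}, \ref{prop:C_l}.
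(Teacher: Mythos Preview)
Your proposal is correct and matches the paper's proof in structure and in the lemmas invoked. The one place you work harder than necessary is (4)$\Rightarrow$(5): the paper observes directly that a right almost split $\s$-deflation $r\colon E\to C$ is right $C$-determined (if $g\colon T\to C$ does not factor through $r$ then $g$ is a retraction, so $g\circ h=\Id_C$ for some $h$, and $\Id_C$ cannot factor through the non-retraction $r$), which bypasses your detour through $K\in\C_l$, $\tau^-K\simeq C$, Lemma~\ref{lem:ker.det} and Proposition~\ref{prop:det}.
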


\begin{proof}
By Theorem~\ref{thm:exist} and Lemma~\ref{lem:C_r}, we have (1) $\Leftrightarrow$ (2).

Assume that $C$ is non-$\s$-projective indecomposable. Then the dual of Proposition~\ref{prop:C_l} implies
$(1)\Leftrightarrow (3)$, and Proposition~\ref{prop:C_r and C_l description} implies $(1)\Leftrightarrow (4)$.
By Lemma~\ref{lem:exist.ass}, we have $(6)\Rightarrow (4)$.

It is easy to see that the right almost split $\s$-deflation ending at $C$ is a non-retraction and right $C$-determined. Then we have
$(4)\Rightarrow (5)$. Let $\alpha$ be a right $C$-determined $\s$-deflation which is not a retraction. We have that $C$ is a minimal
right determiner of $\alpha$. Thus $(5)\Rightarrow (6)$ holds true.
\end{proof}

\section{Examples}

\subsection{}

In \cite{INP}, Iyama, Nakaoka and Palu introduced the following notions in order to study the existence of almost split extensions.
Let $(\C,\E,\s)$ be a $k$-linear extriangulated category.

\begin{enumerate}[(a)]
 \item[(1)] A \emph{right Auslander-Reiten-Serre duality} is a pair $(\tau,\eta)$ of an additive functor
 $\tau\colon\underline{\C}\to\overline{\C}$ and a binatural isomorphism
\[\eta_{A,B}\colon\underline{\C}(A,B)\simeq D\E(B,\tau A)\ \mbox{ for any }\ A,B\in\C.\]
 \item[(2)] If moreover $\tau$ is an equivalence, then $(\tau,\eta)$ is called an \emph{Auslander-Reiten-Serre duality}.
\end{enumerate}

Following \cite[Proposition 3.5 and Theorem 3.4]{INP}, we have

\begin{proposition} Let $\C$ be a $k$-linear Ext-finite Krull-Schmidt extriangulated category.
\begin{enumerate}
\item[$(1)$] $\C=\C_r$ if and only if $\C$ has a right Auslander-Reiten-Serre duality $(\tau,\eta)$,
in which $\tau$ is fully faithful.
\item[$(2)$] $\C=\C_r=\C_l$ if and only if $\C$ has an Auslander-Reiten-Serre duality. In this case,
the Auslander-Reiten-Serre duality is exactly the pair $(\tau,\psi)$ defined in Section 3.2.
\end{enumerate}
\end{proposition}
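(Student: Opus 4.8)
The whole proof rests on the equivalence $\tau\colon\underline{\C_r}\to\overline{\C_l}$ of Theorem~\ref{prop:quasi-inverse}, and the strategy is to recognise the equalities $\C=\C_r$ and $\C=\C_r=\C_l$ as exactly the conditions under which this equivalence is defined on all of $\underline\C$ (respectively also surjects onto all of $\overline\C$). I would begin with the easy implication of (1): suppose a right Auslander--Reiten--Serre duality $(\tau,\eta)$ is given. Fixing $A$ and letting $B$ vary, the binatural isomorphism $\eta_{A,-}\colon\Ccu(A,-)\simeq D\E(-,\tau A)$ exhibits $D\E(-,\tau A)\colon\underline\C\to k$-mod as representable, so $\tau A\in\C_l$ by definition. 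Hence $\tau^-(\tau A)$ is defined, and comparing $\eta_{A,-}$ with the defining isomorphism $\psi_{\tau A}\colon\Ccu(\tau^-\tau A,-)\simeq D\E(-,\tau A)$, the Yoneda lemma yields $\tau^-\tau A\simeq A$ in $\underline\C$. Since $\tau^-\tau A\in\underline{\C_r}$ by Proposition~\ref{lemma:Y_simeq_tau^- tau Y}(2), Proposition~\ref{lemma:closed under iso}(1) forces $A\in\C_r$; as $A$ was arbitrary, $\C=\C_r$. Note this argument uses only the existence of $\eta$, not that $\tau$ is fully faithful.

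For the reverse implication of (1), assume $\C=\C_r$, so $\underline{\C_r}=\underline\C$ and, by Proposition~\ref{lemma:Y_simeq_tau^- tau Y}(1), $\tau A\in\C_l$ for every $A$. Theorem~\ref{prop:quasi-inverse} then makes $\tau$ an equivalence $\underline\C\to\overline{\C_l}$, and composing with the fully faithful inclusion $\overline{\C_l}\hookrightarrow\overline\C$ produces the required fully faithful functor $\tau\colon\underline\C\to\overline\C$. For the datum $\eta$ I would set, for each $A$ and $B$,
\[
\eta_{A,B}\colon\Ccu(A,B)\xrightarrow{\ \Ccu(\theta_A,B)\ }\Ccu(\tau^-\tau A,B)\xrightarrow{\ (\psi_{\tau A})_B\ }D\E(B,\tau A),
\]
where $\theta_A\colon\tau^-\tau A\to A$ is the counit isomorphism of Theorem~\ref{prop:quasi-inverse}. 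Naturality in $B$ is immediate from that of $\psi_{\tau A}$; the one genuinely technical point is naturality in $A$, which I would check by a diagram chase feeding the naturality of $\theta$ and of $\psi$ (in its $X$-variable) into the square defining $\tau(\underline f)$, together, if convenient, with the adjunction identities of Proposition~\ref{adjoint}. This establishes $(\tau,\eta)$ as a right Auslander--Reiten--Serre duality with $\tau$ fully faithful.

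Part (2) is then formal. If $\C$ has an Auslander--Reiten--Serre duality, it has in particular a right one, so $\C=\C_r$ by Part~(1); moreover $\tau\colon\underline\C\to\overline\C$ is now an equivalence, hence essentially surjective, and since every object of $\overline\C$ is costably isomorphic to some $\tau A\in\C_l$, Proposition~\ref{lemma:closed under iso}(2) gives $\C=\C_l$ as well. Conversely, if $\C=\C_r=\C_l$ then $\underline{\C_r}=\underline\C$ and $\overline{\C_l}=\overline\C$, so the equivalence of Theorem~\ref{prop:quasi-inverse} is already an equivalence $\tau\colon\underline\C\to\overline\C$; combined with the binatural $\eta$ from Part~(1)---which is precisely $\psi$ read through the identification $\tau^-\tau A\simeq A$---this is the asserted Auslander--Reiten--Serre duality $(\tau,\psi)$. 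The only real obstacle anywhere is the binaturality check flagged above; everything else is bookkeeping with the already-established equivalence and the closure properties of $\C_r$ and $\C_l$.
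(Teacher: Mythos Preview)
Your argument is correct, but it is worth noting that the paper does not actually supply a proof of this proposition: it merely states the result as a direct consequence of \cite[Proposition~3.5 and Theorem~3.4]{INP}. So there is no in-house proof to compare your proposal against.

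That said, your approach is precisely the natural one \emph{internal to this paper}: you reduce everything to Theorem~\ref{prop:quasi-inverse} and the closure properties of $\C_r,\C_l$ under (co)stable isomorphism (Propositions~\ref{lemma:closed under iso} and~\ref{lemma:Y_simeq_tau^- tau Y}). The representability trick for the ``if'' direction of (1)---showing $\tau A\in\C_l$ from $\eta_{A,-}$, then pulling $A$ back into $\C_r$ via $\tau^-$ and Yoneda---is clean, and your observation that this step does not use full faithfulness of the given $\tau$ is a genuine sharpening worth recording. For the converse, your construction of $\eta$ as $\psi_{\tau A}\circ\Ccu(\theta_A,-)$ is the right one; the binaturality-in-$A$ check you flag is indeed the only nontrivial verification, and it goes through by unwinding the definition of $\tau(\underline f)$ together with the naturality of $\theta$ established in Theorem~\ref{prop:quasi-inverse}. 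Part (2) is, as you say, formal once (1) is in hand. In short: the paper outsources the proof to \cite{INP}, whereas you have reconstructed it from the paper's own Section~3 machinery; the latter has the advantage of being self-contained.
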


Let $A$ be a finite-dimensional algebra over a field $k$ and $A\text{-mod}$ the category
of finitely generated left $A$-modules. We use $\mathcal{P}(A)$ to denote the subcategory of $A\text{-mod}$
consisting of projective modules, and use ${\mathcal{GP}}(A)$ to denote the subcategory of $A\text{-mod}$
consisting of Gorenstein projective modules.

\begin{example}
\begin{itemize}
\item[]
\item[$(1)$] It is well known that $A\text{-mod}$ has an Auslander-Reiten-Serre duality.
Moreover, if $A$ is self-injective, then the stable category $A$-$\underline{{\rm mod}}$ has
an Auslander-Reiten-Serre duality (\cite{Hap}).
\item[$(2)$] If $A$ is Gorenstein (that is, the left and right self-injective dimensions of $A$ are finite),
then the stable category $\underline{\mathcal{GP}}(A)$
has an Auslander-Reiten-Serre duality. In fact, since ${\mathcal{GP}}(A)$ is an extension-closed
functorially finite subcategory of $A\text{-mod}$, ${\mathcal{GP}}(A)$ has almost split sequences,
and they induce almost split triangle in $\underline{\mathcal{GP}}(A)$. Moreover,
if $A$ is Gorenstein and ${\mathcal{GP}}(A)$ is of finite type, then the Gorenstein derived category $D^b_{gp}(A)$
has an Auslander-Reiten-Serre duality (\cite{Gao}).
\item[$(3)$] If the global dimension of $A$ is finite, then the bounded derived category
$D^b(A)$  has an Auslander-Reiten-Serre duality (\cite{Hap}).
\item[$(4)$] If $\mathcal{X}$ is an extension-closed functorially finite subcategory of the bounded homotopy category
$K^b(\mathcal{P}(A))$, then $\mathcal{X}$ has an Auslander-Reiten-Serre duality \cite[Proposition 6.1]{INP}.
\end{itemize}
\end{example}

\subsection{}
Let $Q=(Q_0,Q_1)$ be a quiver, where $Q_0$ consists of all vertices, and $Q_1$ consists of all arrows.
For $x\in Q_0$, we use the symbol $x^+$ (resp. $x^-$) to denote the set of arrows starting (resp. ending) with $x$.
For $x,y\in Q_0$, let $Q(x,y)$ stand for the set of paths in $Q$ from $x$ to $y$. Then
\begin{itemize}
\item $Q$ is called \emph{locally finite} if $x^+$ and $x^-$ are finite for each $x\in Q_0$.
\item $Q$ is called \emph{interval-finite} if $Q(x,y)$ is finite for each pair $x,y\in Q_0$.
\item $Q$ is called \emph{strongly locally finite} if it is locally finite and interval-finite.
\end{itemize}

Let $k$ be a field. A $k$-representation $M$ of a quiver $Q$ consists of a family of $k$-spaces $M_x$
with $x\in Q_0$, and a family of $k$-maps $M_\alpha: M_x\rightarrow M_y$ with $\alpha:x\rightarrow y$ in $Q_1$.
A $k$-representation $M$ is called {\it locally finite-dimensional} if $M_x$ is of finite
$k$-dimensional for each $x\in Q_0$, and called {\it finite-dimensional} if $\sum_{x\in Q_0}{\rm dim}M_x<\infty$.

In the following, we will use frequently three kinds of $k$-representations $S_a$, $P_a$ and $I_a$ which are defined by the following way:
\begin{itemize}
\item $S_a$ is the $k$-representation such that $S_a(a)=k$ and $S_a(x)=0$ for any $x\in Q_0\setminus \{a\}$.
\item $P_a$ is the $k$-representation such that $P_a(x)$, for any $x\in Q_0$, is the $k$-space spanned by $Q(a,x)$ and
$P_a(\alpha):P_a(x)\to P_a(y)$, for any $\alpha: x\to y\in Q_1$, is the $k$-map sending every path $p$ to $\alpha p$.
\item $I_a$ is the $k$-representation such that $I_a(x)$, for any $x\in Q_0$, is the $k$-space spanned by $Q(x,a)$ and
$I_a(\alpha):I_a(x)\to I_a(y)$, for any $\alpha: x\to y\in Q_1$, is the $k$-map sending every $p\alpha$ to $p$ and vanishing on the
paths which do not factor through $\alpha$.
\end{itemize}

We use the following notations:
\begin{itemize}
\item $\mbox{Rep}(Q):=$the category of all $k$-representations of $Q$.
\item $\mbox{rep}(Q):=$the full subcategory of $\mbox{Rep}(Q)$ consisting of all locally finite-dimensional $k$-representations of $Q$.
\item $\mbox{Inj}(Q):=$the full additive subcategory of  $\mbox{Rep}(Q)$ generated by the  objects isomorphic to $I_a\otimes V_a$
with $a\in Q_0$ and $V_a$ some $k$-space.
\item $\mbox{Proj}(Q):=$the full additive subcategory of  $\mbox{Rep}(Q)$ generated by the  objects isomorphic to $P_a\otimes U_a$
with $a\in Q_0$ and $U_a$ some $k$-space.
\item $\mbox{inj}(Q):=\mbox{Inj}(Q)\cap \mbox{rep}(Q)$ and $\mbox{proj}(Q):=\mbox{Proj}(Q)\cap \mbox{rep}(Q)$.
\item $\mbox{rep}^+(Q):=$the full subcategory of $\mbox{rep}(Q)$ consisting of all finitely presented $k$-representations of $Q$.
\item $\mbox{rep}^-(Q):=$the full subcategory of $\mbox{rep}(Q)$ consisting of all finitely co-presented $k$-representations of $Q$.
\end{itemize}
Here a $k$-representation $M$ is called \emph{finitely presented} if there exists an exact sequence
$$0\to P_1\to P_0\to M\to 0$$ with $P_0,P_1\in\mbox{proj}(Q)$, and \emph{finitely co-presented}
if there exists an exact sequence
$$0\to M\to I_0\to I_1\to 0$$ with $I_0,I_1\in\mbox{inj}(Q)$.

By \cite[(1.15)]{BLP}, ${\rm rep}^+(Q)$ and ${\rm rep}^-(Q)$ are Hom-finite $k$-linear abelian categories,
and hence they are extriangulated categories in which the corresponding $\E$ and $\s$ were given in \cite[Example 2.13]{NP}.

Following \cite[Theorem 5.2]{LNP} and  Proposition \ref{prop:C_r and C_l description}, we have
\begin{theorem}
Let $Q$  be a strongly locally finite quiver (for example, type of $\mathbb{A}_\infty$ or $\mathbb{A}^\infty_\infty$)
and $M$ an indecomposable $k$-representation in  ${\rm rep}(Q)$.
\begin{itemize}
\item [$(1)$] If $M\in{\rm rep}^+(Q)$ is not projective, then $M\in{\rm rep}^+(Q)_r$ if and only if $\tau M$ is finite-dimensional.
\item [$(2)$] If $N\in{\rm rep}^-(Q)$ is not injective, then $N\in{\rm rep}^-(Q)_l$ if and only if $\tau^- M$ is finite-dimensional.
\end{itemize}
Here $\tau$ and $\tau^-$ are defined to be the Auslander-Reiten translations $D{\rm Tr}$ and ${\rm Tr}D$ (see \cite[Definition 2.4]{BLP}).
\end{theorem}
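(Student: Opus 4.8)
The plan is to reduce the statement, via the general theory of Section~3, to a known characterization of the existence of almost split sequences in ${\rm rep}^+(Q)$ and ${\rm rep}^-(Q)$. First I would record that ${\rm rep}^+(Q)$ (and dually ${\rm rep}^-(Q)$) satisfies the standing hypotheses imposed at the end of Section~2.2. By \cite[(1.15)]{BLP} it is a Hom-finite $k$-linear abelian category; Hom-finiteness over the field $k$ forces the endomorphism ring of every object to be a finite-dimensional $k$-algebra, hence semiperfect, so the category is Krull--Schmidt, and finite-dimensionality of $\E={\rm Ext}^1$ gives Ext-finiteness. Equipping this abelian category with its canonical extriangulated structure (where $\E={\rm Ext}^1$ and $\s$ sends an extension to its short exact sequence), the $\s$-projective objects are precisely the projective representations, the $\s$-triangles are precisely the short exact sequences, and an almost split $\s$-triangle is precisely an almost split sequence in the classical sense.

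With these identifications in place, Proposition~\ref{prop:C_r and C_l description} applies directly. For part~(1), since $M$ is indecomposable and non-projective, hence non-$\s$-projective, Proposition~\ref{prop:C_r and C_l description}(1) gives that $M\in{\rm rep}^+(Q)_r$ if and only if there exists an almost split sequence ending at $M$, whose left-hand term is necessarily $D{\rm Tr}\,M=\tau M$. It then remains only to match the existence of such a sequence with the finite-dimensionality of $\tau M$. This is exactly \cite[Theorem~5.2]{LNP}: for an indecomposable non-projective $M$ in ${\rm rep}^+(Q)$ over a strongly locally finite quiver, an almost split sequence ending at $M$ exists in ${\rm rep}^+(Q)$ if and only if $D{\rm Tr}\,M$ is finite-dimensional. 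Concatenating the two equivalences proves~(1).

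Part~(2) is the formal dual. Replacing ${\rm rep}^+(Q)$ by ${\rm rep}^-(Q)$, projectives by injectives, and $D{\rm Tr}$ by ${\rm Tr}D$, one invokes Proposition~\ref{prop:C_r and C_l description}(2) to translate $N\in{\rm rep}^-(Q)_l$ into the existence of an almost split sequence starting at $N$, and then applies the dual assertion of \cite[Theorem~5.2]{LNP} to characterize this by the finite-dimensionality of ${\rm Tr}D\,N=\tau^- N$.

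The step I expect to be the main obstacle is not computational but the careful bookkeeping of these identifications: one must check that the extriangulated notions used in Section~3 (the $\s$-projective objects, the almost split $\s$-triangles, and in particular the translate $\tau$ constructed in Section~3.2) coincide on the nose with the classical module-theoretic data (projective representations, almost split sequences, and the Auslander--Reiten translate $D{\rm Tr}$ of \cite[Definition~2.4]{BLP}) appearing in \cite[Theorem~5.2]{LNP}. In particular the abstract $\tau$ of Section~3.2 is only defined on $\C_r$, whereas the theorem reads $\tau M=D{\rm Tr}\,M$ as the classical translate defined for all $M$; the content is that these agree precisely when $\tau M$ is finite-dimensional, which is what lets the two cited results be spliced together. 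Once this dictionary is pinned down, the proof is a two-step concatenation.
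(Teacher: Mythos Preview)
Your proposal is correct and follows essentially the same approach as the paper: the paper simply states ``Following \cite[Theorem 5.2]{LNP} and Proposition~\ref{prop:C_r and C_l description}, we have'' before the theorem, which is precisely the two-step concatenation you spell out. Your additional remarks verifying the standing hypotheses and aligning the abstract $\tau$ of Section~3.2 with the classical $D{\rm Tr}$ are accurate and make explicit what the paper leaves implicit.
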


Now let $Q$ be a quiver of type $\mathbb{A}_\infty$ as follows:
$$
1\rightarrow 2\leftarrow 3\rightarrow 4\rightarrow 5\rightarrow \cdots
$$
and $\C={\rm rep}^+(Q)$.
\begin{itemize}
\item [(1)]Let $C=S_1$ be non-projective indecomposable. Since $\tau S_1=S_2$ is finite-dimensional,
we have $S_1\in{\rm rep}^+(Q)_r$. Moreover, there exists an almost split sequence
$$\xymatrix@C=0.5cm{0\ar[r]&S_2\ar[r]^{\alpha}&P_1\ar[r]&S_1\ar[r]&0},$$
and $\alpha$ is left $S_2$-determined by the dual of Lemma \ref{lem:ker.det}.
\item [(2)]Clearly, $P_4$ is not injective in ${\rm rep}^+(Q)$. By \cite[Proposition 3.6]{BLP},
there exists no almost split sequence starting with $P_4$ in ${\rm rep}^+(Q)$. Thus, by
Proposition \ref{prop:C_r and C_l description}, $P_4\notin {{\rm rep}^+(Q)}_l$.
Consider the canonical epimorphism $p:P_4\rightarrow S_4$. It is easy to see that the intrinsic weak kernel of
$p$ is $P_5$ and $P_5\notin {{\rm rep}^+(Q)}_l$. Thus, by Theorem \ref{thm:det},
$p$ could not be right determined by any object in ${\rm rep}^+(Q)$.
\end{itemize}

Now we let $\C=D^b({\rm rep}^+(Q))$ be the derived category of the bounded complexes in ${\rm rep}^+(Q)$.
Following \cite[Theorem 7.11]{BLP}, we have

\begin{theorem}
\begin{itemize}
\item[]
\item[$(1)$] $D^b({\rm rep}^+(Q))_l=D^b({\rm rep}^+(Q))$ if and only if $Q$ has no left infinite path.
\item[$(2)$] $D^b({\rm rep}^+(Q))_r=D^b({\rm rep}^+(Q))$ if and only if $Q$ has no right infinite path.
\item[$(3)$] $D^b({\rm rep}^+(Q))_l=D^b({\rm rep}^+(Q))=D^b({\rm rep}^+(Q))_r$ if and only if $Q$ has no infinite path.
\end{itemize}
\end{theorem}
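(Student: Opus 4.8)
Write $\C:=D^b(\mathrm{rep}^+(Q))$. The plan is to use Proposition~\ref{prop:C_r and C_l description} to translate the equalities $\C=\C_l$ and $\C=\C_r$ into statements about the existence of almost split $\s$-triangles at every object, and then to read off the answer from \cite[Theorem 7.11]{BLP}. Since $\mathrm{rep}^+(Q)$ is a Hom-finite $k$-linear abelian category, its bounded derived category $\C$ is a Hom-finite, Krull--Schmidt, $k$-linear triangulated category (see \cite{BLP}), hence an Ext-finite Krull--Schmidt $k$-linear extriangulated category to which all of Section~3 applies.

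First I would record the feature special to the triangulated setting. For the extriangulated structure induced by the triangulation one has $\E(A,B)=\C(A,B[1])$, with $\s$ assigning to an $\E$-extension the distinguished triangle that realizes it. If $P$ is $\s$-projective, then $\C(P,B[1])=\E(P,B)=0$ for all $B$; evaluating at $B=P[-1]$ forces $\C(P,P)=0$, so $P=0$, and dually the only $\s$-injective object is $0$. Consequently every nonzero indecomposable object of $\C$ is at once non-$\s$-projective and non-$\s$-injective, so \emph{both} clauses of Proposition~\ref{prop:C_r and C_l description} apply to it; moreover an almost split $\s$-triangle in $\C$ is precisely an Auslander--Reiten triangle.

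Next I would pass from subcategories to indecomposables. As $\C$ is Krull--Schmidt and $\C_r,\C_l$ are additive and closed under direct summands, $\C=\C_r$ (resp.\ $\C=\C_l$) holds if and only if every indecomposable object lies in $\C_r$ (resp.\ $\C_l$). By the preceding paragraph together with Proposition~\ref{prop:C_r and C_l description}, this is equivalent to saying that every indecomposable object is the third term (resp.\ the first term) of an almost split $\s$-triangle, i.e.\ that $\C$ admits Auslander--Reiten triangles on the right (resp.\ on the left). I would then quote \cite[Theorem 7.11]{BLP}, which characterizes when $D^b(\mathrm{rep}^+(Q))$ has right (resp.\ left) Auslander--Reiten triangles at every object in terms of the absence of a right (resp.\ left) infinite path in $Q$; matching the two one-sided conditions yields (1) and (2). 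For (3), observe that $Q$ has no infinite path if and only if it has neither a left nor a right infinite path (any infinite path contains a one-sided infinite subpath), so (3) is the conjunction of (1) and (2).

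The delicate step, and the one I expect to be the main obstacle, is the bookkeeping of orientations: one must align the ``starting''/``ending'' convention for almost split $\s$-triangles---equivalently the side on which the translates $\tau,\tau^-$ of Section~3.2 are placed---with the ``left''/``right'' convention for Auslander--Reiten triangles and for infinite paths adopted in \cite{BLP}. Care is needed because the analogous conventions for the finitely presented category $\mathrm{rep}^+(Q)$ and for its bounded derived category need not coincide, since passing to $D^b$ interchanges the roles of projectives and injectives through the shift. I would therefore pin the matching down directly against the precise statement of \cite[Theorem 7.11]{BLP} and the functors $\tau,\tau^-$ of Theorem~\ref{prop:quasi-inverse}, rather than transport it from the $\mathrm{rep}^+(Q)$ computations above; once the two dictionaries are aligned, (1)--(3) follow at once.
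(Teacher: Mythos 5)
Your proposal is correct and takes essentially the same route as the paper: the paper gives no argument beyond ``Following [BLP, Theorem 7.11]'', and the intended proof is exactly your reconstruction---observe that in the triangulated case all nonzero indecomposables are non-$\s$-projective and non-$\s$-injective, use Krull--Schmidt plus closure of $\C_r$, $\C_l$ under summands to reduce to indecomposables, translate membership in $\C_r$ (resp.\ $\C_l$) into the existence of an almost split $\s$-triangle ending (resp.\ starting) there via Proposition~\ref{prop:C_r and C_l description}, and then quote the BLP characterization, with (3) being the conjunction of (1) and (2). Your extra care about matching the left/right conventions with \cite{BLP} is prudent but raises no gap.
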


\vspace{0.5cm}

{\bf Acknowledgements.} This work was partially supported by National Natural Science Foundation of China
(Nos. 11971225, 11571164, 11901341), a Project Funded
by the Priority Academic Program Development of Jiangsu Higher Education Institutions,
  the project ZR2019QA015 supported by Shandong Provincial Natural Science Foundation, and the Young Talents Invitation Program of Shandong Province. The first author thanks Pengjie Jiao for his help, and all authors thank the referee
for very useful suggestions.


\begin{thebibliography}{99}
\setlength{\itemsep}{-5pt}
\bibitem{AbNa}  N. Abe and H. Nakaoka, {\it General heart construction on a triangulated category (II):
Associated homological functor}, Appl. Categor. Struct. {\bf 20} (2012), 161--174.

\bibitem{Au78}
{M.~Auslander}, {\em Functors and morphisms determined by objects},
Representation Theory of Algebras ({P}roc. {C}onf., {T}emple {U}niv.,
{P}hiladelphia, {P}a., 1976), Lecture Notes in Pure Appl. Math. {\bf 37},
Dekker, New York, 1978, pp.1--244.

\bibitem{Au95}
{M.~Auslander, I.~Reiten and S.~O. Smal{\o}}, {Representation Theory
of {A}rtin Algebras}, Cambridge Studies in Adv. Math. {\bf 36},
Cambridge Univ. Press, Cambridge, 1995.

\bibitem{AR1} M.~Auslander and I.~Reiten, \emph{Representation theory of Artin algebras III:
Almost split sequences}. Comm. Algebra \textbf{3} (1975) 239--294.

\bibitem{AR2} M.~Auslander and I.~Reiten, \emph{Representation theory of Artin algebras IV:
Invariants given by almost split sequences}. Comm. Algebra \textbf{5} (1977), 443--518.

\bibitem{BLP} R. Bautista, S. Liu and C. Paquette, {\it Representation theory of strongly locally finite quivers},
Proc. London Math. Soc. (3) {\bf 106} (2013), 97--162.

\bibitem{Bi}
{N.~Bian}, \emph{Right minimal morphisms in arbitrary {K}rull-{S}chmidt
categories}, J. Math. (Wuhan) {\bf 29} (2009), 577--580.

\bibitem{Ch}  X.-W. Chen, \emph{Generalized Serre duality}, J. Algebra {\bf 328} (2011), 268--286.

\bibitem{Ch15} X.-W. Chen and J. Le, {\it A note on morphisms determined by objects}, J. Algebra
{\bf 428} (2015), 138-148.

\bibitem{Gao} N. Gao, {\it Auslander-Reiten triangles on Gorenstein derived categories},
Comm. Algebra {\bf 40} (2012), 3912--3919.

\bibitem{Hap} D. Happel, Triangulated Categories in the Representation Theory of Finite Dimensional
Algebras, London Math. Soc. Lecture Note Ser. {\bf 119}, Cambridge Univ. Press,
Cambridge, 1988.
\bibitem{HZZ} J. Hu, D. Zhang, P. Zhou, {\it Proper classes and Gorensteinness in extriangulated categories},
J. Algebra {\bf 551} (2020), 23--60.

\bibitem{INP} O. Iyama, H. Nakaoka and Y. Palu, \emph{Auslander-Reiten theory in extriangulated categories},
Preprint available at: arXiv:1805.03776.

\bibitem{J}
{P.~Jiao}, \emph{The generalized Auslander-Reiten duality on an exact category}, J. Algebra Appl.
{\bf 17}(12) (2018), 1850227 (14 pages).

\bibitem{JL} P. Jiao and J. Le, \emph{The Auslander-Reiten duality via morphisms determined by objects},
J. Pure  Appl. Algebra  {\bf 222} (2018), 807--817.

\bibitem{KZ}  S. Koenig and B. Zhu, {\it From triangulated categories to abelian categories: cluster tilting in a general
framework}, Math. Z. {\bf 258} (2008), 143--160.

\bibitem{Kr13}
{H.~Krause}, \emph{Morphisms determined by objects in triangulated
categories}, Algebras, Quivers and Representations, Abel Symp. {\bf 8},
Springer, Heidelberg, 2013, pp.195--207.

\bibitem{Kr15}
{H.~Krause}, \emph{Krull-Schmidt categories and projective covers}, Expo. Math. {\bf 33} (2015), 535--549.

\bibitem{LZ}
{H.~Lenzing and R.~Zuazua}, \emph{Auslander-{R}eiten duality for abelian categories},
Bol. Soc. Mat. Mexicana {\bf 10} (2004), 169--177.

\bibitem{LNP} S. Liu, P. Ng and C. Paquette, {\it Almost split sequences and approximations},
Algebr. Represent. Theory {\bf 16} (2013), 1809-1827.

\bibitem{Liu} Y. Liu, {\it Hearts of twin cotorsion pairs on exact categories},
J. Algebra {\bf 394} (2013),  245--284.

\bibitem{LN17} Y. Liu and H. Nakaoka, \emph{Hearts of twin cotorsion pairs on extriangulated categories},
J. Algebra {\bf 528} (2019), 96--149.

\bibitem{MacLane1971Categories}
{S.~Mac~Lane}, { Categories for the Working Mathematician},
Grad. Texts in Math. {\bf 5}, Springer-Verlag, New York-Berlin, 1971.

\bibitem{Mi} D. Mili\v{c}i\'{c}, Lectures on Derived Categories, Preprint available at:
\url{https://www.math.utah.edu/~milicic/Eprints/dercat.pdf}.

\bibitem{Na} H. Nakaoka, {\it General heart construction on a triangulated category (I):
Unifying t-structures and cluster tilting subcategories},
Appl. Categor. Struct. {\bf 19} (2011), 879--899.

\bibitem{Na13} H. Nakaoka, {\it General heart construction for twin torsion pairs on triangulated categories},
J. Algebra {\bf 374} (2013), 195--215.

\bibitem{NP} H. Nakaoka and Y. Palu, \emph{Mutation via Hovey twin cotorsion pairs and model structures
in extriangulated categories}, Cahiers
de Topologie et Geometrie Differentielle Categoriques, Volume LX-2 (2019) 117--193.

\bibitem{Ri12}
{C.~M. Ringel}, \emph{Morphisms determined by objects: the case of modules
  over {A}rtin algebras}, Illinois J. Math. {\bf 56} (2012), 981--1000.

\bibitem{Ri13}
{C.~M. Ringel}, \emph{The {A}uslander
bijections: how morphisms are determined by modules}, Bull. Math. Sci. {\bf 3} (2013), 409--484.

\bibitem{ZH} T. Zhao and Z. Huang,   \emph{Phantom ideals and cotorsion pairs in extriangulated categories},
Taiwanese J. Math. {\bf 23} (2019), 29--61.

\bibitem{ZZ} P. Zhou and B. Zhu, {\it Triangulated quotient categories revisited}, J. Algebra {\bf 502} (2018), 196--232.
\end{thebibliography}
\end{document}